\documentclass[a4paper,12pt]{article}

\usepackage{amsfonts}
\usepackage{amscd,color}
\usepackage{amsmath,amsfonts,amssymb,amscd}
\usepackage{indentfirst,graphicx,epsfig}
\usepackage{graphicx}
\input{epsf}
\usepackage{graphicx}
\usepackage{epstopdf}
\usepackage{caption}

\setlength{\textwidth}{152mm}
 \setlength{\textheight}{230mm}
\setlength{\headheight}{2cm}
 \setlength{\topmargin}{0pt}
\setlength{\headsep}{0pt}
\setlength{\oddsidemargin}{0pt}
\setlength{\evensidemargin}{0pt}

\parskip=3pt

\voffset -25mm \rm

\newtheorem{thm}{Theorem}[section]

\newtheorem{lem}[thm]{Lemma}

\newtheorem{cor}[thm]{Corollary}
\newtheorem{pro}[thm]{Proposition}
\newenvironment {proof} {\noindent{\em Proof.}}{\hspace*{\fill}$\Box$\par\vspace{4mm}}
\newcommand{\ml}{l\kern-0.55mm\char39\kern-0.3mm}

\baselineskip=20pt

\title{\textbf{More on limited packings in graphs\footnote{Supported by NSFC No.11531011.}}}
\author{{\small Xuqing Bai, Hong Chang, Xueliang Li } \\
{\small  Center for Combinatorics and LPMC}\\
{\small Nankai University, Tianjin 300071, P.R. China}\\
{\small Email: baixuqing0@163.com, changh@mail.nankai.edu.cn, lxl@nankai.edu.cn}\\
}
\date{}
\begin{document}
\maketitle
\begin{abstract}
A set $B$ of vertices in a graph $G$ is called a \emph{$k$-limited packing}
if for each vertex $v$ of $G$, its closed neighbourhood has at most $k$
vertices in $B$. The \emph{$k$-limited packing number} of a graph $G$,
denoted by $L_k(G)$, is the largest number of vertices in a $k$-limited packing
in $G$. The concept of the $k$-limited packing of a graph was introduced by
Gallant et al., which is a generalization of the well-known
packing of a graph. In this paper, we present some tight bounds for
the $k$-limited packing number of a graph in terms of its order, diameter,
girth, and maximum degree, respectively. As a result, we obtain the tight 
Nordhaus-Gaddum-type result of this parameter for general $k$. At last,
we investigate the relationship among the open packing number, the packing
number and $2$-limited packing number of trees.

\noindent\textbf{Keywords:} $k$-limited packing, opening packing, Nordhaus-Gaddum-type result

\noindent\textbf{AMS subject classification 2010:} 05C69, 05C70
\end{abstract}

\section{Introduction}

All graphs in this paper are undirected, simple and nontrivial.
We follow \cite{BM} for graph theoretical notation and terminology
not described here. Let $G$ be a graph, we use $V(G), E(G), diam(G),
\Delta(G)$ and $\delta(G)$ to denote the vertex set, edge set,
diameter, maximum degree, and minimum degree of $G$, respectively.
Take a vertex $v\in V(G)$, the \emph{open neighbourhood} of $v$ is
defined as the set of all vertices adjacent to $v$ in $G$, the set
$N[v] = \{v\}\cup N(v)$ is called \emph {the closed neighbourhood}
of $v$ in $G$.

A set $D$ of vertices in a graph $G$ is called a \emph{dominating set}
if each vertex in $V(G)\setminus D$ has at least one neighbour in $D$.
The \emph{domination number} $\gamma(G)$ of a graph $G$ is the minimum
cardinality of a dominating set in $G$. The theory of dominating sets,
introduced formally by Ore \cite{O} and Berge \cite{Berge}, has been
the subject of many recent papers due to its practical and theoretical
interest. For more information on domination topics we refer to the books
\cite{HHS, HHS1}. A domination set $D$ of a graph $G$ is called a
\emph{total dominating set} if $G[D]$ has no isolated vertex, and
the minimum cardinality of a total dominating set in $G$ is called
the \emph{total domination number} of $G$, denoted by $\gamma_t(G)$.
Total domination in graphs was introduced by Cockayne, Dawes,
and Hedetniemi \cite{CDH}, and has been well studied (see, for example, \cite{FH,FHMP,HY,Y}).

On the other side, the \emph{open packing} of a graph $G$ is a set
$S$ of vertices in $G$ such that for each vertex $v$ of $G$,
$|N(v)\cap S|\leq 1$. The \emph{open packing number} of a graph $G$,
denoted by $\rho^{0}(G)$, is the maximum cardinality among all open
packings in $G$. The open packing of a graph has been studied in \cite{H,HSE}.

The well-known \emph{packing} (\emph{$2$-packing}) of a graph $G$ is
a set $B$ of vertices in $G$ such that $|N[v]\cap B|\leq 1$ for each
vertex $v$ of $G$. The \emph{packing number} $\rho(G)$ of a graph $G$
is the maximum cardinality of a packing in $G$. The packing of a graph
has been well studied in the literature \cite{B,C,MM,TV}. Dominating
sets and packings of graphs are two good models for many utility location
problems in operations research. But the corresponding problems have a
very different nature: the former is a minimization problem (dominating sets)
to satisfy some reliability requirements, the latter is a maximization problem
not to break some (security) constraints. Consider the following scenarios:

Network security: A set of sensors is to be deployed to covertly monitor
a facility. Too many sensors close to any given location in the facility
can be detected. Where should the sensors be placed so that the total
number of sensors deployed is maximized?

Market Saturation: A fast food franchise is moving into a new city. Market
analysis shows that each outlet draws customers from both its immediate
city block and from nearby city blocks. However it is also known that a
given city block cannot support too many outlets nearby. Where should the
outlets be placed?

Codes: Information is to be transmitted between two interested parties.
This data is first represented by bit strings (codewords) of length $n$.
It is desirable to be able to use as many of these $2^n$ strings as possible. However, if a single bit of a codeword is altered during transmission,
we should still be able to recover the piece of data correctly by employing a ``nearest neighbour" decoding algorithm. How many code words can be used as a
function of $n$?

A graph model of these scenarios might maximize the size of a vertex subset
subject to the constraint that no vertex in the graph is near too many of the selected vertices.

Motivated by the packing of graphs, Gallant et al. relaxed the constraints
and introduced the concept of the \emph{$k$-limited packing} in graphs in \cite{GGHR}.  A set $B$ of vertices in a graph $G$ is called a
\emph{$k$-limited packing} if for each vertex $v$ of $G$,
$|N[v]\cap B|\leq k$. The \emph{$k$-limited packing number} of a graph $G$,
denoted by $L_k(G)$, is the largest number of vertices in a $k$-limited
packing set in $G$. It is clear that $L_1(G) = \rho(G)$. The problem of
finding a $1$-limited packing of maximum size for a graph is shown to be
NP-complete in \cite{HS0}. In \cite{DLN}, it is shown that the problem
of finding a maximum size $k$-limited packing is NP-complete even in split
or bipartite graphs. For more results on $k$-limited packings of graphs,
we refer to \cite{BBG,GZ,GGHR,LN}.

The remainder of this paper will be organized as follows. In Section $2$,
we give the technical preliminaries, including notations and relevant
known results on open packings and $k$-limited packings of graphs.
In Section $3$, we present some tight bounds for the $k$-limited packing
number of a graph in terms of its order, diameter, girth, and maximum
degree, respectively. Based on them, we obtain the tight Nordhaus-Gaddum-type
result of this parameter for general $k$. In Section $4$, we focus on
the $2$-limited packing number of graphs, including trees and graphs
with diameter two. And we get the better upper bound of the $2$-limited
packing number of graphs with large diameter. In Section $5$, we investigate
the relationship among the open packing number, the $1$-limited packing number
and the $2$-limited packing number of trees.

\section{Preliminaries}

The notation we use is mostly standard. For $B \subseteq V(G)$,
let $\overline{B}=V(G)\backslash B$, and $G[B]$ denote the subgraph
of $G$ induced by $B$. Given $t$ graphs $G_1,\ldots, G_t$, the
\emph{union} of $G_1,\ldots,G_t$, denoted by $G_1\cup \cdots \cup G_t$,
is the graph with vertex set $V(G_1)\cup \cdots \cup V(G_t)$ and edge set
$E(G_1)\cup \cdots \cup E(G_t)$. In particular, let $tG$ denote
the vertex-disjoint \emph{union} of $G_1,\ldots G_t$ for $G_1= \cdots =G_t=G$.

We next state some relevant known results on $k$-limited packings
of graphs, which will be needed later.

\begin{lem}{\upshape\cite{HS}}\label{lem15}
Let $G$ be a graph of order at least $3$. Then $\rho^{0}(G) = 1$
if and only if $diam(G)\leq 2$ and every edge of $G$ lies on a triangle.
\end{lem}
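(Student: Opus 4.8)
The plan is to reduce the statement to a single clean combinatorial condition and then verify the equivalence with that condition. The key reduction is the following observation: \emph{for a graph $G$ with at least one vertex, $\rho^{0}(G)=1$ if and only if every pair of distinct vertices of $G$ has a common neighbour.} To see this, first note that any singleton $\{v\}$ is an open packing (since $|N(u)\cap\{v\}|\leq 1$ trivially), so $\rho^{0}(G)\geq 1$ always; and since every subset of an open packing is again an open packing, $\rho^{0}(G)\geq 2$ if and only if some $2$-element set $\{x,y\}$ is an open packing. Now $\{x,y\}$ with $x\neq y$ fails to be an open packing precisely when some vertex $u$ satisfies $\{x,y\}\subseteq N(u)$; since $x\notin N(x)$ and $y\notin N(y)$ in a simple graph, such a $u$ must lie outside $\{x,y\}$ and be adjacent to both, i.e. $x$ and $y$ have a common neighbour. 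Hence $\rho^{0}(G)\geq 2$ iff some pair of distinct vertices has no common neighbour, which gives the reduction by contraposition.

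Given the reduction, the whole lemma becomes: \emph{every pair of distinct vertices of $G$ has a common neighbour} $\iff$ \emph{$diam(G)\leq 2$ and every edge of $G$ lies on a triangle.} For the forward direction, assume every two distinct vertices share a common neighbour. Then $G$ is connected and any two vertices are at distance at most $2$, so $diam(G)\leq 2$; and for an edge $xy$, a common neighbour $z$ of $x$ and $y$ is distinct from both and adjacent to both, so $xyz$ is a triangle through $xy$. For the backward direction, assume $diam(G)\leq 2$ and every edge lies on a triangle, and take distinct vertices $x,y$: if $xy$ is an edge, a triangle containing $xy$ supplies a common neighbour; if $xy$ is not an edge, then $d(x,y)=2$ by the diameter hypothesis, and an internal vertex of a shortest $x$--$y$ path is a common neighbour.

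I do not expect a serious obstacle here; the content is entirely in spotting the common-neighbour reformulation, after which both implications are short. The only point requiring a little care is the bookkeeping in the reduction step — in particular, checking that a vertex witnessing the failure of $\{x,y\}$ to be an open packing cannot be $x$ or $y$ itself (this uses loop-freeness), and noting the hereditary property of open packings so that it suffices to consider $2$-element sets. I would state the common-neighbour reformulation explicitly (perhaps as a displayed claim inside the proof) before deploying it, since it also clarifies why the hypothesis ``order at least $3$'' is essentially cosmetic: the equivalence holds verbatim for any nontrivial graph, and order at least $3$ is only needed to make the triangle/diameter conditions non-vacuous in the intended applications.
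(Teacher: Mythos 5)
Your argument is correct and complete. Note that the paper offers no proof of this statement at all --- it is quoted as a known result from \cite{HS} --- so there is no internal proof to compare against; judged on its own, your common-neighbour reformulation (namely, $\rho^{0}(G)=1$ if and only if every two distinct vertices of $G$ have a common neighbour) is exactly the right pivot, and both directions of the resulting equivalence with ``$diam(G)\leq 2$ and every edge lies on a triangle'' are verified correctly, including the small but necessary checks that a vertex $u$ with $\{x,y\}\subseteq N(u)$ cannot be $x$ or $y$ itself (loop-freeness) and that open packings are closed under taking subsets, so that only $2$-element sets need to be tested. Your side remark about the hypothesis ``order at least $3$'' is also sound: for $G=K_1$ both sides of the equivalence hold, while for either graph on two vertices one has $\rho^{0}(G)=2$ and the right-hand condition fails (the edge of $K_2$ lies on no triangle; $2K_1$ is disconnected), so the equivalence is not actually threatened by small orders.
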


\begin{lem}{\upshape\cite{HS}}\label{lem17}
If $G$ is a graph of diameter $2$, then $\rho^{0}(G)\leq 2$.
\end{lem}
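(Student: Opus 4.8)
The plan is to argue by contradiction: assume $\rho^{0}(G)\geq 3$ and fix an open packing $S$ together with three pairwise distinct vertices $x,y,z\in S$. The one observation I would isolate and use as the engine of the proof is the following: in any open packing $S$, no two distinct vertices of $S$ share a common neighbour. Indeed, if a vertex $w$ were adjacent to two distinct vertices $u,v\in S$, then $\{u,v\}\subseteq N(w)\cap S$, so $|N(w)\cap S|\geq 2$, contradicting the defining property $|N(v)\cap S|\leq 1$ of an open packing. (Note $w\notin\{u,v\}$ automatically, since $G$ is simple and loopless.)

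With this in hand the argument is short. Since $diam(G)\leq 2$, any two vertices of $G$ are at distance $1$ or $2$; in particular $x,y$ are either adjacent or joined by a common neighbour. The observation above rules out a common neighbour, so $x\sim y$. The same reasoning applied to the pairs $\{x,z\}$ and $\{y,z\}$ gives $x\sim z$ and $y\sim z$. But then $y$ and $z$ are two distinct vertices of $S$ lying in $N(x)$, so $|N(x)\cap S|\geq 2$, again contradicting that $S$ is an open packing. Hence no open packing can contain three vertices, i.e. $\rho^{0}(G)\leq 2$.

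There is essentially no serious obstacle here; the only thing to be mildly careful about is the bookkeeping of ``two distinct vertices,'' ensuring that the common neighbour $w$ is genuinely distinct from the two packing vertices it is adjacent to (which holds because $G$ is simple), and noting that $diam(G)=2$ forces $G$ to be connected and to have at least three vertices, so the statement is not vacuous. Everything else is the routine case split on distances $1$ versus $2$, which collapses immediately once the ``no shared neighbour'' observation is available.
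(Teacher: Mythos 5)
Your proof is correct: the ``no two packing vertices share a common neighbour'' observation plus the distance-$1$-or-$2$ dichotomy forces any three vertices of an open packing to be pairwise adjacent, which immediately violates $|N(x)\cap S|\leq 1$. Note that the paper does not prove this lemma at all --- it is quoted from the cited reference \cite{HS} --- so there is no in-paper argument to compare against; your short contradiction argument is the natural, standard proof and would serve as a valid self-contained justification.
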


\noindent\textbf{Remark 1.}
It is clear that graphs with diameter $1$, which are exactly complete
graphs, have opening packing number at most $2$. Thus, if $G$ is a graph of
diameter at most $2$, then $\rho^{0}(G)\leq 2$.

\begin{lem}{\upshape\cite{R}}\label{lem33}
If $T$ is any tree of order at least $2$, then $\rho^{0}(T)=\gamma_t(T)$.
\end{lem}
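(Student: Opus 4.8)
The plan is to prove the two inequalities $\rho^{0}(T)\le\gamma_t(T)$ and $\gamma_t(T)\le\rho^{0}(T)$ separately. The first holds for every graph without isolated vertices. Let $S$ be a maximum open packing and $D$ a minimum total dominating set of $T$ (such a $D$ exists since $|V(T)|\ge 2$). Since $D$ is a total dominating set, every vertex of $T$ has a neighbour in $D$, so we may pick for each $v\in S$ a neighbour $f(v)\in D$. If $f(v)=f(v')=w$ for distinct $v,v'\in S$, then $w$ has the two neighbours $v,v'$ in $S$, contradicting that $S$ is an open packing; hence $f$ is injective and $\rho^{0}(T)=|S|\le|D|=\gamma_t(T)$.

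For the reverse inequality I would use induction on $n=|V(T)|$. If $n\le 3$, or more generally if $diam(T)\le 2$, then $T$ is a star $K_{1,n-1}$ and $\gamma_t(T)=\rho^{0}(T)=2$. So assume $diam(T)\ge 3$ and fix a diametral path $v_0v_1\cdots v_d$ with $d\ge 3$; then $v_0$ is a leaf with support $v_1$, every neighbour of $v_1$ other than $v_2$ is a leaf, and every vertex of the branch hanging at $v_2$ away from $v_d$ is within distance $2$ of $v_2$ (otherwise a path longer than $v_0v_1\cdots v_d$ would exist). The method is to delete a suitably chosen pendant piece near $v_1$, obtaining a strictly smaller tree $T'$, apply the induction hypothesis to get $\gamma_t(T')=\rho^{0}(T')$, and then check that $\gamma_t$ and $\rho^{0}$ change by the same amount on passing from $T'$ back to $T$. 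If $v_1$ has at least two leaf-neighbours, delete all of them but $v_0$: one shows directly that both parameters are unchanged, using that a minimum total dominating set must contain the support $v_1$ but can be chosen to avoid all its leaf-neighbours, and that an open packing contains at most one leaf-neighbour of $v_1$, which may be taken to be $v_0$. If instead $\deg_T(v_1)=2$, delete the leaf $v_0$ and argue in the same style, exploiting the rigidity of support vertices in minimum total dominating sets and the fact---valid in any tree, since trees have no triangles---that a vertex set is an open packing exactly when no two of its members lie at distance $2$.

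The main obstacle is precisely the bookkeeping in the degree-two case: the situation splits into several sub-cases according to $\deg_T(v_2)$ and to how many pendant paths and pendant leaves are attached at $v_2$, and the deleted piece must be chosen so that $\gamma_t$ and $\rho^{0}$ drop together. In each sub-case one has $\rho^{0}(T')\le\rho^{0}(T)\le\rho^{0}(T')+1$ and $\gamma_t(T')\le\gamma_t(T)\le\gamma_t(T')+1$, so, with $\rho^{0}(T)\le\gamma_t(T)$ from the first part, the whole difficulty reduces to excluding the mismatch $\rho^{0}(T)=\rho^{0}(T')$ together with $\gamma_t(T)=\gamma_t(T')+1$. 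I would eliminate this by pushing all forced structure as far as possible before deleting (supports are in every minimum total dominating set, their leaf-neighbours can be removed from some minimum one, and open packings of trees contain no distance-$2$ pair), enlarging the deleted pendant piece if that proves necessary. As a clean alternative that sidesteps the case analysis altogether, one may observe that the adjacency matrix of a tree is balanced, so that the linear-programming relaxations of the covering problem defining $\gamma_t(T)$ and of the packing problem defining $\rho^{0}(T)$ are both integral; these relaxations are mutually dual because the adjacency matrix is symmetric, whence $\gamma_t(T)=\rho^{0}(T)$ follows from linear-programming duality.
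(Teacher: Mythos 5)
First, a point of reference: the paper does not prove this lemma at all; it is quoted from Rall \cite{R} as a known result, so your argument has to stand entirely on its own. The first half of it does. The injection from a maximum open packing $S$ into a minimum total dominating set $D$ (send each $v\in S$ to a neighbour in $D$; no $w\in D$ can receive two members of $S$ because $|N(w)\cap S|\leq 1$) is complete and gives $\rho^{0}(T)\leq\gamma_t(T)$ for every graph without isolated vertices.

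Your inductive argument for the reverse inequality, however, is not a proof as written: you yourself isolate the hard configuration (the case $\deg_T(v_1)=2$, with sub-cases according to what hangs at $v_2$) and then only promise to handle it by ``enlarging the deleted pendant piece if that proves necessary.'' Excluding the mismatch $\rho^{0}(T)=\rho^{0}(T')$ together with $\gamma_t(T)=\gamma_t(T')+1$ is exactly where all the work lies, and it is left undone, so that route is a genuine gap. Your linear-programming alternative, by contrast, can be made to work, but its whole content is the sentence ``the adjacency matrix of a tree is balanced,'' which you assert without proof. It is true, and here is the justification you need: the bipartite graph representing $A=A(T)$ (one vertex per row, one per column, an edge for each $1$-entry) is the bipartite double cover of $T$, which for a bipartite graph is just two disjoint copies of $T$, hence a forest; a square submatrix of $A$ with all row and column sums equal to $2$ would give a $2$-regular subgraph of this forest, i.e.\ a cycle, which is impossible. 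So $A$ is balanced, and by the Fulkerson--Hoffman--Oppenheim theorem both the covering polyhedron $\{x\geq 0: Ax\geq \mathbf{1}\}$ and the packing polytope $\{y\geq 0: Ay\leq \mathbf{1}\}$ are integral; since $A$ is symmetric the two programs are linear-programming duals. Two small remarks finish the translation into graph parameters: an integral optimum of the covering LP is automatically a $\{0,1\}$-vector (an entry at least $2$ could be lowered to $1$, preserving feasibility and decreasing the objective), and an integral feasible point of the packing LP is a $\{0,1\}$-vector because every vertex of a tree of order at least $2$ has a neighbour; the $\{0,1\}$-solutions of the two systems are precisely total dominating sets and open packings. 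With the balancedness claim proved and these two observations added, the duality route is a correct and pleasantly short proof; I would keep it and either drop the induction or actually carry out its case analysis.
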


\begin{lem}{\upshape\cite{MS}}\label{lem16}
For any graph $G$, $L_1(G) = 1$ if and only if $diam(G)\leq2$.
\end{lem}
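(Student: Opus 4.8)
The plan is to translate the condition $L_1(G) = 1$ into a statement about distances between pairs of vertices. First I would record the trivial observation that any single vertex is a $1$-limited packing, so $L_1(G) \ge 1$ for every (nontrivial) graph $G$; hence $L_1(G) = 1$ is equivalent to the nonexistence of a $1$-limited packing of size $2$.

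The key step is to characterize exactly when a two-element set $\{u,w\}$ of distinct vertices fails to be a $1$-limited packing. By definition this occurs iff some vertex $v$ has $|N[v] \cap \{u,w\}| \ge 2$, i.e.\ $u, w \in N[v]$. Since $N[v] = \{v\} \cup N(v)$, I would split into cases: if $v = u$ this forces $w \in N(u)$; if $v = w$ this forces $u \in N(w)$; and if $v \notin \{u,w\}$ then $v$ is a common neighbour of $u$ and $w$. Conversely each of these situations produces a vertex $v$ witnessing the failure. Therefore $\{u,w\}$ is not a $1$-limited packing iff $uw \in E(G)$ or $u$ and $w$ have a common neighbour, which is precisely the condition $d_G(u,w) \le 2$; equivalently, $\{u,w\}$ \emph{is} a $1$-limited packing iff $d_G(u,w) \ge 3$, under the usual convention $d_G(u,w) = \infty$ when $u$ and $w$ lie in different components.

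Combining these two steps gives $L_1(G) \ge 2$ iff there exist two distinct vertices at distance at least $3$, which is exactly the statement $diam(G) \ge 3$ (the disconnected case being absorbed by $diam(G) = \infty$). Negating both sides yields $L_1(G) = 1$ iff $diam(G) \le 2$, which is the claim.

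I do not expect a genuine obstacle here; the argument is essentially a careful unpacking of the definitions. The only points requiring attention are the bookkeeping in the case analysis for $N[v]$ — in particular not overlooking the cases $v = u$ and $v = w$, which are what encode adjacency — and handling disconnected graphs via the convention $diam(G) = \infty$ so that the equivalence holds uniformly for all graphs of order at least $2$.
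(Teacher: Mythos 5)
Your argument is correct and complete: since any single vertex is a $1$-limited packing, $L_1(G)=1$ is equivalent to the nonexistence of a $2$-element $1$-limited packing, and your case analysis of $N[v]$ correctly shows that $\{u,w\}$ is a $1$-limited packing precisely when $d_G(u,w)\geq 3$, which gives the equivalence with $diam(G)\leq 2$ (including the disconnected case). Note that the paper itself does not prove this lemma but cites it from \cite{MS}; your definitional unpacking is exactly the standard argument one would expect there, so there is nothing to add.
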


\begin{lem}{\upshape\cite{BBG}}\label{lem31}
For any graph $G$ of order $n$,
$L_1(G)\geq \frac{n}{\Delta(G)^2+1}.$
\end{lem}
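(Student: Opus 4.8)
The plan is a greedy construction of a maximal packing combined with a ball-counting estimate. Recall that $L_1(G)=\rho(G)$ and that a set $B\subseteq V(G)$ is a packing if and only if the closed neighbourhoods $N[b]$, $b\in B$, are pairwise disjoint, equivalently the vertices of $B$ are pairwise at distance at least $3$ in $G$. So it suffices to exhibit one packing of size at least $n/(\Delta(G)^2+1)$, since $\rho(G)$ is the maximum size of a packing.

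First I would let $B$ be a packing of $G$ that is maximal with respect to inclusion. The key observation is that maximality forces every vertex of $G$ to lie within distance $2$ of some vertex of $B$: if there were a vertex $w\notin B$ with $d(w,b)\geq 3$ for all $b\in B$, then $N[w]$ would be disjoint from every $N[b]$, so $B\cup\{w\}$ would still be a packing, contradicting maximality. Hence, writing $N_{\le 2}[b]$ for the set of vertices at distance at most $2$ from $b$, we get the covering $V(G)=\bigcup_{b\in B}N_{\le 2}[b]$.

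Next I would bound $|N_{\le 2}[b]|$ by a routine degree count: each $b$ has at most $\Delta(G)$ vertices at distance $1$, and each such vertex contributes at most $\Delta(G)-1$ further vertices at distance $2$, so $|N_{\le 2}[b]|\leq 1+\Delta(G)+\Delta(G)(\Delta(G)-1)=\Delta(G)^2+1$. Combining with the covering above, $n=|V(G)|\leq\sum_{b\in B}|N_{\le 2}[b]|\leq |B|\,(\Delta(G)^2+1)$, and therefore $L_1(G)=\rho(G)\geq |B|\geq n/(\Delta(G)^2+1)$, as desired.

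There is essentially no serious obstacle here; the two places needing a little care are the translation of "packing" into the clean distance-$\geq 3$ condition (so that maximality becomes a statement about distance-$\leq 2$ domination), and making sure the size estimate for a ball of radius $2$ is not off by a factor—in particular counting the distance-$2$ vertices as additional neighbours of the at most $\Delta(G)$ distance-$1$ vertices, each supplying at most $\Delta(G)-1$ new vertices. If one also wants to remark on sharpness, it is enough to note that equality is approached by suitable $\Delta(G)$-regular graphs of large girth, but the statement as given only asserts the inequality.
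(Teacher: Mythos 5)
Your argument is correct: a maximal packing is a distance-$2$ dominating set, and the radius-$2$ ball bound $1+\Delta(G)+\Delta(G)(\Delta(G)-1)=\Delta(G)^2+1$ gives exactly the stated inequality. The paper itself quotes this lemma from \cite{BBG} without proof, and your greedy covering argument is the standard one used there, so there is nothing to object to.
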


\begin{lem}{\upshape\cite{MM}}\label{lem34}
For any tree T, $L_1(T)=\gamma(T).$
\end{lem}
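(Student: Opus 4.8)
\noindent\textit{A proof proposal.}
Since $L_1(T)=\rho(T)$ for any graph, the assertion is $\rho(T)=\gamma(T)$ for every tree $T$, and one inequality is free in an arbitrary graph $G$: taking a maximum packing $B$ and a minimum dominating set $D$ and picking, for each $b\in B$, some $d_b\in D\cap N[b]$ (nonempty because $D$ dominates $b$), the map $b\mapsto d_b$ is injective --- $d_b=d_{b'}$ with $b\ne b'$ would force $\{b,b'\}\subseteq N[d_b]$, contradicting $|N[d_b]\cap B|\le1$ --- so $\rho(G)\le\gamma(G)$, hence $\rho(T)\le\gamma(T)$. The content of the lemma is the reverse inequality, and I would obtain it by proving, by induction on $|V(T)|$, that every tree has a packing $B$ and a dominating set $D$ with $|B|=|D|$; this gives $\gamma(T)\le|D|=|B|\le\rho(T)$, which together with $\rho(T)\le\gamma(T)$ yields $\gamma(T)=\rho(T)$.

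The base cases are the trees of order at most $2$ and, more generally, the stars, where $\rho=\gamma=1$ (the first equality e.g.\ by Lemma~\ref{lem16}). For the inductive step, with $T$ not a star, root it at a leaf and take a leaf $v$ of maximum depth, its parent $u$, and the parent $w$ of $u$ (which exists since $T$ is not a star). A depth comparison shows that every subtree hanging below a child of $w$ has height at most $1$, so every child of $w$ is either a leaf or a support vertex all of whose children are leaves. Let $c_1,\dots,c_s$ ($s\ge1$, one of them $u$) be the support-vertex children of $w$, with subtrees $T_{c_i}$, and fix one leaf $p_i$ below each $c_i$. The plan is to delete a pendant chunk $X$, apply the induction hypothesis to the tree $T-X$ to get $B',D'$ with $|B'|=|D'|$, and then put $D=D'\cup\{c_1,\dots,c_s\}$ and $B=B'\cup\{p_1,\dots,p_s\}$; then $D$ dominates $T$ (each $c_i$ dominates $T_{c_i}$), $|D|=|B'|+s=|B|$, and it only remains to see $B$ is a packing. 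The choice of $X$ depends on whether $w$ has a leaf-child. If it does, take $X=T_{c_1}\cup\dots\cup T_{c_s}$, so that $w$ survives in $T-X$ as a non-leaf; should $B'$ contain $w$, swap $w$ for a leaf-child $\ell$ of $w$ --- legitimate since $d(\ell,b)=1+d(w,b)\ge4$ for every other $b\in B'$ --- so that we may assume $w\notin B'$. If $w$ has no leaf-child, then all its children are among the $c_i$, and we take $X=\{w\}\cup T_{c_1}\cup\dots\cup T_{c_s}$, the whole subtree rooted at $w$.

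In either case the packing check reduces to the $p_i$ being pairwise at distance $4$ and each $p_i$ being far from $B'$: the path from $p_i$ into $T-X$ runs $p_i\to c_i\to w\to\cdots$, and since $c_i$'s only non-leaf neighbour is $w$, it has length at least $3$ to every vertex of $T-X$ other than $w$ --- and $w\notin B'$ in the first case, while $w\notin T-X$ in the second. The genuinely delicate point, and the main obstacle, is exactly this case split: with the crude choice $X=T_u$ alone one can over-count the domination number when $w$ carries several pendant branches, and one can create a distance-$2$ clash between $v$ and a vertex forced into \emph{every} maximum packing of $T-T_u$ (already when $T-T_u$ is a path on four vertices with $w$ at an end); the remedy is to absorb all the support-subtrees below $w$ at once and, when $w$ has a pendant leaf, to massage $B'$ so as to keep $w$ out of it. Everything else --- the depth comparison, the domination count, the base cases --- is routine. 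This equality is classical, due to Meir and Moon~\cite{MM}.
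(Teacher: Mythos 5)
Your proposal is correct, but note that the paper itself contains no proof of this statement: Lemma~\ref{lem34} is quoted verbatim from Meir and Moon \cite{MM}, so there is nothing internal to compare against and your argument is in effect a self-contained reconstruction of that classical result. The two halves of your argument are both sound. The inequality $\rho(G)\le\gamma(G)$ via the injection $b\mapsto d_b$ is the standard one and is stated correctly. For the converse you prove the right strengthened statement --- every tree admits \emph{some} packing and \emph{some} dominating set of equal size, with no extremality required --- which is exactly what makes the induction close, since $\gamma(T)\le |D|=|B|\le\rho(T)$ then combines with the first inequality. The structural setup (deepest leaf $v$, parent $u$, grandparent $w$, all children of $w$ being leaves or support vertices with only leaf children) is justified by the depth comparison, and your case split correctly handles the two genuine pitfalls: in the case where $w$ keeps a pendant leaf in $T-X$, the swap of $w$ for a leaf-child $\ell$ is legitimate because $\ell\notin B'$ (it is adjacent to $w\in B'$) and $d(\ell,b)=1+d(w,b)\ge 4$ for the remaining $b\in B'$, after which every $p_i$ is at distance $\ge 3$ from $B'$ and the $p_i$ are pairwise at distance $4$; in the other case $w$ is deleted outright, so $d(p_i,b')\ge 3$ automatically. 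Domination and the cardinality count $|D|=|D'|+s=|B'|+s=|B|$ are as you say, and since deleting pendant subtrees neither disconnects $T-X$ nor changes distances among its vertices, $B'$ remains a packing in $T$. So the proof is complete as sketched; the only thing to add in a written-up version is the (easy) observation that $w$ cannot be the root when $T$ is not a star, so the grandparent and, in the second case, a nonempty $T-X$ indeed exist.
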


\begin{lem}{\upshape\cite{MS}}\label{lem7}
For any connected graph $G$ and integer $k\in \{1,2\}$,
$$ L_k(G)\geq \lceil \frac{k+kdiam(G)}{3}\rceil .$$
\end{lem}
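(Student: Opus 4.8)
The plan is to exhibit an explicit $k$-limited packing of the required size, built along a diametral geodesic. Let $d=diam(G)$, and, using that $G$ is connected, fix vertices $u,w$ with $d_G(u,w)=d$ together with a shortest $u$--$w$ path $P\colon v_0,v_1,\ldots,v_d$. The crucial property of $P$ is that being a geodesic forces $d_G(v_i,v_j)=|i-j|$ for all $0\le i,j\le d$, since every subpath of a shortest path is again shortest.

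Next I would take $B=\{v_i : 0\le i\le d,\ i\bmod 3\in\{0,1,\ldots,k-1\}\}$; for $k=1$ this is every third vertex of $P$, and for $k=2$ it consists of the consecutive pairs $v_{3t},v_{3t+1}$ while skipping each $v_{3t+2}$. I claim $B$ is a $k$-limited packing. Indeed, fix $x\in V(G)$ and suppose $v_a,v_b\in N[x]\cap B$ with $a\le b$. Then $|a-b|=d_G(v_a,v_b)\le d_G(v_a,x)+d_G(x,v_b)\le 2$, so all indices of $B$-vertices contained in $N[x]$ lie in a block $\{c,c+1,c+2\}$ of three consecutive integers. Since three consecutive integers realize all residues modulo $3$ exactly once and $k\le 2<3$, at most $k$ of them have residue in $\{0,\ldots,k-1\}$; hence $|N[x]\cap B|\le k$, as required.

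It then remains to count $|B|$. Writing $d+1=3q+r$ with $r\in\{0,1,2\}$, the residue class $0$ contributes $q+[r\ge 1]$ indices in $\{0,\ldots,d\}$, the class $1$ contributes $q+[r\ge 2]$, and the class $2$ contributes $q$; a short check over the three values of $r$ shows that the number of admissible indices equals $\lceil k(d+1)/3\rceil$ for $k\in\{1,2\}$. Therefore $L_k(G)\ge |B|=\lceil (k+k\,diam(G))/3\rceil$. (For $k=1$ this is the well-known bound $\rho(G)\ge\lceil (diam(G)+1)/3\rceil$.)

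The only genuinely delicate points are the two facts invoked in the packing check: that distances along a geodesic equal the index differences (which yields $|a-b|\le 2$), and that any three consecutive integers contain exactly $k$ residues from $\{0,\ldots,k-1\}$ when $k\le 2$ — this is precisely where the hypothesis $k\in\{1,2\}$ is used, since for $k\ge 3$ such a block could contain all selected residues and the construction would break down. Everything else is routine casework on $d\bmod 3$.
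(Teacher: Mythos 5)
Your proof is correct. Note that the paper does not actually prove this lemma --- it is quoted from \cite{MS} as a known result --- so there is no in-paper argument to compare against; your construction (fix a diametral geodesic $v_0,\ldots,v_d$, keep the vertices whose index is congruent to $0,\ldots,k-1$ modulo $3$, and observe that the indices of selected vertices lying in any closed neighbourhood span at most three consecutive integers because distances along a geodesic equal index differences) is the standard argument for this bound, and it is essentially the same device the authors themselves use in the proof of Theorem \ref{th5}, where the vertex set of a diametral path is shown to be a $3$-limited packing via the shortcut argument. Your counting of $|B|$ by the residue of $d+1$ modulo $3$ checks out and gives exactly $\lceil k(d+1)/3\rceil$ for $k\in\{1,2\}$. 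One small side remark: your closing claim that the construction ``breaks down'' for $k\ge 3$ is not quite accurate --- for $k\ge 3$ the selected set is all of $V(P)$ and the same window argument still yields a valid $k$-limited packing of size $d+1$ (precisely the bound $L_3(G)\ge diam(G)+1$ exploited in Theorem \ref{th5}); what fails is only that the size $\lceil k(d+1)/3\rceil$ is no longer attained. This does not affect the correctness of your proof of the stated lemma.
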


\begin{lem}{\upshape\cite{S}} \label{lem2}
Let $G$ be a graph of order $n$. Then $L_2(G)+L_2(\overline{G})\leq n+2$,
and this bound is tight.
\end{lem}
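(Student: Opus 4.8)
The plan is to analyze maximum $2$-limited packings in $G$ and in $\overline{G}$ simultaneously, exploiting the simple structural fact that a $2$-limited packing induces a subgraph of maximum degree at most $1$. Let $B$ be a $2$-limited packing of $G$ with $|B| = L_2(G)$, and let $B'$ be a $2$-limited packing of $\overline{G}$ with $|B'| = L_2(\overline{G})$. The first step is to observe that if $v \in B$, then $|N_G[v]\cap B|\le 2$ together with $v\in N_G[v]\cap B$ forces $|N_G(v)\cap B|\le 1$; hence $G[B]$ has maximum degree at most $1$. By the same reasoning applied in $\overline{G}$, the graph $\overline{G}[B']$ has maximum degree at most $1$.

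Next I would set $S = B\cap B'$ and look at the two induced subgraphs $G[S]$ and $\overline{G}[S]$, which are complementary graphs on the vertex set $S$. Since $S\subseteq B$, passing to an induced subgraph cannot increase degrees, so $\Delta(G[S])\le 1$; similarly $\Delta(\overline{G}[S])\le 1$ because $S\subseteq B'$. For every $u\in S$ we have $\deg_{G[S]}(u)+\deg_{\overline{G}[S]}(u)=|S|-1$, whence $|S|-1\le 2$, i.e. $|S|\le 3$. If $|S|=3$, then each vertex of $G[S]$ would have degree exactly $1$, making $G[S]$ a $1$-regular graph on $3$ vertices, which is impossible. Therefore $|S|\le 2$, and inclusion–exclusion gives
$$L_2(G)+L_2(\overline{G})=|B|+|B'|=|B\cup B'|+|B\cap B'|\le n+2.$$

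For tightness I would exhibit $G=K_n$ (with $n\ge 2$, which holds for all admissible $n$ here). In $K_n$ every closed neighbourhood equals $V(K_n)$, so a $2$-limited packing has at most $2$ vertices and $L_2(K_n)=2$; in $\overline{K_n}$ every closed neighbourhood is a single vertex, so every subset is a $2$-limited packing and $L_2(\overline{K_n})=n$. Hence $L_2(K_n)+L_2(\overline{K_n})=n+2$, so the bound is best possible.

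The only step requiring care is the bound $|B\cap B'|\le 2$: one must check that the maximum-degree constraints on $B$ and on $B'$ genuinely transfer to the induced subgraph on $S$ (they do, since induced subgraphs only decrease degrees), and that it is the parity obstruction at $|S|=3$ — not merely the inequality $|S|\le 3$ — that delivers the final $+2$ rather than $+3$. Everything else is routine bookkeeping, so I do not expect a serious obstacle.
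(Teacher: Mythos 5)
Your argument is correct: inside a $2$-limited packing the induced subgraph has maximum degree at most $1$, the complementary degree count on $S=B\cap B'$ gives $|S|\le 3$, the parity obstruction (no $1$-regular graph on three vertices) sharpens this to $|S|\le 2$, and inclusion--exclusion plus the example $G=K_n$ finishes both the bound and its tightness. This is, however, a genuinely different route from the one taken in the paper: the statement itself is quoted from Samadi's paper \cite{S} without proof, and when the authors generalize it to all $k$ in Theorem \ref{th10} they argue via the degree bound of Theorem \ref{th3}, namely $L_k(G)\le n+k-1-\Delta(G)$ applied to both $G$ and $\overline{G}$ (which first yields $n+2k-1$), and then exclude equality through the regularity analysis of Corollary \ref{coro1}, together with a case split according to how $k$ compares with $\Delta(G)$ and $\Delta(\overline{G})$. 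Your intersection-counting proof is more elementary and needs no case distinction on degrees, but it is tailored to $k=2$: for general $k$ the same scheme only gives $|B\cap B'|\le 2k-1$, and the parity obstruction at $|S|=2k-1$ (a $(k-1)$-regular graph on $2k-1$ vertices) disappears when $k$ is odd, so it does not recover the full $n+2k-2$ bound of Theorem \ref{th10}; the paper's degree/regularity approach is what buys that generality.
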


Since a $k$-limited packing of a graph is also a $(k+1)$-limited
packing, we immediately obtain the following inequalities:
$L_1(G)\leq L_2(G)\leq\cdots\leq L_{k}(G)\leq L_{k+1}(G)\leq\cdots$.
Furthermore, the authors obtained the stronger result in \cite{MSH}.

\begin{lem}{\upshape\cite{MSH}}\label{lem5}
Let $G$ be a connected graph of order $n$ and $k\leq \Delta(G)$. Then
$L_{k+1}(G)\geq L_k(G)+1$. Moreover, $L_{k}(G)\geq L_1(G)+k-1$, and
this bound is tight.
\end{lem}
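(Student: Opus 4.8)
The plan is to handle the two assertions in order: first prove the one‑step inequality $L_{k+1}(G)\geq L_k(G)+1$, then derive the additive bound $L_k(G)\geq L_1(G)+k-1$ from it by a short iteration, and finally check that the additive bound is best possible by exhibiting a small extremal family.

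To prove $L_{k+1}(G)\geq L_k(G)+1$, I would start from a $k$-limited packing $B$ of maximum size, so that $|B|=L_k(G)$, and argue that $B$ must be a proper subset of $V(G)$. Indeed, if $B=V(G)$ then for every $v\in V(G)$ we would have $|N[v]|=|N[v]\cap B|\leq k$, hence $\deg_G(v)\leq k-1$ for all $v$ and thus $\Delta(G)\leq k-1$, contradicting the hypothesis $k\leq\Delta(G)$. So some vertex $w$ lies outside $B$; set $B':=B\cup\{w\}$. Since $w\notin B$, for every $v\in V(G)$ the set $N[v]\cap B'$ is the disjoint union of $N[v]\cap B$ and $N[v]\cap\{w\}$, so $|N[v]\cap B'|=|N[v]\cap B|+|N[v]\cap\{w\}|\leq k+1$. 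Therefore $B'$ is a $(k+1)$-limited packing and $L_{k+1}(G)\geq|B'|=L_k(G)+1$.

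For $L_k(G)\geq L_1(G)+k-1$, I would simply apply the first inequality repeatedly (the case $k=1$ being the trivial identity $L_1(G)\geq L_1(G)$). Since $k\leq\Delta(G)$, each index $j\in\{1,\dots,k-1\}$ also satisfies $j\leq\Delta(G)$, so $L_{j+1}(G)\geq L_j(G)+1$ is available for every such $j$; chaining these estimates gives $L_k(G)\geq L_{k-1}(G)+1\geq\cdots\geq L_1(G)+(k-1)$, which is equivalently phrased as an induction on $k$.

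For tightness I would take $G=K_{n}$ with $n=k+1$ (or, more generally, any complete graph or star $K_{1,n-1}$ with $n-1\geq k$), so that $\Delta(G)\geq k$. Here the closed neighbourhood of every vertex is all of $V(G)$ (for the star, this holds for the centre), hence every $k$-limited packing has at most $k$ vertices, while any $k$ vertices form one; thus $L_k(G)=k$ for all $1\leq k\leq\Delta(G)$. Since $\mathrm{diam}(G)\leq 2$, Lemma~\ref{lem16} gives $L_1(G)=1$, and therefore $L_k(G)=k=L_1(G)+k-1$. I do not anticipate a real obstacle in any of this; the only delicate point — and the sole place the hypothesis is used — is invoking $k\leq\Delta(G)$ to exclude the degenerate case $B=V(G)$ in the first step and to keep every intermediate index within range during the iteration. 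Everything else is routine bookkeeping.
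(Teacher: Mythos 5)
Your proof is correct: showing that a maximum $k$-limited packing cannot be all of $V(G)$ when $k\leq\Delta(G)$, adjoining one outside vertex to get a $(k+1)$-limited packing, iterating, and checking tightness on complete graphs or stars is exactly the standard argument for this result, which the paper itself only cites from \cite{MSH} without reproducing a proof. Note also that your argument never uses connectivity, which is consistent with the paper's Remark~2 stating that the connectivity hypothesis in this lemma can be dropped.
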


\noindent\textbf{Remark 2.}
Based on the proof of Lemma \ref{lem5} in \cite{MSH}, the condition of
the connectivity of $G$ in Lemma \ref{lem5} can be deleted.

\begin{lem}{\upshape\cite{BBG}}\label{lem10}
For any graph $G$ of order $n$, $L_k(G)\leq \frac{kn}{\delta(G)+1}.$
\end{lem}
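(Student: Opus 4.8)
The plan is to establish the bound by double counting the set of incidences
$$I=\{(v,b):v\in V(G),\ b\in B,\ b\in N[v]\},$$
where $B$ is a maximum $k$-limited packing of $G$, so that $|B|=L_k(G)$. The only structural fact needed is the symmetry $b\in N[v]$ if and only if $v\in N[b]$, which holds because closed neighbourhoods are defined via an undirected adjacency relation and every vertex lies in its own closed neighbourhood.

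First I would count $|I|$ by grouping incidences according to their first coordinate: for a fixed $v\in V(G)$, the number of $b\in B$ with $(v,b)\in I$ is exactly $|N[v]\cap B|$, which is at most $k$ since $B$ is a $k$-limited packing. Summing over the $n$ choices of $v$ gives $|I|\le kn$. Next I would count $|I|$ by grouping according to the second coordinate: for a fixed $b\in B$, the number of $v\in V(G)$ with $(v,b)\in I$ equals $|N[b]|$, and $|N[b]|\ge\delta(G)+1$ because $b$ has at least $\delta(G)$ neighbours together with $b$ itself. Summing over the $L_k(G)$ vertices $b\in B$ gives $|I|\ge L_k(G)\,(\delta(G)+1)$.

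Combining the two estimates yields $L_k(G)\,(\delta(G)+1)\le |I|\le kn$, and dividing by $\delta(G)+1$ gives $L_k(G)\le \frac{kn}{\delta(G)+1}$, as claimed. There is no genuine obstacle: the entire argument reduces to the counting identity $\sum_{v\in V(G)}|N[v]\cap B|=\sum_{b\in B}|N[b]|$, and the one point worth a word of care is the closed-neighbourhood symmetry that makes the two evaluations of $|I|$ agree, so that no vertex of $B$ is missed or double-counted.
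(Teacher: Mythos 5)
Your double-counting argument is correct: the two evaluations of the incidence set via $\sum_{v\in V(G)}|N[v]\cap B|\le kn$ and $\sum_{b\in B}|N[b]|\ge L_k(G)(\delta(G)+1)$, linked by the symmetry $b\in N[v]\Leftrightarrow v\in N[b]$, give exactly the stated bound. The paper itself cites this lemma from \cite{BBG} without reproducing a proof, and your argument is the standard one for such degree-based packing bounds, so there is no substantive difference to report.
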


In the sequel, let $P_n$, $C_n$, $K_n$, and $K_{s,t}$ denote the path
of order $n$, cycle of order $n$, complete graph of order $n$,
and complete bipartite graph of order $s+t$, respectively.
It is clear that $L_k(P_n)=L_k(C_n)=n$ for $k\geq 3$.

\begin{lem}{\upshape \cite{GGHR}}\label{lem3}
Let $m, n, k\in \mathbb{N}$. Then

$(i)$ $L_k(P_n)=\lceil \frac{kn}{3}\rceil $ for $k=1,2$,

$(ii)$ $L_k(C_n)=\lfloor\frac{kn}{3}\rfloor$ for $k=1,2$ and $n\geq 3$,

$(iii)$ $L_k(K_{n})=min\{k,n\},$

$(iv)$ $L_k(K_{m,n})=
\begin{cases}
1& \text{if $k=1$},\\
min\{k-1,m\}+min\{k-1,n\}& \text{if $k > 1$}.
\end{cases}$
\end{lem}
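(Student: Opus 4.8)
The plan is to handle the four parts separately, since each one reduces to an elementary analysis of closed neighbourhoods; the $k=2$ cases of $(i)$ and $(ii)$ are the only ones requiring a genuinely new idea, because Lemma~\ref{lem10} is too weak there (it only yields $L_2\le n$ for paths and cycles).

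Parts $(iii)$ and $(iv)$ are pure bookkeeping. In $K_n$ every vertex has $N[v]=V(K_n)$, so $B$ is a $k$-limited packing iff $|B|\le k$; hence $L_k(K_n)=\min\{k,n\}$. For $K_{m,n}$ with sides $X$ (of size $m$) and $Y$ (of size $n$), put $a=|B\cap X|$ and $b=|B\cap Y|$; since $N[x]=\{x\}\cup Y$ for $x\in X$ and $N[y]=\{y\}\cup X$ for $y\in Y$, the packing condition is equivalent to: $b\le k-1$ if $a\ge 1$, $a\le k-1$ if $b\ge 1$, together with $a\le k$ and $b\le k$ (forced by vertices outside $B$), plus $a\le m$, $b\le n$. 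For $k=1$ this forces $a+b\le 1$ while a single vertex is always admissible, giving $L_1(K_{m,n})=1$. For $k>1$ one maximizes $a+b$: if $a,b\ge 1$ then $a+b\le\min\{k-1,m\}+\min\{k-1,n\}$ directly, and one checks separately that any admissible choice with $a=0$ or $b=0$ is no larger, while $a=\min\{k-1,m\}$, $b=\min\{k-1,n\}$ is itself admissible; this is the stated value.

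For the lower bounds in $(i)$ and $(ii)$ I would exhibit explicit sets indexed by residues modulo $3$: for $k=1$, take every third vertex; for $k=2$, take all vertices except every third one (for $C_n$ one must shift or augment the deleted set so that every cyclic window of three consecutive vertices contains a deleted vertex). Checking the closed-neighbourhood condition and counting sizes yields $\lceil n/3\rceil$ and $\lceil 2n/3\rceil$ for $P_n$, and $\lfloor n/3\rfloor$ and $\lfloor 2n/3\rfloor$ for $C_n$.

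The matching upper bounds are where the work lies. For $C_n$ a single averaging argument covers both values of $k$: each vertex of $B$ lies in exactly three closed neighbourhoods, so $3|B|=\sum_{v\in V(C_n)}|N[v]\cap B|\le kn$, hence $|B|\le\lfloor kn/3\rfloor$. For $P_n$ with $k=1$, partition $v_1,\dots,v_n$ into $\lceil n/3\rceil$ consecutive blocks of at most three vertices; a block cannot contain two packing vertices, since two vertices of $P_n$ at distance $1$ or $2$ violate the condition at one of them, so $|B|\le\lceil n/3\rceil$. For $P_n$ with $k=2$, observe that the constraints at the internal vertices say exactly that $B$ contains no three consecutive vertices; therefore $\overline{B}$ meets each of the $\lfloor n/3\rfloor$ pairwise disjoint triples $\{v_1,v_2,v_3\},\{v_4,v_5,v_6\},\dots$, so $|\overline{B}|\ge\lfloor n/3\rfloor$ and $|B|\le n-\lfloor n/3\rfloor=\lceil 2n/3\rceil$. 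The one place needing real care — and the main obstacle — is precisely this ``no three consecutive'' reformulation of the $k=2$ constraint, together with the floor/ceiling bookkeeping at the ends of $P_n$ and the wrap-around of $C_n$ needed to make the constructions and the bounds coincide exactly.
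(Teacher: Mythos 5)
Your proof is correct, but note that the paper itself gives no argument for this statement: Lemma~\ref{lem3} is simply quoted from Gallant, Gunther, Hartnell and Rall \cite{GGHR}, so there is no in-paper proof to compare against, and what you have produced is a self-contained verification of the cited formulas. All four parts check out. For $K_n$ the observation $N[v]=V(K_n)$ immediately gives $L_k(K_n)=\min\{k,n\}$, and your constraint bookkeeping for $K_{m,n}$ is sound, including the step you only sketch: an admissible choice with $a=0$ gives at most $\min\{k,n\}\leq \min\{k-1,n\}+1\leq \min\{k-1,m\}+\min\{k-1,n\}$ since $k>1$ and $m\geq 1$, so the mixed choice is indeed optimal. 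For cycles, the double-counting identity $3|B|=\sum_{v}|N[v]\cap B|\leq kn$ (valid because every vertex of $C_n$, $n\geq 3$, has closed neighbourhood of size exactly $3$) cleanly gives both upper bounds, and your spaced/complemented constructions with the wrap-around adjustment give the matching lower bounds. For paths, the block partition handles $k=1$, and the reformulation of the $k=2$ condition as ``no three consecutive vertices of $B$'' is exactly right (end vertices impose no constraint since their closed neighbourhoods have only two vertices), from which $|\overline{B}|\geq\lfloor n/3\rfloor$ and $|B|\leq n-\lfloor n/3\rfloor=\lceil 2n/3\rceil$ follow; the arithmetic identities $n-\lfloor n/3\rfloor=\lceil 2n/3\rceil$ and $n-\lceil n/3\rceil=\lfloor 2n/3\rfloor$ close the gap between constructions and bounds. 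In short: a correct, elementary proof of a result the paper takes as known.
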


\begin{lem}{\upshape\cite{GGHR}}\label{lem4}
If $G$ is a graph, then $L_k(G)\leq k\gamma(G)$. Furthermore, the
equality holds if and only if for any maximum $k$-limited packing $B$ in
$G$ and any minimum dominating set $D$ in $G$ both the following hold:

$(i)$ For any $b\in B$ we have $\mid N[b]\cap D\mid=1$,

$(ii)$ For any $d\in D$ we have $\mid N[d]\cap B\mid=k$.
\end{lem}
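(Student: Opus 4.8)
Proof proposal for Lemma \ref{lem4} ($L_k(G)\leq k\gamma(G)$, with equality characterization).

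The plan is to prove the inequality by a straightforward double-counting argument, and then to extract the equality condition by examining exactly when that counting is tight. First I would fix a maximum $k$-limited packing $B$ in $G$ (so $|B|=L_k(G)$) and a minimum dominating set $D$ in $G$ (so $|D|=\gamma(G)$). The key observation is that since $D$ dominates $G$, every vertex $b\in B$ lies in $N[d]$ for at least one $d\in D$; equivalently, $B\subseteq\bigcup_{d\in D}N[d]$. Hence
\[
L_k(G)=|B|=\Big|\bigcup_{d\in D}\big(N[d]\cap B\big)\Big|\leq\sum_{d\in D}\big|N[d]\cap B\big|.
\]
Now the $k$-limited packing condition applied to each vertex $d\in D$ gives $|N[d]\cap B|\leq k$, so the right-hand side is at most $k|D|=k\gamma(G)$, which is the desired bound.

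For the equality characterization, I would trace back through the two inequalities used. The second inequality ($\sum_{d\in D}|N[d]\cap B|\leq k|D|$) is an equality if and only if $|N[d]\cap B|=k$ for every $d\in D$, which is condition $(ii)$. The first inequality (the union bound) is an equality if and only if the sets $N[d]\cap B$, $d\in D$, are pairwise disjoint and their union is all of $B$; since $D$ is dominating the union is automatically $B$, so the content is pairwise disjointness, i.e. no vertex $b\in B$ lies in $N[d]$ for two distinct $d\in D$. Combined with the fact that $b$ lies in at least one such $N[d]$ (domination again), this says exactly $|N[b]\cap D|=1$ for every $b\in B$, which is condition $(i)$. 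So for a \emph{fixed} pair $(B,D)$ with $B$ a maximum $k$-limited packing and $D$ a minimum dominating set, $L_k(G)=k\gamma(G)$ holds if and only if $(i)$ and $(ii)$ hold for that pair.

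The one subtlety — and the main thing to get right — is the quantifier structure in the statement: the characterization asserts that $(i)$ and $(ii)$ hold for \emph{every} such pair $(B,D)$, not just for one. But this follows immediately: if $L_k(G)=k\gamma(G)$, then the chain of inequalities above is a chain of equalities for \emph{any} choice of maximum $k$-limited packing $B$ and minimum dominating set $D$ (each step only used that $|B|=L_k(G)$, $|D|=\gamma(G)$, that $B$ is a $k$-limited packing, and that $D$ is dominating), hence $(i)$ and $(ii)$ hold for every pair; conversely, if $(i)$ and $(ii)$ hold for some pair, the chain collapses to equalities and gives $L_k(G)=k\gamma(G)$. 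I expect no real obstacle here beyond being careful to state the equivalence cleanly; the argument is elementary once the double-counting is set up with the union on the inside.
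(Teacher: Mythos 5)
The paper states this lemma as a known result cited from \cite{GGHR} and gives no proof of its own, so there is nothing internal to compare against; your double-counting argument is correct and is essentially the standard proof of this bound, with the equality analysis (tightness of the union bound giving $(i)$, tightness of the packing bound giving $(ii)$, and the observation that the chain of inequalities holds for \emph{every} pair $(B,D)$) handled properly.
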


It is worth mentioning that we generalize the results of Lemma \ref{lem16},
Lemma \ref{lem7} and lemma \ref{lem2} to general $k$-limited packing
parameter of graphs, and characterize all the trees $T$ satisfying
$L_{2}(T)= L_1(T)+1$ in Lemma \ref{lem5} later, which are parts of our job.

\section{$k$-limited packing}

In this section we present some tight bounds for the $k$-limited
packing number of a graph in terms of its order, diameter, girth,
and maximum degree, respectively. As a result, we obtain the
tight Nordhaus-Gaddum-type result for this parameter.

It is clear to obtain the following result.

\begin{pro}\label{prop1}
If $G$ is a graph of order $n$ with $n\leq k$, then $L_k(G)=n$.
\end{pro}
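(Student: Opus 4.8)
The statement to prove is Proposition~\ref{prop1}: if $G$ is a graph of order $n$ with $n \leq k$, then $L_k(G) = n$. The plan is to show directly that the entire vertex set $B = V(G)$ is a $k$-limited packing, which immediately forces $L_k(G) = n$ since no packing can have more than $n$ vertices. To verify that $V(G)$ is a $k$-limited packing, I would take an arbitrary vertex $v \in V(G)$ and examine $|N[v] \cap B| = |N[v]|$. Since $N[v] \subseteq V(G)$, we have $|N[v]| \leq |V(G)| = n \leq k$, so the defining inequality $|N[v] \cap B| \leq k$ holds at every vertex.

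Having shown $B = V(G)$ is a $k$-limited packing, it is the largest possible one, so $L_k(G) \geq n$; and trivially $L_k(G) \leq |V(G)| = n$ since any $k$-limited packing is a subset of $V(G)$. Combining the two inequalities gives $L_k(G) = n$, as desired.

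There is essentially no obstacle here: the only thing to be careful about is the chain of inequalities $|N[v]| \leq n \leq k$, which uses nothing beyond the hypothesis and the definition of the closed neighbourhood. No earlier lemma is even needed. (One could alternatively derive it from Lemma~\ref{lem3}(iii) in the special case $G = K_n$, but the direct argument is cleaner and handles all graphs of order at most $k$ uniformly.)
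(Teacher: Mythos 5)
Your proof is correct and is exactly the routine argument the paper has in mind: the paper states this proposition without proof (``It is clear to obtain the following result''), and your verification that $|N[v]|\leq n\leq k$ makes $V(G)$ itself a $k$-limited packing is the intended justification (compare Remark 3, which sharpens the same observation to $\Delta(G)+1\leq k$).
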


\noindent\textbf{Remark 3.}
Actually, the above condition that $n\leq k$ can be weakened to
$\Delta(G)+1 \leq k$. So, we only need to consider the $k$-limited
packing number for graphs $G$ with $\Delta(G)\geq k$.

\begin{pro}\label{prop2}
If $G$ is a graph of order $k+1$, then
\begin{equation*}
L_k(G)=
\left\{
  \begin{array}{ll}
    k & \hbox{ if  $\Delta(G)=k$,}\\
    k+1 & \hbox{ otherwise.} \\
    \end{array}
\right.
\end{equation*}
\end{pro}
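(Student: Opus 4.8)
The plan is to analyze the two cases according to whether $\Delta(G)=k$ or $\Delta(G)\le k-1$, using the fact that $G$ has exactly $k+1$ vertices. First I would dispose of the case $\Delta(G)\le k-1$: here $\Delta(G)+1\le k$, so by Remark 3 (equivalently Proposition~\ref{prop1} with its weakened hypothesis) we immediately get $L_k(G)=n=k+1$. Note that if $\Delta(G)=k$ then $G$ has $n=k+1$ vertices and some vertex $v$ of degree $k$, so $N[v]=V(G)$; this is the structurally important observation for the other case.

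For the case $\Delta(G)=k$, I would argue both bounds. For the upper bound $L_k(G)\le k$: take any vertex $v$ with $\deg_G(v)=k$, so $N[v]=V(G)$. If $B$ were a $k$-limited packing with $|B|=k+1=n$, then $B=V(G)$ and $|N[v]\cap B|=|V(G)|=k+1>k$, a contradiction; hence $L_k(G)\le k$. For the lower bound $L_k(G)\ge k$, I would exhibit a $k$-limited packing of size $k$. The natural candidate is $B=V(G)\setminus\{u\}$ for a suitably chosen $u$. For such a $B$ and any vertex $w$, $|N[w]\cap B|=|N[w]|-[\,u\in N[w]\,]\le (k+1)-[\,u\in N[w]\,]$, so the constraint $|N[w]\cap B|\le k$ holds at every $w$ with $u\in N[w]$ automatically, and at vertices $w$ with $u\notin N[w]$ it requires $\deg_G(w)\le k-1$, i.e. $\deg_G(w)<\Delta(G)=k$. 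So I need to choose $u$ so that every vertex of degree exactly $k$ lies in $N[u]$. Equivalently, letting $A=\{x:\deg_G(x)=k\}$ be the set of full-degree vertices (those with $N[x]=V(G)$), I want $u$ with $A\subseteq N[u]$. But any $x\in A$ has $N[x]=V(G)$, so $x$ is adjacent to every other vertex; picking any $u\in A$ (which is nonempty since $\Delta(G)=k$) gives $N[u]=V(G)\supseteq A$. Then $B=V(G)\setminus\{u\}$ is a $k$-limited packing of size $k$, proving $L_k(G)\ge k$ and hence $L_k(G)=k$.

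The only mild subtlety — the main thing to get right — is the lower-bound construction in the case $\Delta(G)=k$: one must verify that deleting a single full-degree vertex $u$ really does repair the packing constraint at every vertex, including at the other full-degree vertices, which works precisely because a full-degree vertex is adjacent to $u$. Everything else is a short counting argument, so I would keep the write-up brief, combining the two cases and citing Proposition~\ref{prop1} (with Remark 3) for the easy case.
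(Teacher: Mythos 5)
Your proof is correct and follows essentially the same route as the paper: split on $\Delta(G)=k$ versus $\Delta(G)\le k-1$, rule out $L_k(G)=k+1$ in the first case via a vertex $v$ with $N[v]=V(G)$, and handle the second case by taking $V(G)$ itself. Your lower-bound construction is more elaborate than needed, since any set $B$ of exactly $k$ vertices satisfies $|N[w]\cap B|\le |B|=k$ for every $w$, so $L_k(G)\ge k$ holds trivially (as the paper implicitly uses), but your choice of a full-degree vertex $u$ is still valid.
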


\begin{proof}
Let $G$ be a graph of order $k+1$. Then $k \leq L_k(G)\leq k+1$. Let
$\Delta(G)=k$. Assume to the contrary that $L_k(G)= k+1$. It is obtained
that $V(G)$ is the unique maximum $k$-limited packing of $G$. Let $v_0$
be a vertex with maximum degree $k$ in $G$. Then $|N[v_0]\cap V(G)|=k+1$,
which is a contradiction. Thus, $L_k(G)= k$. It remains to show the other
case. Let $\Delta(G)\leq k-1$. Obviously, $V(G)$ is a $k$-limited packing
of $G$, it follows that $L_k(G)=k+1$.
\end{proof}

For a given graph $G$ of order less than $k+2$, we can determine its
$k$-limited packing number by Proposition \ref{prop1} and Proposition
\ref{prop2}. So we are concerned with graphs of order at least $k+2$ in
the following.

\begin{pro}\label{prop3}
If $G$ is a graph of order at least $k+2$, then
$L_k(G)\geq k$.
\end{pro}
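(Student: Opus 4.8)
The goal is to show that any graph $G$ with $|V(G)| = n \geq k+2$ admits a $k$-limited packing of size at least $k$. The plan is to build such a packing greedily. First I would pick any vertex $v_0$ of maximum degree $\Delta = \Delta(G)$; by Remark 3 we may assume $\Delta \geq k$, so the closed neighbourhood $N[v_0]$ has at least $k+1$ vertices. The idea is to choose $B$ to be a $k$-element subset of $N[v_0]$, chosen carefully so that no vertex of $G$ sees all $k$ of them in its closed neighbourhood plus has room to spare — but actually we need the stronger property that every vertex $u$ satisfies $|N[u]\cap B|\le k$, which for a $k$-element set $B$ is automatic unless $B \subseteq N[u]$ for some $u$. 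So the real task is: find a $k$-subset $B$ of $V(G)$ such that $B$ is not contained in $N[u]$ for any $u$, OR handle the case where this is impossible.

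The key step is a case analysis on whether some vertex $u$ has $N[u] = V(G)$. If no vertex is adjacent to all others, then for any vertex $x$ we can find another vertex $y \notin N[x]$; I would then take $B$ to consist of $x$, $y$, and $k-2$ further arbitrary vertices. Since $y \notin N[x]$, the set $B$ is not contained in $N[x]$; and for any other vertex $u$, if $x \in B$ witnessed a violation we'd need $B \subseteq N[u]$, in particular $x,y \in N[u]$ — this does not immediately give a contradiction, so the selection of the remaining $k-2$ vertices must be done with a bit of care, or one argues by a counting/pigeonhole step that a bad $k$-set can be avoided. The cleanest route is probably: since $n \geq k+2$, the family of "forbidden" supersets (closed neighbourhoods of size $\geq k$) can be dodged because each $N[u]$ of size exactly $k$ or more excludes at least... hmm — here is where I expect the main obstacle.

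The main obstacle is the degenerate situation where many vertices have very large closed neighbourhoods, so that every $k$-subset of $V(G)$ might lie inside some $N[u]$. To deal with this I would instead invoke the earlier machinery rather than fight it directly: by Lemma \ref{lem5} (with Remark 2 removing the connectivity hypothesis) we have $L_k(G) \geq L_1(G) + k - 1$ whenever $k \leq \Delta(G)$, and by Lemma \ref{lem16} $L_1(G) \geq 1$ always. Combining these immediately gives $L_k(G) \geq 1 + k - 1 = k$, provided $k \leq \Delta(G)$. The case $k \geq \Delta(G)+1$ is exactly the one handled by Remark 3 / Proposition \ref{prop1}, where $L_k(G) = n \geq k+2 > k$. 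So the proof reduces to these two short appeals, and the "greedy" construction above is really just an alternative self-contained argument for the case $\Delta(G) \ge k$.

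In writing it up I would therefore present the short version: distinguish $\Delta(G) \geq k$ (apply Lemma \ref{lem5} together with Lemma \ref{lem16}) from $\Delta(G) \leq k-1$ (then $V(G)$ itself is a $k$-limited packing, so $L_k(G) = n \geq k+2 \geq k$), and conclude $L_k(G) \geq k$ in all cases. The only subtlety to flag is the legitimacy of using Lemma \ref{lem5} without connectivity, which is precisely the content of Remark 2.
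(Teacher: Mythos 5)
Your final write-up is correct as far as it goes: splitting on $\Delta(G)\ge k$ versus $\Delta(G)\le k-1$, using Lemma \ref{lem5} (with Remark 2 to drop connectivity) together with the trivial bound $L_1(G)\ge 1$ in the first case, and taking $B=V(G)$ in the second, does yield $L_k(G)\ge k$. But this is a much heavier route than the statement requires, and the paper gives no proof precisely because the fact is immediate from the definition: for \emph{any} $k$-subset $B\subseteq V(G)$ and any vertex $v$ one has $|N[v]\cap B|\le |B|=k$, so every $k$-subset of vertices is a $k$-limited packing, and $L_k(G)\ge k$ already whenever $n\ge k$ (the hypothesis $n\ge k+2$ only reflects that smaller orders were settled by Propositions \ref{prop1} and \ref{prop2}). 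This also pinpoints the one genuine error in your exploratory paragraph: you claim the condition is automatic ``unless $B\subseteq N[u]$ for some $u$,'' but even then $|N[u]\cap B|=k\le k$, since the defining inequality is non-strict. The ``main obstacle'' you describe (avoiding closed neighbourhoods that contain $B$) therefore does not exist, and the greedy construction, the case analysis, and the appeal to Lemma \ref{lem5} and Remark 2 (results imported from \cite{MSH}) can all be replaced by this one-line observation; your detour costs self-containedness and gains nothing here.
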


The following result is a generalization of Lemma \ref{lem16}.

\begin{thm}\label{th1}
Let $G$ be a graph of order $n$. Then $L_k(G)=k$ if and only if
one of the following conditions holds:

$(i)$ $n=k$,

$(ii)$ $\Delta(G)=k$, where $n=k+1$,

$(iii)$ for each $(k+1)$-subset $X$ of $V(G)$, $G[X]$ has maximum degree
$k$ or the $k+1$ vertices of $X$ have a common neighbour, where $n\geq k+2$.
\end{thm}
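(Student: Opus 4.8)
The plan is to reduce the three cases to a single clean dichotomy: $L_k(G) = k$ versus $L_k(G) \geq k+1$, where the latter is possible exactly when $V(G)$ admits a $(k+1)$-subset $X$ that is itself a $k$-limited packing. Indeed, by Proposition \ref{prop3} we always have $L_k(G) \geq k$ when $n \geq k+2$, and the cases $n = k$, $n = k+1$ are handled by Propositions \ref{prop1} and \ref{prop2}; so conditions $(i)$ and $(ii)$ dispose of the small orders directly, and the content of the theorem lies in case $(iii)$. For that case the key observation is that $L_k(G) \geq k+1$ if and only if there is a $(k+1)$-subset $X \subseteq V(G)$ which is a $k$-limited packing, since any $k$-limited packing of size $> k$ contains a $(k+1)$-subset that is again a $k$-limited packing (hereditariness of the $k$-limited packing property under taking subsets). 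Therefore $L_k(G) = k$ if and only if no $(k+1)$-subset of $V(G)$ is a $k$-limited packing.

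The next step is to translate ``$X$ is not a $k$-limited packing'' into the stated combinatorial condition. A $(k+1)$-set $X$ fails to be a $k$-limited packing precisely when some vertex $v \in V(G)$ has $|N[v] \cap X| \geq k+1$, i.e. $|N[v] \cap X| = k+1$, which forces $X \subseteq N[v]$. There are two sub-cases according to whether $v \in X$ or $v \notin X$. If $v \in X$, then $X \subseteq N[v]$ means every other vertex of $X$ is adjacent to $v$, so $v$ has degree $\geq k$ in $G[X]$, whence $G[X]$ has maximum degree $k$ (it cannot exceed $k$ since $|X| = k+1$). If $v \notin X$, then $X \subseteq N(v)$, so the $k+1$ vertices of $X$ have the common neighbour $v$. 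Conversely, if $G[X]$ has maximum degree $k$, the vertex attaining it violates the packing bound with itself in its closed neighbourhood; and if the $k+1$ vertices of $X$ share a common neighbour $w$, then $|N[w] \cap X| = k+1$. This establishes the equivalence between ``every $(k+1)$-subset $X$ fails to be a $k$-limited packing'' and condition $(iii)$.

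Assembling these: for $n \geq k+2$, $L_k(G) = k \iff$ no $(k+1)$-subset is a $k$-limited packing $\iff$ every $(k+1)$-subset $X$ satisfies ($G[X]$ has maximum degree $k$, or the vertices of $X$ have a common neighbour), which is exactly $(iii)$. Combining with the small-order analysis gives the full biconditional. The main point requiring care — really the only subtlety — is the hereditary reduction in the first paragraph: one must check that deleting a vertex from a $k$-limited packing of size $k+2$ or more still leaves a $k$-limited packing, which is immediate since removing vertices from $B$ can only decrease each $|N[v] \cap B|$. No genuine obstacle arises; the argument is a matter of organizing the case split on $v \in X$ versus $v \notin X$ cleanly and noting that $|X| = k+1$ caps the maximum degree of $G[X]$ at $k$, so ``$\geq k$'' and ``$= k$'' coincide there.
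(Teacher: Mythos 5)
Your proof is correct and takes essentially the same route as the paper's: the small orders are settled by Propositions \ref{prop1} and \ref{prop2}, and for $n\geq k+2$ both arguments reduce $L_k(G)=k$ to the statement that no $(k+1)$-subset of $V(G)$ is a $k$-limited packing (using that subsets of $k$-limited packings are again $k$-limited packings), with the same case split on whether the vertex violating the packing condition lies inside $X$ (giving $\Delta(G[X])=k$) or outside $X$ (giving a common neighbour). Your packaging of this as a single explicit dichotomy is only a cosmetic reorganization of the paper's two-direction argument.
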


\begin{proof}
The statement holds for $n\leq k+1$ by Proposition \ref{prop1} and
Proposition \ref{prop2}, thus we may assume that $n\geq k+2$ in the
following. Notice that $k \leq L_k(G)\leq n$ for $n\geq k+2$. Let $G$
be a graph of order $n$ such that for each $(k+1)$-subset $X$ of $V(G)$,
$G[X]$ has maximum degree $k$ or the $k+1$ vertices of $X$ have a
common neighbour. Assume that $G$ has a $k$-limited packing $B$ with
at least $k+1$ vertices. Let $X$ be a $(k+1)$-subset of $B$.
Obviously, $X$ is also a $(k+1)$-subset of $V(G)$. If $G[X]$ has a
vertex $v_0$ with degree $k$, then $|N[v_0]\cap B|\geq k+1$, which
is a contradiction. If the $k+1$ vertices of $X$ have a common neighbour
$a$, then $|N[a]\cap B|\geq k+1$, which is also a contradiction.
Thus, $L_k(G)= k$.

It remains to show the converse. Let $G$ be a graph of order $n$ such
that $L_k(G)= k$. Assume that there exists a $(k+1)$-subset $X_0$ of $V(G)$
such that $G[X_0]$ has maximum degree at most $k-1$ and each vertex outside
$X_0$ is adjacent to at most $k$ vertices in $X_0$. It follows that $X_0$ is
a $k$-limited packing of $G$, which implies that $L_k(G)\geq k+1$,
a contradiction. Therefore, if $G$ is a graph of order at least $k+2$ with
$L_k(G)= k$, then for each $(k+1)$-subset $X$ of $V(G)$, $G[X]$ has maximum
degree $k$ or the $k+1$ vertices of $X$ have a common neighbour.
\end{proof}

The following result is an immediate and obvious corollary of the
above theorem.

\begin{cor}\label{coro2}
Let $G$ be a graph of order at least $k+1$ such that $L_k(G)=k$.
Then $diam(G)\leq 2$.
\end{cor}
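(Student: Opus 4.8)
The plan is to apply Theorem~\ref{th1} directly, splitting on the order $n$ of $G$; since $n \geq k+1$, only parts (ii) and (iii) of that theorem are relevant.

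First I would settle the case $n = k+1$. Here Theorem~\ref{th1}(ii) gives $\Delta(G) = k = n-1$, so $G$ has a universal vertex $z$. Any two distinct vertices are then adjacent or joined by a path of length $2$ through $z$, so $diam(G) \leq 2$.

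For $n \geq k+2$ I would argue by contradiction. If $diam(G) \geq 3$ (this includes the disconnected case, where $diam(G) = \infty$), pick vertices $u \neq v$ with $uv \notin E(G)$ and $N(u) \cap N(v) = \emptyset$, and extend $\{u,v\}$ to a $(k+1)$-subset $X$ of $V(G)$. By Theorem~\ref{th1}(iii), either $G[X]$ has maximum degree $k$ or the $k+1$ vertices of $X$ have a common neighbour. In the first case some $w \in X$ is adjacent to all other $k$ vertices of $X$; since $uv \notin E(G)$ forces $w \notin \{u,v\}$, such a $w$ would be a common neighbour of $u$ and $v$, contradicting $N(u) \cap N(v) = \emptyset$. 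In the second case a common neighbour of all vertices of $X$ is in particular adjacent to both $u$ and $v$, giving the same contradiction. Hence $diam(G) \leq 2$.

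I do not expect a genuine obstacle here: the crux is the elementary observation that two vertices at distance at least $3$ are neither adjacent nor possess a common neighbour, so \emph{any} $(k+1)$-set containing such a pair automatically fails both alternatives of Theorem~\ref{th1}(iii). The only step needing a little care is ruling out the ``common neighbour'' alternative as well, by noting that a common neighbour of $X$, wherever it lies, must be adjacent to each of the two chosen far-apart vertices.
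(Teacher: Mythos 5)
Your proof is correct and follows exactly the route the paper intends: the paper states this as an ``immediate and obvious corollary'' of Theorem~\ref{th1}, and your argument simply fills in the details, handling $n=k+1$ via the universal vertex from part (ii) and $n\geq k+2$ via the observation that two vertices at distance at least $3$ kill both alternatives of part (iii). No gaps; the case split and the contradiction argument are exactly what is needed.
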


Next, we present a lower bound of the $k$-limited packing number of a
graph in terms of it diameter for $k\geq 3$, which is a generalization
of Lemma \ref{lem7}.

\begin{thm}\label{th5}
Let $G$ be a connected graph and $ \Delta(G)\geq k\geq 3$. Then
$L_k(G)\geq diam(G)+k-2$. Moreover, the lower bound is tight.
\end{thm}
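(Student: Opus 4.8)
The plan is to exhibit an explicit $k$-limited packing of size $\operatorname{diam}(G)+k-2$. Fix a diametral path $v_0 v_1 \cdots v_d$ in $G$, where $d = \operatorname{diam}(G)$. The key structural fact about such a shortest path is that $N[v_i] \cap \{v_0,\dots,v_d\} \subseteq \{v_{i-1},v_i,v_{i+1}\}$ for each $i$, and more generally any vertex $w$ of $G$ is adjacent to at most three consecutive vertices $v_{i-1},v_i,v_{i+1}$ of the path (otherwise two path-vertices at distance $\ge 3$ would have a common neighbour, contradicting minimality of the path). So if I pick every vertex on the path, each closed neighbourhood already meets the path in at most $3$ vertices; the issue is only that I want the bound to be roughly $d$, not $d+1$, and I want to add $k-2$ extra vertices coming from the hypothesis $\Delta(G)\ge k$.

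First I would take $B_0 = \{v_0, v_1, \dots, v_d\}$ minus a carefully chosen vertex (to make the arithmetic come out to $d+k-2$ after the additions), and observe via the consecutiveness fact that for every vertex $w$, $|N[w]\cap B_0|\le 3 \le k$ since $k\ge 3$. Next, I would use a vertex $u$ of maximum degree, $\deg(u)\ge k$, chosen (or relocated along the graph) so that it is far from the diametral path — concretely, by connectedness one can attach to the path a path reaching a maximum-degree vertex, or simply argue that among $u$ and its $\ge k$ neighbours one can select $k-2$ of them together with an appropriate pair of path endpoints so that no closed neighbourhood is overloaded. The cleanest version: pick the endpoint $v_d$, one of its private-looking path neighbours, and then $k-2$ vertices from the star around a far maximum-degree vertex, checking that the only vertex seeing many of the star-vertices is $u$ itself, which sees exactly $k-2+ (\text{at most }2) \le k$ of them when the star is disjoint from the path's relevant neighbourhoods.

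The main obstacle, and where I'd spend the real effort, is the interaction between the diametral path and the high-degree vertex: if $G$ is small or the only high-degree vertices lie near the path, the star around $u$ and the closed neighbourhoods of path vertices can overlap, so naively unioning a size-$d$ subset of the path with $k-2$ star vertices may violate the $k$-limited condition at some $v_i$ or at $u$. I expect the fix is a case analysis on $\operatorname{dist}(u, \{v_0,\dots,v_d\})$: when $u$ is far, the two pieces are essentially independent and the count is $|B_0| + (k-2)$; when $u$ is close, one instead builds the packing entirely from neighbourhoods along an extended path through $u$, trading path-vertices for star-vertices one-for-one and using $k\ge 3$ to absorb the slack. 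For tightness, I would point to a spider/subdivided-star example: take a path of length $d$ and attach $k-1$ extra leaves (or a clique $K_{k}$) at one end, for which one checks directly that no $k$-limited packing exceeds $d+k-2$, e.g. because every such packing must leave gaps of size two along the long path while it can use at most $k-1$ of the appended vertices.
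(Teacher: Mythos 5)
Your first step is fine and coincides with the paper's: the shortcut argument shows that every vertex (on or off a diametral path $P$) has at most three path vertices in its closed neighbourhood, so $V(P)$ itself is already a $3$-limited, hence $k$-limited, packing of size $diam(G)+1$; you do not even need to delete a vertex. The genuine gap is in the second step, which is precisely where the hypothesis $\Delta(G)\geq k$ must enter: you never actually produce the additional $k-2$ (equivalently $k-3$) vertices. The construction you sketch --- add $k-2$ neighbours of a maximum-degree vertex $u$ and check only $u$'s neighbourhood --- does not verify the packing condition, because a vertex other than $u$ may also be adjacent to many of the chosen neighbours of $u$ (for instance when $N(u)$ induces a clique, or when the chosen vertices have a second common neighbour), and the situation in which every maximum-degree vertex lies on or near the diametral path is exactly the one you defer with ``I expect the fix is a case analysis on $dist(u,V(P))$.'' That case analysis is the entire difficulty of the theorem and is not carried out, so as written this is a plan rather than a proof. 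The paper sidesteps it completely: having shown $L_3(G)\geq diam(G)+1$, it applies Lemma \ref{lem5} (together with Remark 2), namely $L_{j+1}(G)\geq L_j(G)+1$ whenever $j\leq \Delta(G)$, to climb from $L_3$ to $L_k$ and obtain $L_k(G)\geq diam(G)+1+(k-3)$. If you want a self-contained argument you would essentially have to reprove that lemma, i.e.\ show that any maximum $j$-limited packing with $j\leq\Delta(G)$ can be enlarged by one vertex; your star-around-$u$ heuristic is not a substitute for it.

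The tightness discussion is also shaky. For the broom (a path of length $d$ with $k-1$ leaves attached at one end) the diameter is $d+1$, and one checks $L_k=d+k-1=diam+k-2$, so the example does work, but your stated value $d+k-2$ miscounts the diameter, and the justification ``every such packing must leave gaps of size two along the long path'' is the $1$-limited constraint: for $k\geq 3$ the whole path may lie in the packing, so that reasoning proves nothing. The paper's tightness examples are different and immediate: any non-complete graph of order at least $k+1$ with $L_k(G)=k$ has $diam(G)=2$ by Corollary \ref{coro2}, hence satisfies $L_k(G)=diam(G)+k-2$.
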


\begin{proof}
Let $P=v_1v_2\cdots v_{diam(G)+1}$ be a path of length $diam(G)$
in $G$. Obviously, for each vertex $v_i$ on $P$,
$|N[v_i]\cap V(P)|\leq 3$. We claim that $V(P)$ is a $3$-limited
packing in $G$. Assume to the contrary that there exists a vertex
$u$ outside $P$ such that $|N(u)\cap V(P)|\geq 4$. Let
$N(u)\cap V(P)=\{v_{i_1},\ldots,v_{i_d}\}$
with $i_1\leq \cdots \leq i_d$ and $d\geq 4$. Then
$P'=v_1\cdots v_{i_1}uv_{i_d}\cdots v_{diam(G)+1}$ is a path between
$v_1$ and $v_{diam(G)+1}$, whose length is less than $diam(G)$, a
contradiction. Thus, $L_3(G)\geq |V(P)|=diam(G)+1$. And by
Lemma \ref{lem5}, we have
$L_k(G)\geq L_3(G)+k-3\geq diam(G)+1+k-3=diam(G)+k-2.$

Corollary \ref{coro2} shows that non-complete graphs $G$ of order at least
$k+1$ with $L_k(G)=k$ are ones satisfying $L_k(G)=diam(G)+k-2$.
\end{proof}

Recall that \emph{the girth} of a graph $G$ is the length of a shortest cycle
in $G$, denoted by $g(G)$.

\begin{thm}\label{th7}
If $G$ is a graph with girth $g(G)$, then
$L_1(G)\geq \lfloor\frac{g(G)}{3}\rfloor$.
Moreover, the lower bound is tight.
\end{thm}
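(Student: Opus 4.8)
The plan is to exhibit a packing of the right size by placing vertices on a shortest cycle. Recall first that $L_1(G)=\rho(G)$ and that a set $B\subseteq V(G)$ is a packing exactly when every two distinct vertices of $B$ are at distance at least $3$ in $G$ (indeed, $|N[v]\cap B|\le 1$ for all $v$ fails iff some closed neighbourhood contains two vertices of $B$, which happens iff two vertices of $B$ are at distance at most $2$). If $G$ has no cycle there is nothing to prove, so let $C=c_0c_1\cdots c_{g-1}c_0$ be a shortest cycle of $G$, put $g=g(G)$ and $t=\lfloor g/3\rfloor$. When $g\le 5$ we have $t=1$ and any single vertex is a packing, so we may assume $g\ge 6$. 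The candidate packing is $B=\{c_0,c_3,c_6,\ldots,c_{3(t-1)}\}$, which has exactly $t$ vertices.

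Next I would check the spacing along $C$: for $0\le i<j\le t-1$ the two arcs of $C$ between $c_{3i}$ and $c_{3j}$ have lengths $3(j-i)$ and $g-3(j-i)$, and since $1\le j-i\le t-1$ and $3(t-1)\le g-3$ (because $3t\le g$), both of these lengths lie in $[3,g-3]$; hence $d_C(c_{3i},c_{3j})\ge 3$.

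The heart of the argument is to upgrade ``distance at least $3$ on $C$'' to ``distance at least $3$ in $G$'' for these pairs, using the minimality of $g$. Suppose $d_G(u,v)\le 2$ for two such vertices $u,v$ and let $Q$ be a shortest $u$--$v$ path, so $|Q|\in\{1,2\}$. If $Q$ is internally disjoint from $C$, then $Q$ together with the shorter $u$--$v$ arc of $C$ (of length $d_C(u,v)\ge 3$) forms a cycle of length at least $4$ and at most $2+\lfloor g/2\rfloor$, which is less than $g$ since $g\ge 6$; this contradicts $g(G)=g$. If instead $Q$ has an internal vertex $x$ on $C$, then $|Q|=2$ and $Q=uxv$; for each of the edges $ux$ and $xv$, if it were a chord of $C$ then that edge together with the shorter arc joining its ends would be a cycle of length at least $3$ and at most $1+\lfloor g/2\rfloor<g$, again a contradiction, so both $ux$ and $xv$ are edges of $C$. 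But then $u$ and $v$ are the two neighbours of $x$ on $C$, whence $d_C(u,v)=2$, contradicting the previous paragraph. Thus every two vertices of $B$ are at distance at least $3$ in $G$, so $B$ is a packing and $L_1(G)\ge|B|=t=\lfloor g/3\rfloor$.

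Finally, tightness follows from Lemma \ref{lem3}$(ii)$: for the cycle $C_g$ itself we have $g(C_g)=g$ and $L_1(C_g)=\lfloor g/3\rfloor$, so equality holds for every $g\ge 3$. The one delicate point is in the third step, namely the possibility that the short connection $Q$ between two chosen vertices passes through other vertices of $C$; what makes this harmless is the observation that two cycle edges meeting at a common vertex already force cycle-distance $2$, which the spacing of $B$ excludes.
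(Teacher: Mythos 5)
Your proof is correct and follows essentially the same route as the paper: both place a packing of size $\lfloor g(G)/3\rfloor$ on a shortest cycle and use the minimality of that cycle to rule out shortcuts, with $C_n$ giving tightness via Lemma \ref{lem3}$(ii)$. The only difference is cosmetic: you verify the packing property through the pairwise ``distance at least $3$ in $G$'' characterization (explicitly excluding chords as well, a point the paper's neighbour-counting argument leaves implicit), whereas the paper takes a maximum packing of $C$ and checks that each vertex outside $C$ has at most one neighbour on $C$.
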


\begin{proof}
Let $G$ be a graph and $C$ be a cycle of length $g(G)$ in $G$.
The statement is evidently true for $g(G)\leq 4$. Thus, we only
need to consider the case when $g(G)\geq 5$. Let $B$ be a maximum
$1$-limited packing of $C.$ Then
$|B|=L_1(C)=\lfloor\frac{g(G)}{3}\rfloor$ by Lemma \ref{lem3}.
Next we will show that $B$ is also a $1$-limited packing of $G$,
which implies that $L_1(G)\geq \lfloor\frac{g(G)}{3}\rfloor$.
It is sufficient to show that each vertex $v$ outside $C$ has at most
one neighbour on $C$. Assume to the contrary that there is a vertex
$v_0$ outside $C$ that is adjacent to two vertices, say $x,$ $y$,
on $C$. Let $P$ be the shortest path between $x$ and $y$ on $C$.
If $|V(P)|\leq 3$, then $x$, $v_0$ and $y$ are on either a $C_3$ or
$C_4$ in $G$, contradicting $g(G)\geq 5.$ Now we may assume that
$|V(P)|\geq 4$. Let $C'$ be the cycle obtained from $C$ replacing $P$
by the path $xv_0y$. Then the length of $C'$ is less than $g(G)$, which
is a contradiction. Thus, each vertex outside $C$ has at most one neighbour
on $C$.  Furthermore, cycles are graphs $G$ with
$L_1(G)= \lfloor\frac{g(G)}{3}\rfloor$ by Lemma \ref{lem3}. The proof is complete.
\end{proof}

Next, we give the lower bound of the $k$-limited packing number of a
graph with respect to its girth for general $k\geq 2$.

\begin{thm}\label{th2}
If $G$ is a graph with girth $g(G)$, then
$L_2(G)\geq \lfloor\frac{2g(G)}{3}\rfloor$ and
$L_k(G)\geq g(G)+k-3$ for $ \Delta(G)\geq k \geq 3$.
Moreover, the lower bounds are tight.
\end{thm}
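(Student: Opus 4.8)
The plan is to mirror the proof of Theorem~\ref{th7}: fix a cycle $C$ of length $g(G)$ in $G$, place a suitable packing on $C$, and use the ``shortest cycle'' constraint to force every vertex off $C$ to be adjacent to only a bounded number of vertices of $C$, so that the packing of $C$ survives in $G$. For the first inequality the packing on $C$ will be a \emph{maximum $2$-limited packing of $C$}; for the second it will be \emph{all of $V(C)$}, viewed as a $3$-limited packing, after which I bootstrap to general $k$ via Lemma~\ref{lem5}.

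For $L_2(G)\ge\lfloor 2g(G)/3\rfloor$: if $g(G)\le 4$ the right-hand side is at most $2$, and since $G$ contains a cycle it has at least three vertices, while every $2$-element vertex subset is trivially a $2$-limited packing, so the bound is immediate. If $g(G)\ge 5$, let $C$ be a shortest cycle and let $B$ be a maximum $2$-limited packing of $C$, so $|B|=L_2(C)=\lfloor 2g(G)/3\rfloor$ by Lemma~\ref{lem3}(ii). I claim $B$ is a $2$-limited packing of $G$. Minimality of $C$ makes it chordless (a chord would produce a shorter cycle), so for $v\in V(C)$ we get $N_G[v]\cap V(C)=N_C[v]$, hence $|N_G[v]\cap B|=|N_C[v]\cap B|\le 2$. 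For $v\notin V(C)$, exactly as in the proof of Theorem~\ref{th7}, two neighbours $x,y$ of $v$ on $C$ would, together with the shorter $x$--$y$ arc of $C$, form a cycle of length at most $g(G)/2+2<g(G)$, a contradiction; hence $|N_G[v]\cap B|\le|N_G(v)\cap V(C)|\le 1$. Thus $L_2(G)\ge|B|=\lfloor 2g(G)/3\rfloor$.

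For $L_k(G)\ge g(G)+k-3$ with $\Delta(G)\ge k\ge 3$, I first prove $L_3(G)\ge g(G)$ by showing that $V(C)$ is a $3$-limited packing of $G$, again with $C$ a shortest cycle. As $C$ is chordless, $|N_G[v]\cap V(C)|=|N_C[v]|=3$ for $v\in V(C)$. For $v\notin V(C)$, if $v$ had at least four neighbours on $C$ they would split $C$ into at least four arcs of total length $g(G)$, so one arc would have length at most $g(G)/4$; joining its endpoints through $v$ gives a cycle of length at most $g(G)/4+2<g(G)$, impossible. Hence every vertex of $G$ has at most three neighbours in $V(C)$, so $V(C)$ is a $3$-limited packing and $L_3(G)\ge|V(C)|=g(G)$. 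Finally, since $\Delta(G)\ge k$, applying Lemma~\ref{lem5} (with the connectivity hypothesis removed, see Remark~2) in turn for the indices $3,4,\dots,k-1$ yields $L_k(G)\ge L_3(G)+(k-3)\ge g(G)+k-3$.

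For tightness of the first bound, take $G=C_n$: then $g(G)=n$ and $L_2(C_n)=\lfloor 2n/3\rfloor$ by Lemma~\ref{lem3}(ii). For the second bound, take $G$ to be a cycle $C_{g}$ together with $k-2$ pendant vertices all attached to a single vertex $u$ of the cycle; then $\Delta(G)=k$, $g(G)=g$ and $|V(G)|=g+k-2$, while $V(G)$ itself is not a $k$-limited packing because $|N_G[u]\cap V(G)|=k+1$, yet deleting any one pendant yields a $k$-limited packing, so $L_k(G)=g+k-3$. I expect the only delicate points to be the low-girth cases of the $2$-limited bound and checking that $\Delta(G)\ge k$ is exactly what makes Lemma~\ref{lem5} applicable at every step; the cycle-shortening arguments are routine.
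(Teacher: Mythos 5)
Your proof is correct and takes essentially the same route as the paper: fix a shortest cycle $C$, use cycle-shortening arguments to bound how many neighbours a vertex outside $C$ can have on $C$ so that a maximum $2$-limited packing of $C$ (respectively $V(C)$ as a $3$-limited packing) remains a packing of $G$, and then bootstrap to general $k\geq 3$ via Lemma \ref{lem5} together with Remark 2. The only notable difference is your tightness witness for the second bound (a girth-$g$ cycle with $k-2$ pendant vertices at one vertex), which shows tightness for every girth, whereas the paper only points to girth-$3$ graphs with $L_k(G)=k$.
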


\begin{proof}
The statement trivially holds for $g(G)\leq 3$. Thus, we may assume that
$g(G)\geq 4$ in the following. Let $C$ be a cycle of length $g(G)$ in $G$.
We first present the following claim.

\textbf{Claim 1.} Each vertex outside $C$ has at most two neighbours on $C$.

\noindent\textbf{Proof of Claim 1:}
Suppose to the contrary that there is a vertex $v$ outside $C$ such
that $v$ is adjacent to three vertices, say $x,$ $y,$ $z$, on $C$.
Let $P$ be the shortest path containing $x,$ $y,$ $z$ on $C$ such
that the end vertices of $P$ are contained in $\{x,y,z\}.$  Without
loss of generality, assume that $x$, $y$ are the end vertices of $P.$
Obviously, $|V(P)|\geq 3.$ Suppose that $|V(P)|=3$. It follows that
$z$ is adjacent to both $x$ and $y$. Therefore,  $G[\{x,z,v\}]$ is exactly
$C_3$, which contradicts to that $g(G)\geq 4.$ Suppose that $|V(P)|\geq 4$.
Let $C'$ be the cycle obtained from $C$ replacing $P$ by the path $xvy$.
Then the length of $C'$ is less than $g(G)$, which is a contradiction.
Thus, each vertex outside $C$ has at most two neighbours on $C$.

Let $B$ be a maximum $2$-limited packing of $C.$
Then $|B|=L_2(C)=\lfloor\frac{2g(G)}{3}\rfloor$ by Lemma \ref{lem3}.
By Claim $1$, we obtain that $B$ is also a $2$-limited packing of $G$.
Thus, $L_2(G)\geq|B| =\lfloor\frac{2g(G)}{3}\rfloor$. Moreover,
cycles are graphs $G$ with $L_2(G)= \lfloor\frac{2g(G)}{3}\rfloor$
by Lemma \ref{lem3}.

Observe that $V(C)$ is a maximum $3$-limited packing of $C$.
And by Claim $1$, it is known that $V(C)$ is also a $3$-limited packing
of $G$, which implies that $L_3(G)\geq g(G)$. It follows from
Lemma \ref{lem5} and Remark $2$ that $L_k(G)\geq L_3(G)+k-3\geq g(G)+k-3$
for $k\geq 3$. Furthermore, graphs $G$ of order at least $k+1$ with triangles, satisfying $L_k(G)=k$, have the property that $L_k(G)=g(G)+k-3$.
\end{proof}

Next, we turn to study the upper bound of the $k$-limited packing
number of a graph.

\begin{thm}\label{th3}
If $G$ is a graph of order $n$,
then $L_k(G)\leq n+k-1-\Delta(G)$.
\end{thm}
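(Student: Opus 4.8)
The plan is to take a vertex $v$ of maximum degree $\Delta := \Delta(G)$ and argue that a maximum $k$-limited packing cannot use too many vertices from the closed neighbourhood $N[v]$. Let $B$ be a maximum $k$-limited packing of $G$, so $L_k(G) = |B|$. By the packing condition applied to the vertex $v$ itself, $|N[v] \cap B| \le k$. Since $|N[v]| = \Delta + 1$, the complement $V(G) \setminus N[v]$ has exactly $n - \Delta - 1$ vertices, and all vertices of $B$ lying outside $N[v]$ are among these. Hence
\[
|B| = |N[v] \cap B| + |B \setminus N[v]| \le k + (n - \Delta - 1) = n + k - 1 - \Delta,
\]
which is precisely the claimed bound. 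This is genuinely a one-step counting argument: the only input is the defining inequality of a $k$-limited packing at a single cleverly chosen vertex, namely one realizing the maximum degree.

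I would then note that no connectivity or other hypothesis is needed, and that the bound is an immediate consequence of the definition together with the trivial fact $|B \setminus N[v]| \le |V(G) \setminus N[v]|$. If the authors want the statement to be tight (the theorem as stated does not claim tightness, but a remark would be natural), I would point to $K_{1,n-1}$ or more generally to graphs where the maximum-degree vertex is adjacent to everything and the rest of $B$ can be freely taken outside; for the star $K_{1,n-1}$ one checks $L_k(K_{1,n-1}) = \min\{k-1, n-1\} + \min\{k-1, 1\}$ from Lemma \ref{lem3}(iv), and comparing with $n + k - 1 - \Delta = n + k - 1 - (n-1) = k$ shows the bound is attained when $k \ge 2$ and $n-1 \ge k-1$.

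There is essentially no obstacle here; the "hard part," if one insists on naming one, is only to make sure the chosen vertex $v$ is a legitimate vertex of $G$ (true since $G$ is nontrivial) and that $B$ itself is used correctly — the inequality $|N[v]\cap B|\le k$ holds because $B$ is a $k$-limited packing and the condition is universal over all vertices of $G$, in particular over $v$. I would present the whole argument in three or four lines and move on.
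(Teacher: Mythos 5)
Your proof is correct and is essentially the same argument as the paper's: both apply the packing condition at a vertex of maximum degree and count, the only cosmetic difference being that you bound $|B|$ directly while the paper bounds $|\overline{B}|\geq\Delta(G)+1-k$ (with a trivial case split for $k\geq\Delta(G)+1$ that your phrasing avoids). Your tightness remark via $K_{1,n-1}$ is a correct bonus but not part of the stated theorem.
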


\begin{proof}
Let $v_0$ be a vertex of maximum degree $\Delta(G)$ in $G$.
If $k\geq \Delta(G)+1$, then it is clear that $V(G)$ is a
$k$-limited packing of $G$, and hence $L_k(G)=n\leq n+k-1-\Delta(G)$.
Thus, we may assume that $k< \Delta(G)+1$ in the following. Let $B$
be a maximum $k$-limited packing of $G$. Since $|N[v_0]\cap B|\leq k$,
it follows  that there exist at least $\Delta(G)+1-k$ vertices in
$N[v_0]\setminus B$, which means that $|\overline{B}|\geq \Delta(G)+1-k$.
Thus, $L_k(G)=|B|=n-|\overline{B}|\leq n-(\Delta(G)+1-k)=n+k-1-\Delta(G)$.
\end{proof}

We define the graph class $\mathcal{G}$ consisting of all graphs $G$
constructed as follows. Let $G$ be a graph of order $n$ such that
$V(G)=A_0\cup B_0$ has the following properties:

$(i)$ $|A_0\cap B_0|=2$,

$(ii)$ $G[A_0]$ has a spanning star, and each component of $G[B_0]$
is $K_1$ or $K_2$,

$(iii)$ for each vertex $v\in \overline{B_0}$, $|N(v)\cap B_0|\leq 2$.

The following result shows that $\mathcal{G}$ is the set of
all graphs $G$ of order $n$ with $L_2(G)= n+1-\Delta(G)$.

\begin{cor}\label{cor14}
If $G$ is a graph of order $n$, then $L_2(G)\leq n+1-\Delta(G)$.
Moreover, $L_2(G)= n+1-\Delta(G)$ if and only if $G \in \mathcal{G}$.
\end{cor}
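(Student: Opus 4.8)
The inequality $L_2(G)\le n+1-\Delta(G)$ is precisely the case $k=2$ of Theorem \ref{th3}, so all the work lies in the equality characterization, which I would attack through a direct structural analysis of maximum $2$-limited packings: in one direction I extract the pair $(A_0,B_0)$ from such a packing by pushing the estimate in the proof of Theorem \ref{th3} to equality, and in the other direction I exhibit $B_0$ itself as a $2$-limited packing of the required size.

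For the ``only if'' direction, assume $L_2(G)=n+1-\Delta(G)$ (this already forces $\Delta(G)\ge 1$, since $L_2(G)\le n$). Let $v_0$ be a vertex of degree $\Delta:=\Delta(G)$ and let $B$ be a maximum $2$-limited packing, so $|B|=n+1-\Delta$ and $|\overline{B}|=\Delta-1$. Because $|N[v_0]\cap B|\le 2$ we get $|N[v_0]\setminus B|=(\Delta+1)-|N[v_0]\cap B|\ge\Delta-1=|\overline{B}|$; but $N[v_0]\setminus B\subseteq\overline{B}$, so this inequality chain must be tight, forcing $|N[v_0]\cap B|=2$ and $\overline{B}=N[v_0]\setminus B$. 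Now put $A_0:=N[v_0]$ and $B_0:=B$. Then $A_0\cup B_0\supseteq(N[v_0]\setminus B)\cup B=\overline{B}\cup B=V(G)$ and $|A_0\cap B_0|=|N[v_0]\cap B|=2$, which is (i); $v_0$ is adjacent to every other vertex of $A_0$, so $G[A_0]$ has a spanning star, and since $B$ is a $2$-limited packing no vertex of $G[B]$ can have two neighbours inside $B$ (else its closed neighbourhood would contain three vertices of $B$), so every component of $G[B_0]$ is $K_1$ or $K_2$, which is (ii); and for $v\in\overline{B_0}=\overline{B}$ we have $|N(v)\cap B_0|=|N[v]\cap B|\le 2$, which is (iii). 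Hence $G\in\mathcal{G}$.

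For the ``if'' direction, let $G\in\mathcal{G}$ with $V(G)=A_0\cup B_0$ as in the definition. The key point is that $B_0$ is itself a $2$-limited packing of $G$: if $v\notin B_0$ then $|N[v]\cap B_0|=|N(v)\cap B_0|\le 2$ by (iii), while if $v\in B_0$ then by (ii) $v$ has at most one neighbour in $B_0$, so $|N[v]\cap B_0|\le 2$. Thus $L_2(G)\ge|B_0|$. By inclusion--exclusion and $|A_0\cap B_0|=2$ we have $|B_0|=n+2-|A_0|$, and since $G[A_0]$ has a spanning star its centre has degree at least $|A_0|-1$ in $G$, whence $|A_0|\le\Delta(G)+1$. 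Therefore $L_2(G)\ge|B_0|=n+2-|A_0|\ge n+1-\Delta(G)$, and combined with Theorem \ref{th3} this gives $L_2(G)=n+1-\Delta(G)$.

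I do not anticipate a serious obstacle, since the proof merely tracks the two inequalities already in play; the one delicate point is the tightness argument in the forward direction, where one must observe that the two a priori independent facts $|\overline{B}|=\Delta-1$ and $|N[v_0]\setminus B|\ge\Delta-1$ are simultaneously squeezed to equality, which is what guarantees both that $\overline{B}$ sits inside $N[v_0]$ (so that $A_0$ and $B_0$ really cover $V(G)$) and that they overlap in exactly two vertices.
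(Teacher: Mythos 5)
Your proof is correct and follows essentially the same route as the paper: the upper bound is Theorem~\ref{th3} with $k=2$, the forward direction extracts $A_0=N[v_0]$ and $B_0=B$ by forcing the counting estimate to be tight (giving $|N[v_0]\cap B|=2$ and $\overline{B}=N[v_0]\setminus B$), and the converse exhibits $B_0$ itself as a $2$-limited packing. The only real difference is in the converse: the paper proves, via a separate degree-counting claim, that $\Delta(G)$ equals exactly the degree $t$ of the star centre of $G[A_0]$, so that $|B_0|=n-|A_0|+2=n+1-\Delta(G)$ holds exactly, whereas you use only the one-sided bound $\Delta(G)\geq |A_0|-1$ and then squeeze $L_2(G)\geq n+2-|A_0|\geq n+1-\Delta(G)$ against the upper bound. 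Your shortcut is valid and a bit leaner, since the statement only needs the equality $L_2(G)=n+1-\Delta(G)$ and not the exact identification of $\Delta(G)$ with $t$, which is the extra information the paper's Claim~1 delivers.
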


\begin{proof}
We first restate the proof for Theorem \ref{th3}. Let $B$ be a maximum
$2$-limited packing in $G$. Obviously, each component of $G[B]$ is
$K_1$ or $K_2$, and $|N[v]\cap B|\leq 2$ for each vertex $v$ of $G$.
Let $v_0$ be a vertex of maximum degree $\Delta(G)$.
Since $|N[v_0]\cap B|\leq 2$, it follows that there exist at least
$\Delta(G)-1$ vertices in $N[v_0]\setminus B$. Thus,
$L_2(G)=|B|=n-|\overline{B}|\leq n-(\Delta(G)-1)=n+1-\Delta(G)$.
Let $G$ be a graph of order $n$ such that $L_2(G)=n+1-\Delta(G)$.
It is easily obtained that $G$ has the following properties:

$(P1)$ $|N[v_0]\cap B|=2$,

$(P2)$ $V(G)\setminus N[v_0]\subset B$.

By the above argument, we have $G \in \mathcal{G}$ with $N[v_0]=A_0$
and $B=B_0$. It remains to show the converse. Suppose that
$G \in \mathcal{G}$. It is sufficient to show that
$L_2(G)\geq n+1-\Delta(G)$. Let $A_0\cap B_0=\{v_p,v_q\}$ and
$|A_0|=t+1$, where $v_0$ is a vertex of degree $t$ in $G[A_0]$.
Observe that $d(v_0)\geq t$. Furthermore, we obtain the following claim.

\textbf{Claim 1.} $\Delta(G)=t$.

\noindent\textbf{Proof of Claim 1:} Since each vertex in $B_0$ has
degree at most $1$ in $G[B_0]$, it follows that each of $v_p,v_q$
is adjacent to at most one vertex in $B_0$. On the other hand, each
of $v_p,v_q$ is adjacent to at most $t-1$ vertices in
$A_0\setminus\{v_p,v_q\}$. Thus, $d(v_p)\leq t$ and $d(v_q)\leq t$.
For each vertex $v$ in $A_0\setminus\{v_p,v_q\}$, $v$ is adjacent to
at most $t-2$ vertices in $A_0\setminus\{v,v_p,v_q\}$ and at most two
vertices in $B_0$, thus $d(v)\leq t$ for each vertex $v$ in $A_0\setminus\{v_p,v_q\}$. For each vertex $u$ in $B_0\setminus\{v_p,v_q\}$,
$u$ is adjacent to at most $t-1$ vertices in $A_0\setminus\{v_p,v_q\}$
and at most one vertex in $B_0$, hence $d(u)\leq t$ for each vertex $u$ in $B_0\setminus\{v_p,v_q\}$. Thus $\Delta(G)\leq t$. But $d(v_0)\geq t$, which
means that $\Delta(G)=t$.

Notice that $B_0$ is a $2$-limited packing of $G$ with
$|B_0|=n-|A_0|+2=n+1-\Delta(G)$, then $L_2(G)\geq n+1-\Delta(G)$.
We complete the proof.
\end{proof}

\begin{cor}\label{coro1}
Let $G$ be a $d$-regular graph of order $n$ such that $L_k(G)= n+k-1-d$,
where $k\leq d$. Then $d\geq \frac{n}{2}$.
\end{cor}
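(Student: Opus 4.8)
The plan is to push the argument behind Theorem~\ref{th3} one step further. In that proof only the vertex of maximum degree is exploited, but in a $d$-regular graph \emph{every} vertex has degree $d$, and this extra freedom, combined with the equality hypothesis, turns the cardinality estimate into an equality at each vertex and forces a very rigid structure.

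First I would fix a maximum $k$-limited packing $B$ of $G$. By hypothesis $|B| = L_k(G) = n+k-1-d$, so $|\overline{B}| = n - |B| = d+1-k$; since $k \le d$ this quantity is at least $1$, so $\overline{B} \neq \emptyset$. (This is the only place the assumption $k \le d$ enters.) Next, for an arbitrary vertex $v$ of $G$, $d$-regularity gives $|N[v]| = d+1$, while the $k$-limited packing condition gives $|N[v]\cap B| \le k$; subtracting, $|N[v]\cap \overline{B}| \ge d+1-k = |\overline{B}|$. As $N[v]\cap\overline{B} \subseteq \overline{B}$, this forces equality, i.e.\ $\overline{B} \subseteq N[v]$, and this holds for \emph{every} $v \in V(G)$.

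Finally I would pick any $w \in \overline{B}$ (possible since $\overline{B}\neq\emptyset$). From $w \in N[v]$ for all $v$, it follows that $w$ is adjacent to every other vertex of $G$, so $d = d(w) = n-1$; hence $G \cong K_n$. Since $n-1 \ge n/2$ for all $n \ge 2$, the desired inequality $d \ge n/2$ follows. (In fact the argument shows that $K_n$ is the \emph{only} $d$-regular graph of order $n$ with $L_k(G)=n+k-1-d$ and $k\le d$, so the stated bound is far from sharp.)

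The main thing to get right — and essentially the whole content of the proof — is the middle step: recognizing that the count from Theorem~\ref{th3}, applied at an arbitrary vertex rather than at a fixed maximum-degree one, becomes tight under the equality hypothesis and thereby traps $\overline{B}$ inside every closed neighbourhood. Once that is established, locating a universal vertex and concluding $d=n-1$ is immediate.
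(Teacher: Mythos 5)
Your proof is correct, and it takes a genuinely different route from the paper's. The paper argues at a \emph{single} vertex: it treats $d=n-1$ separately, and for $d\le n-2$ it fixes one vertex $v$, shows $\overline{B}=N[v]\cap\overline{B}$ has exactly $d+1-k$ vertices, then brings in the set $U=V(G)\setminus N[v]$ (nonempty since $d\le n-2$), shows each vertex of $U$ is adjacent to all of $\overline{B}$, and finally counts the degree of a vertex of $\overline{B}$ (all of $U$ plus a neighbour in $N[v]$) to get $n-d-1+1\le d$, i.e.\ $d\ge \frac{n}{2}$. You instead exploit regularity to make the same counting argument tight at \emph{every} vertex: $|N[v]\cap\overline{B}|\ge d+1-k=|\overline{B}|$ forces $\overline{B}\subseteq N[v]$ for all $v$, so any $w\in\overline{B}$ (nonempty because $k\le d$) is a universal vertex, whence $d=n-1$, $G\cong K_n$, and $d\ge\frac{n}{2}$ follows trivially (here $n\ge 2$ since $d\ge k\ge 1$). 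Your argument is shorter, needs no case distinction, and proves the stronger structural fact that $K_n$ is the \emph{only} regular graph attaining $L_k(G)=n+k-1-d$ with $k\le d$ (and $K_n$ does attain it, as $L_k(K_n)=k$); in particular the paper's case $d\le n-2$ is vacuous, and your stronger conclusion would also serve, even more directly, in the contradiction step of Theorem~\ref{th10}.
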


\begin{proof}
If $d=n-1$, then $G$ is a complete graph with $L_k(G)=k$ for
$n\geq k+1\geq 2$, and the result follows from $d=n-1\geq \frac{n}{2}$.
Thus, we may assume that $d\leq n-2$. Suppose that $L_k(G)= n+k-1-d$.
Let $B$ be a maximum $k$-limited packing of $G$ with $|B|=n+k-1-d$,
and $v$ be a vertex of $G$. Since $|N[v]\cap B|\leq k$, it follows that
$|N[v]\cap \overline{B}|\geq d+1-k$. Assume that
$|N[v]\cap \overline{B}|> d+1-k$. Then $|B|<n-(d+1-k)=n+ k-1-d$,
a contradiction. Thus, there exist exactly $d+1-k$ vertices, say
$v_1, \ldots, v_{d+1-k}$, in $N[v]\cap \overline{B}$, furthermore,
$\overline{B}=\{v_1, \ldots, v_{d+1-k}\}$. Let $U=V(G)\setminus N[v]$.
Since $d\leq n-2$, it follows that $|U|> 0$. Observe that $U\subseteq B$.
Consider a vertex $u_i$ in $U$, there exist at most $k-1$ neighbours in $B$,
therefore $u_i$ is adjacent to at least $d-(k-1)$ vertices in $\overline{B}$.
But $|\overline{B}|=d+1-k$, it follows that for each vertex $u_i$ in $U$,
$u_i$ is adjacent to all the vertices in $\overline{B}$.
That is, each vertex $v_i$ in $\overline{B}$ is adjacent to
all the $n-d-1$ vertices in $U$. Note that $d(v_i)=d$ and $v_i$ has at least
one neighbour in $N[v]$, it follows that $n-d-1+1\leq d$.
Thus, we obtain that $d\geq \frac{n}{2}$.
\end{proof}

To end this section, we present the tight Nordhaus-Gaddum-type result for
$k$-limited packing numbers of graphs $G$ and $\overline{G}$ for $k\geq1$.
We first establish the tight Nordhaus-Gaddum-type lower bound for this parameter,
and characterize all the graphs obtaining this lower bound.

\begin{pro}\label{prop6}
If $G$ is a graph of order at least $k$, then $L_k(G)+L_k(\overline{G})\geq 2k$.
Moreover, $L_k(G)+L_k(\overline{G})=2k$ if and only if $G$ has one of the following
properties:

$(i)$ $G$ has exactly $k$ vertices,

$(ii)$ for each $(k+1)$-subset $X$ of $V(G)$,
$G[X]$ has maximum degree $k$ and there is a vertex outside $X$ such
that it is not adjacent to any vertex of $X$,
or
there is a vertex outside $X$ such that it is adjacent to all the vertices
of $X$ and $G[X]$ has an isolated vertex,
or
there are one vertex outside $X$ such that it is adjacent to all the vertices
of $X$ and another vertex outside $X$ such that it is not adjacent to any
vertex of $X$.
\end{pro}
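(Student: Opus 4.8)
The plan is to prove the inequality first and then characterise equality by splitting on the order $n$ of $G$. For the inequality, note that $\overline G$ also has order $n\ge k$, so $L_k(G)\ge k$ and $L_k(\overline G)\ge k$ both hold: when $n=k$ this is Proposition \ref{prop1}, when $n=k+1$ it is contained in Proposition \ref{prop2}, and when $n\ge k+2$ it is Proposition \ref{prop3}. Adding the two inequalities gives $L_k(G)+L_k(\overline G)\ge 2k$.

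For the equality case, I would first dispose of the two small orders. If $n=k$, then $L_k(G)=L_k(\overline G)=k$ by Proposition \ref{prop1}, so equality holds; this is precisely case $(i)$, and conversely $(i)$ obviously forces equality. If $n=k+1$, then by Proposition \ref{prop2} equality would require $\Delta(G)=k=n-1$ and $\Delta(\overline G)=k=n-1$ at the same time, that is, $G$ would have both a vertex adjacent to all the others and a vertex adjacent to none of them; since $n\ge 2$ this is impossible, so in fact $L_k(G)+L_k(\overline G)\ge 2k+1$ and equality fails. This is consistent with the statement, because when $n=k+1$ neither $(i)$ (which needs $n=k$) nor $(ii)$ (which needs a vertex outside a $(k+1)$-subset of $V(G)$, and there is none) can hold.

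It remains to handle $n\ge k+2$. Here $L_k(G),L_k(\overline G)\ge k$, so $L_k(G)+L_k(\overline G)=2k$ if and only if $L_k(G)=k$ and $L_k(\overline G)=k$. I would apply Theorem \ref{th1}$(iii)$ to $G$ and to $\overline G$, both of which have order $\ge k+2$. Using that $\overline G[X]$ is the complement of $G[X]$ within $X$, and that $N_{\overline G}(a)=V(G)\setminus N[a]$, the condition ``$\overline G[X]$ has maximum degree $k$'' becomes ``$G[X]$ has an isolated vertex'', while ``the $k+1$ vertices of $X$ have a common neighbour in $\overline G$'' becomes ``some vertex outside $X$ is not adjacent in $G$ to any vertex of $X$''. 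Hence $L_k(G)=L_k(\overline G)=k$ is equivalent to the following: for every $(k+1)$-subset $X$ of $V(G)$, both (a) $G[X]$ has maximum degree $k$ or some vertex outside $X$ is adjacent to all the vertices of $X$, and (b) $G[X]$ has an isolated vertex or some vertex outside $X$ is not adjacent to any vertex of $X$. Expanding the conjunction $(a)\wedge(b)$ into four cases, the case ``$G[X]$ has maximum degree $k$ and $G[X]$ has an isolated vertex'' cannot occur, since a vertex of degree $k$ in the $(k+1)$-vertex graph $G[X]$ must be adjacent to every other vertex of $X$ whereas $|X|=k+1\ge 2$; the three surviving cases are exactly the three alternatives listed in condition $(ii)$, which finishes the characterisation.

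I expect the only real obstacle to be this last step: getting the two complementation translations right, and noticing that a universal vertex and an isolated vertex are incompatible in $G[X]$, which is exactly what collapses the four cases of $(a)\wedge(b)$ down to the three displayed in $(ii)$.
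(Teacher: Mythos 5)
Your proof is correct and follows essentially the same route as the paper: the lower bound from Propositions \ref{prop1}--\ref{prop3}, the exclusion of order $k+1$ via Proposition \ref{prop2}, and for order at least $k+2$ an application of Theorem \ref{th1} to both $G$ and $\overline{G}$, using the key observation (stated in the paper as the impossibility of $G[X]$ and $\overline{G}[X]$ both having maximum degree $k$, which is exactly your universal-vertex/isolated-vertex incompatibility) to reduce the conjunction to the three alternatives in $(ii)$. Your write-up simply makes explicit the complementation translations that the paper leaves implicit.
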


\begin{proof}
Since it is impossible that $\Delta(G)=\Delta(\overline {G})=k$
for $|V(G)|=k+1$, it follows from Proposition \ref{prop2} that $L_k(G)+L_k(\overline{G})>2k$ for $|V(G)|=k+1$. And observe that
it is also impossible that for some $(k+1)$-subset $X$ of
$V(G)$, both $G[X]$ and $\overline{G}[X]$ has maximum degree $k$.
Thus, the result follows from Theorem \ref{th1}.
\end{proof}

The tight Nordhaus-Gaddum-type upper bounds for $k$-limited packing
numbers of graphs $G$ and $\overline{G}$ in the following theorem
are a generalization of Lemma \ref{lem2}.

\begin{thm}\label{th10}
Let $G$ be a graph of order $n$. Then
\begin{equation*}
L_k(G)+L_k(\overline{G})\leq
\left\{
  \begin{array}{ll}
    2n & \hbox{ if  $k\geq \max\{\Delta(G),\Delta(\overline {G})\}+1$,}\\
    n+2k-2 & \hbox{ if $ k \leq \min\{\Delta(G),\Delta(\overline {G})\}$, }\\
    2n-1 & \hbox{ otherwise. } \\
    \end{array}
\right.
\end{equation*}
Moreover, the upper bounds are tight.
\end{thm}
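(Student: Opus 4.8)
The plan is to treat the three cases separately, using the single-graph bound $L_k(H)\le |V(H)|+k-1-\Delta(H)$ from Theorem \ref{th3} as the workhorse, and to produce explicit extremal examples for tightness.

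\emph{Case $k\ge \max\{\Delta(G),\Delta(\overline G)\}+1$.} Here $\Delta(G)\le k-1$ and $\Delta(\overline G)\le k-1$, so by Remark 3 (the strengthening of Proposition \ref{prop1}) both $L_k(G)=n$ and $L_k(\overline G)=n$, giving $L_k(G)+L_k(\overline G)=2n$ exactly. So this case is immediate and the ``bound'' is attained on the nose whenever it applies; one should note when this case is non-vacuous, i.e. $n-1=\Delta(G)+\Delta(\overline G)$ forces $n\le 2k-1$, and exhibit e.g.\ $G=K_m$ with $m\le k$ (where $\overline G=\overline{K_m}$, both of maximum degree $<k$) to show the regime is nonempty.

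\emph{Case $k\le\min\{\Delta(G),\Delta(\overline G)\}$.} Apply Theorem \ref{th3} to both $G$ and $\overline G$: $L_k(G)\le n+k-1-\Delta(G)$ and $L_k(\overline G)\le n+k-1-\Delta(\overline G)$. Adding and using $\Delta(G)+\Delta(\overline G)\ge n-1$ (take a vertex $v$ realizing $\Delta(G)$; then $d_{\overline G}(v)=n-1-\Delta(G)$, and $\Delta(\overline G)\ge n-1-\Delta(G)$, so the sum of maximum degrees is at least $n-1$) yields $L_k(G)+L_k(\overline G)\le 2n+2k-2-(n-1)=n+2k-1$. This is off by one from the claimed $n+2k-2$, so the main obstacle is shaving that last unit: I expect to argue that equality $\Delta(G)+\Delta(\overline G)=n-1$ together with simultaneous equality in both applications of Theorem \ref{th3} is impossible, by a parity/structure argument — essentially, if $v$ achieves $\Delta(G)$ and simultaneously $\overline B$ (the deficient set for the $G$-packing at $v$) has size exactly $\Delta(G)+1-k$ while the analogous statement holds in $\overline G$ at a vertex $w$, one analyzes the two forced co-packings and derives a contradiction, perhaps by adapting the regularity argument in Corollary \ref{coro1}. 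For tightness I would look at $G=K_{k+1}\cup \overline{K_{n-k-1}}$-type constructions or at the extremal family from Lemma \ref{lem2} (the $k=2$ case), generalized.

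\emph{Case otherwise} (exactly one of $\Delta(G),\Delta(\overline G)$ is $\ge k$, say $\Delta(\overline G)\le k-1\le \Delta(G)$): then $L_k(\overline G)=n$ by Remark 3, while $L_k(G)\le n+k-1-\Delta(G)\le n-1$ since $\Delta(G)\ge k$. Hence $L_k(G)+L_k(\overline G)\le 2n-1$, and equality holds precisely when $\Delta(G)=k$, i.e.\ when $G$ has maximum degree exactly $k$ and $\overline G$ has maximum degree at most $k-1$; such graphs exist (for instance a suitable graph on few vertices where $v$ has degree $k$ and all complementary degrees are $<k$), establishing tightness. Throughout, the bookkeeping to confirm the three cases are exhaustive and that the ``otherwise'' case is exactly ``$\min<k<\max+1$, i.e.\ $\min\le k-1$ and $\max\ge k$'' is routine; the genuine difficulty is the $-1$ improvement in the middle case, for which I would either invoke a refinement of Theorem \ref{th3} or run the extremal analysis of Corollary \ref{coro1} in both $G$ and $\overline G$ at once to force a contradiction.
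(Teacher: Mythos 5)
Your case split and the use of Theorem \ref{th3} on both $G$ and $\overline{G}$ follow the paper's strategy, and your first and third cases are essentially sound, but the middle case has a genuine gap: the improvement from $n+2k-1$ to $n+2k-2$ is precisely the hard content of the theorem, and you leave it as ``I expect to argue \dots perhaps by adapting the regularity argument in Corollary \ref{coro1}.'' The missing argument is concrete and you would need it (or a substitute): if $L_k(G)+L_k(\overline{G})=n+2k-1$, then every inequality in your chain is tight; since $\Delta(G)+\delta(\overline{G})=n-1$ holds identically, tightness of $\Delta(G)+\Delta(\overline{G})\geq n-1$ forces $\Delta(\overline{G})=\delta(\overline{G})$, i.e.\ $\overline{G}$ (and hence $G$) is regular, and moreover $L_k(G)=n+k-1-\Delta(G)$ and $L_k(\overline{G})=n+k-1-\Delta(\overline{G})$. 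At that point Corollary \ref{coro1} applies verbatim to both graphs (it is stated exactly for regular graphs attaining equality in Theorem \ref{th3}), giving $\Delta(G)\geq \frac{n}{2}$ and $\Delta(\overline{G})\geq \frac{n}{2}$, which contradicts $\Delta(G)+\Delta(\overline{G})=n-1$. No adaptation or new parity/structure analysis is needed, but the observation that equality forces regularity is indispensable and is absent from your sketch; without it, Corollary \ref{coro1} cannot be invoked.

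Two smaller problems concern tightness. Your candidate $G=K_{k+1}\cup\overline{K_{n-k-1}}$ for the middle case gives $L_k(G)=n-1$ and $L_k(\overline{G})=k$ (every vertex of the clique part of $\overline{G}$ dominates all of $V(\overline{G})$), so the sum is $n+k-1$, which falls short of $n+2k-2$ as soon as $k\geq 2$; at $k=1$ it coincides with the paper's example $G=K_n-e$, $\overline{G}=K_2\cup(n-2)K_1$, which is the only value of $k$ for which an extremal example is actually exhibited. In the ``otherwise'' case, your claim that equality holds precisely when $\Delta(G)=k$ fails in the ``if'' direction: $G=C_4$ with $k=2$ has $\Delta(G)=2=k$ and $\Delta(\overline{G})=1\leq k-1$, yet $L_2(C_4)+L_2(\overline{C_4})=2+4=6<2n-1=7$. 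Only existence is needed there, so you should verify one concrete graph; for instance, a graph of order $k+1$ with an isolated vertex and $\Delta(G)<k$ satisfies $L_k(G)+L_k(\overline{G})=2n-1$ by Proposition \ref{prop2}.
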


\begin{proof}
Suppose that $k\geq \max\{\Delta(G),\Delta(\overline {G})\}+1$. It is clear that
$L_k(G)+L_k(\overline{G})=n+n=2n$.

Suppose that
$\max\{\Delta(G),\Delta(\overline {G})\}+1> k \geq \min\{\Delta(G),\Delta(\overline {G})\}+1$. Without loss of generality, we assume that
$\Delta(\overline {G})+1> k\geq \Delta(G)+1$. It follows
that $L_k(G)=n$ and $L_k(\overline{G})<n$.
Therefore, $L_k(G)+L_k(\overline{G})\leq 2n-1$.
To show that the upper bound is tight. Let $G$ be a graph of order $k+1$ with
$\Delta(G)<k$ such that $G$ has isolated vertices. By Proposition \ref{prop2},
$L_k(G)+L_k(\overline{G})=2n-1$.

It remains to consider the case when $ k \leq \min\{\Delta(G),\Delta(\overline {G})\}$.
By Theorem \ref{th3}, we have
$L_k(G)\leq n+k-1-\Delta(G)$ and $L_k(\overline{G})\leq n+k-1-\Delta(\overline{G})$.
Thus,
\begin{eqnarray*}
L_k(G)+L_k(\overline{G}) &\leq& (n+k-1-\Delta(G))+ (n+k-1-\Delta(\overline{G}))\\
&=& 2n+2k-2-(\Delta(G)+\Delta(\overline{G}))\\
&\leq& 2n+2k-2-(\Delta(G)+\delta(\overline{G}))\\
&=& 2n+2k-2-(n-1)\\
&=& n+2k-1.
\end{eqnarray*}
Next, we claim that it is impossible that $L_k(G)+L_k(\overline{G})= n+2k-1$.
Assume to the contrary that $L_k(G)+L_k(\overline{G})= n+2k-1$.
It follows that both $G$ and $\overline{G}$ are regular graphs with
$ L_k(G)= n+k-1-\Delta(G)$ and $ L_k(\overline{G})= n+k-1-\Delta(\overline{G})$. Notice that $G$ is a $\Delta(G)$-regular graph and $\overline{G}$ is a $\Delta(\overline{G})$-regular graph, then $\Delta(G)+\Delta(\overline{G})=n-1$. Since $ L_k(G)= n+k-1-\Delta(G)$ and $ L_k(\overline{G})= n+k-1-\Delta(\overline{G})$, it follows from Corollary \ref{coro1} that $\Delta(G)\geq \frac{n}{2}$ and $\Delta(\overline{G})\geq \frac{n}{2}$,
which implies that $\Delta(G)+\Delta(\overline{G})> n-1$,
which is a contradiction. Thus, $L_k(G)+L_k(\overline{G})\leq n+2k-2$.
The following examples show that the upper bound is best possible.
Let $G=K_n-e$, where $e$ is an edge of $K_n$ and $n\geq 3$.
Then $L_1(G)= 1$ by Theorem \ref{th1}. On the other side,
$\overline{G}=K_2\cup (n-2)K_1$, then $L_1(\overline{G})= n-1$.
It is obtained that $\min\{\Delta(G),\Delta(\overline {G})\}\geq 1$
and $L_1(G)+L_1(\overline{G})= n+2k-2=n$.
\end{proof}

\section{ $2$-limited packing}

In \cite{GGHR}, the authors bounded the $2$-limited packing number for
a graph in terms of its order.

\begin{lem}{\upshape\cite{GGHR}}\label{lem26}
If $G$ is a connected graph with $|V(G)|\geq3$,
then $L_2(G)\leq\frac{4}{5}|V(G)|$.
\end{lem}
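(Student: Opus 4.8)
The plan is to fix a maximum $2$-limited packing $B$ of $G$, so that $L_2(G)=|B|$, and to bound $|\overline{B}|$ from below by double counting the edges of the cut $(B,\overline{B})$. The first step is structural: since $B$ is a $2$-limited packing, every $v\in B$ has $|N[v]\cap B|\le 2$ and hence at most one neighbour in $B$, so $G[B]$ has maximum degree at most $1$; thus $G[B]$ is a disjoint union of, say, $a$ copies of $K_1$ and $b$ copies of $K_2$, with $L_2(G)=|B|=a+2b$ and $a+b$ components in total.

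Next I would estimate $e(B,\overline{B})$ from both sides. For the lower bound I use connectivity: each component $C$ of $G[B]$ sends at least one edge into $\overline{B}$. Indeed, if $C=\{v\}$ then $v$ has no neighbour in $B$ but $\deg_G(v)\ge 1$ (as $G$ is connected with $n\ge 3$), so $v$ has a neighbour in $\overline{B}$; and if $C=\{v,w\}$ had no edge to $\overline{B}$, then $\{v,w\}$ would be a whole connected component of $G$, impossible when $G$ is connected and $n\ge 3$. Since distinct components of $G[B]$ yield distinct cut edges, $e(B,\overline{B})\ge a+b$. For the upper bound, each $u\in\overline{B}$ satisfies $|N(u)\cap B|\le|N[u]\cap B|\le 2$ because $B$ is a $2$-limited packing, so summing over $\overline{B}$ gives $e(B,\overline{B})\le 2|\overline{B}|$. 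Combining the two estimates yields $a+b\le 2|\overline{B}|$.

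Finally, since $a\ge 0$ we have $b\le a+b\le 2|\overline{B}|$, so $L_2(G)=a+2b=(a+b)+b\le 4|\overline{B}|=4\bigl(n-L_2(G)\bigr)$, which rearranges to $L_2(G)\le\frac{4}{5}n$. There is no deep obstacle in this argument; the step that genuinely requires attention is the connectivity step, where the hypothesis $n\ge 3$ is essential — the inequality fails for a disjoint union of copies of $K_2$, which is precisely the configuration excluded by the $K_2$-component case. The elementary inequality $b\le a+b$ at the end is what produces the constant $\frac{4}{5}$, and one can check it is sharp on $P_5$, where $a=0$, $b=2$, $|\overline{B}|=1$, and every inequality above holds with equality.
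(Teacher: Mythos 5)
Your argument is correct. The paper itself gives no proof of this lemma: it is quoted from \cite{GGHR} as a known result, so there is nothing internal to compare against. Your edge-counting proof is sound and self-contained: the observation that $G[B]$ has maximum degree at most $1$ (hence splits into $a$ copies of $K_1$ and $b$ copies of $K_2$), the connectivity argument showing each of the $a+b$ components sends at least one edge into $\overline{B}$ (where $n\geq 3$ is exactly what rules out a $K_2$-component being all of $G$, and also guarantees isolated vertices of $G[B]$ have a neighbour outside $B$), and the bound $e(B,\overline{B})\leq 2|\overline{B}|$ coming from the packing condition at vertices of $\overline{B}$ all check out; combining them via $a+2b\leq 2(a+b)\leq 4|\overline{B}|=4(n-L_2(G))$ gives the stated bound, and your $P_5$ example confirms tightness, consistent with $L_2(P_5)=4$ from Lemma \ref{lem3}. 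The only cosmetic remark is that the degenerate case $\overline{B}=\emptyset$ deserves a word (it cannot occur, since $B=V(G)$ would force $\Delta(G)\leq 1$, impossible for a connected graph on at least $3$ vertices), but this is implicit in your component analysis and is not a gap.
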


Furthermore, they imposed constraints on the minimum degree of $G$,
and obtained the following result.

\begin{lem}{\upshape\cite{GGHR}}\label{le1}
If $G$ is a connected graph, and $\delta(G)\geq k$,
then $L_k(G)\leq\frac{k}{k+1}|V(G)|$.
\end{lem}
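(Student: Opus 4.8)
The plan is to bound the size of the complement of a maximum $k$-limited packing from below by a short double-counting argument on the edges joining the packing to its complement. Let $B$ be a maximum $k$-limited packing of $G$ and set $\overline{B}=V(G)\setminus B$. Since $|V(G)|=|B|+|\overline{B}|$, the inequality $L_k(G)=|B|\leq\frac{k}{k+1}|V(G)|$ is equivalent, after rearranging, to $|B|\leq k|\overline{B}|$, so it suffices to prove the latter.

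The first step is to observe that every vertex of $B$ has at least one neighbour in $\overline{B}$. Fix $b\in B$. Since $B$ is a $k$-limited packing, $|N[b]\cap B|\leq k$, and as $b\in N[b]\cap B$ this forces $|N(b)\cap B|\leq k-1$. Because $d(b)\geq\delta(G)\geq k$, we get $|N(b)\cap\overline{B}|=d(b)-|N(b)\cap B|\geq 1$, as claimed. In particular $\overline{B}\neq\emptyset$: otherwise $V(G)$ would be a $k$-limited packing, forcing $\Delta(G)\leq k-1<\delta(G)$, which is impossible since $G$ is nontrivial.

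The second step is to double count the set $E(B,\overline{B})$ of edges with one endpoint in $B$ and the other in $\overline{B}$. Counting from the $B$-side and using Step~1, $|E(B,\overline{B})|=\sum_{b\in B}|N(b)\cap\overline{B}|\geq|B|$. Counting from the $\overline{B}$-side, and using that $N[u]\cap B=N(u)\cap B$ for $u\in\overline{B}$ together with the packing constraint $|N[u]\cap B|\leq k$, we get $|E(B,\overline{B})|=\sum_{u\in\overline{B}}|N(u)\cap B|\leq k|\overline{B}|$. Combining the two bounds gives $|B|\leq k|\overline{B}|$, hence $|B|\leq\frac{k}{k+1}|V(G)|$.

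There is essentially no serious obstacle: the argument uses only $\delta(G)\geq k$ and the defining inequality of a $k$-limited packing, and in fact connectivity of $G$ is not needed for the bound itself. The only points requiring a little care are the identity $N[u]\cap B=N(u)\cap B$ for $u\notin B$, which is what allows the packing constraint at vertices of $\overline{B}$ to control the edge count, and the remark that $\overline{B}$ is nonempty so the final rearrangement is legitimate. Sharpness can be exhibited by disjoint unions of copies of $K_{k+1}$, where $\delta=k$ and $L_k=\frac{k}{k+1}n$.
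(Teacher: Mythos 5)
Your proof is correct. Note that the paper itself offers no proof of this statement: it is quoted as a known result from \cite{GGHR} (Lemma \ref{le1}), so there is nothing in the text to compare against line by line; your double-counting of the edges between a maximum $k$-limited packing $B$ and $\overline{B}$ is a sound, self-contained argument. Each step checks out: $|N(b)\cap B|\leq k-1$ for $b\in B$ together with $d(b)\geq\delta(G)\geq k$ gives every $b\in B$ a neighbour in $\overline{B}$, the $\overline{B}$-side count uses exactly the packing constraint via $N[u]\cap B=N(u)\cap B$ for $u\notin B$, and $|B|\leq k|\overline{B}|$ rearranges to the claimed bound (indeed without needing $\overline{B}\neq\emptyset$, and, as you observe, without connectivity). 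It is worth noting that within this paper the lemma also follows in one line from Lemma \ref{lem10}: since $\delta(G)\geq k$, one has $L_k(G)\leq \frac{k|V(G)|}{\delta(G)+1}\leq \frac{k}{k+1}|V(G)|$; your argument is essentially the standard proof of that degree-based bound specialised to $\delta(G)\geq k$. One small quibble: for tightness of the statement as phrased (for connected $G$) a single copy of $K_{k+1}$ suffices, since $L_k(K_{k+1})=k$; the disjoint union of copies witnesses tightness only for the version without the connectivity hypothesis.
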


By Lemma \ref{le1}, we have $L_2(G)\leq\frac{2}{3}|V(G)|$ for graphs with
$\delta(G)\geq 2$. It is known that trees are graphs with minimum degree $1$.
We find a class of trees $T$ with $2$-limited packing number at most
$\frac{2}{3}|V(T)|$. The minimum degree of a graph $G$ taken over all non-leaf
vertices is denoted by $\delta'(G)$.

\begin{thm}\label{thm30}
If $T$ is a tree with $\delta'(T)\geq 4$, then
$L_2(T)\leq\frac{2}{3}|V(T)|$.
\end{thm}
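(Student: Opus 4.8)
The plan is to deduce the bound from the general inequality $L_2(T)\le 2\gamma(T)$ (Lemma \ref{lem4}) together with a short degree count showing that the hypothesis $\delta'(T)\ge 4$ forces $T$ to have few non-leaf vertices.

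First I would set $n=|V(T)|$, let $L$ be the set of leaves of $T$ and $S=V(T)\setminus L$ the set of non-leaf (``internal'') vertices, and write $s=|S|$, $\ell=|L|$, so that $n=s+\ell$. We may assume $S\neq\emptyset$ (otherwise $T=K_2$, a case in which $\delta'(T)$ is not even defined and which the hypothesis is meant to exclude, since there $2=L_2(K_2)>\frac{4}{3}$). As $T$ is a tree, $\sum_{v\in V(T)}d(v)=2(n-1)$, and each leaf contributes exactly $1$ to this sum, so $\sum_{v\in S}d(v)=2(n-1)-\ell=2s+\ell-2$. On the other hand every $v\in S$ has $d(v)\ge\delta'(T)\ge 4$, hence $\sum_{v\in S}d(v)\ge 4s$. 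Comparing the two estimates gives $4s\le 2s+\ell-2$, i.e. $\ell\ge 2s+2$, and therefore $n=s+\ell\ge 3s+2$, that is $s\le\frac{n-2}{3}<\frac{n}{3}$.

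Next I would observe that $S$ is a dominating set of $T$: each vertex of $S$ dominates itself, and for a leaf $w$ its unique neighbour has degree at least $2$ (since $T\neq K_2$), hence lies in $S$ and dominates $w$. Thus $\gamma(T)\le s$. Feeding this into Lemma \ref{lem4} with $k=2$ yields
$$L_2(T)\le 2\gamma(T)\le 2s\le\frac{2(n-2)}{3}<\frac{2}{3}\,|V(T)|,$$
which is the claimed bound (in fact with a little room to spare).

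I do not expect a genuine obstacle: once Lemma \ref{lem4} is invoked the argument is elementary. The only points needing care are the degenerate possibility $S=\emptyset$ ($T=K_2$), which must be read as excluded, and the routine check that a leaf's neighbour is internal. If one prefers to avoid Lemma \ref{lem4}, the same conclusion can be obtained directly: for a maximum $2$-limited packing $B$, with $\overline{B}=V(T)\setminus B$, double-count the pairs $(v,u)$ with $v\in S$ and $u\in N[v]\cap\overline{B}$; the constraint $|N[v]\cap B|\le 2$ for $v\in S$ gives the lower bound $\sum_{v\in S}(d(v)-1)=n-2$, while the fact that the internal vertices of a tree induce a subtree (so they contribute only $2(s-1)$ internal--internal incidences) gives the upper bound $|\overline{B}|+2s-2$; hence $|\overline{B}|\ge n-2s$ and $L_2(T)=|B|\le 2s<\frac{2}{3}|V(T)|$ once more.
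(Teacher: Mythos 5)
Your proof is correct, but it follows a genuinely different route from the paper. The paper argues by induction on $|V(T)|$: it roots the tree, takes a deepest leaf, observes that its parent $v_0$ has at least three leaf neighbours (because $\delta'(T)\ge 4$), deletes those leaves, applies the induction hypothesis to the smaller tree, and uses $|N[v_0]\cap B|\le 2$ to see that at most two of the deleted leaves can lie in a $2$-limited packing. You instead give a direct counting argument: the handshake identity together with $\delta'(T)\ge 4$ forces the set $S$ of internal vertices to satisfy $|S|\le \frac{n-2}{3}$, the internal vertices form a dominating set (for $n\ge 3$), and Lemma \ref{lem4} then gives $L_2(T)\le 2\gamma(T)\le 2|S|\le \frac{2(n-2)}{3}<\frac{2}{3}n$. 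This avoids induction altogether, actually proves the slightly sharper bound $\frac{2(|V(T)|-2)}{3}$ (attained, e.g., by $K_{1,4}$, so your estimate is tight where the paper's is not), and makes transparent that the only role of $\delta'(T)\ge 4$ is to limit the number of internal vertices; your fallback double-count of pairs $(v,u)$ with $v\in S$, $u\in N[v]\cap\overline{B}$ even dispenses with Lemma \ref{lem4}, using only that $T[S]$ is a subtree with $|S|-1$ edges. Your explicit exclusion of the degenerate trees $K_1,K_2$ (no non-leaf vertices) matches the paper's implicit assumption that $\delta'(T)\ge4$ entails $|V(T)|\ge 5$, so there is no gap there.
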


\begin{proof}
Since $\delta'(T)\geq 4$, it follows that $|V(T)|\geq 5$. Let $B$ be a
maximum $2$-limited packing of $T.$ By induction on the order of $T$.
If $|V(T)|=5$, then $T=K_{1,4}$, and hence $L_2(T)=2\leq \frac{2}{3}|V(T)|$
by Lemma \ref{lem3}. Let $T$ be a tree of order at least $6$. It is known that
$T$ can be regarded as a rooted tree. Take a leaf vertex $v$ of $T$, which is
the lowest level in the rooted tree $T$. Let $v_0$ be the unique neighbour of
$v$ in $T$, and $L_0$ be the set of leaf vertices in $N(v_0)$. Since $v_0$ is
adjacent to at least three leaf vertices, we have $|L_0|\geq 3$. Let $T_0$ be
the subtree obtained from $T$ by deleting all the vertices of $L_0$. By the
inductive hypothesis, $L_2(T_0)\leq \frac{2|V(T_0)|}{3}\leq \frac{2(|V(T)|-3)}{3}=\frac{2}{3}|V(T)|-2.$
Since $|N[v_0]\cap B|\leq 2$, it follows that $|L_0\cap B|\leq 2$.
Hence, $L_2(T)\leq L_2(T_0)+2=\frac{2}{3}|V(T)|$.
\end{proof}

It is shown that both the opening packing number and the $1$-limited
packing number of a graph with diameter at most $2$ are small
in Remark $1$ and Lemma \ref{lem16}. These results naturally
lead to the following problem: can the $2$-limited packing number of
a graph $G$ be bounded by a constant for $diam(G)\leq 2$? It is known
that the graph with order $n$ and diameter $1$, which is exactly $K_n$,
has $2$-limited packing number $2$ by Lemma \ref{lem3}. Thus, we only
need to investigate the $2$-limited packing number of graphs with
diameter $2$. Theorem \ref{th6} answers the above question.

\begin{thm}\label{th6}
For any positive integer $a$ with $a\geq 2$, there exists a graph $G$
with $diam(G)=2$ such that $L_2(G)=a.$
\end{thm}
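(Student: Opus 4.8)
The plan is to exhibit, for every integer $a\ge 2$, an explicit graph $G_a$ with $\mathrm{diam}(G_a)=2$ and $L_2(G_a)=a$. (A restriction like $a\ge 2$ is unavoidable, since any graph of diameter $2$ has two nonadjacent vertices, and any two vertices form a $2$-limited packing.) I would define $G_a$ on the vertex set $\{v_1,\dots,v_a\}\cup\{t_{ij}: 1\le i<j\le a\}$, where $\{v_1,\dots,v_a\}$ is an independent set, the $\binom{a}{2}$ vertices $t_{ij}$ induce a complete graph, and each $v_i$ is joined precisely to those $t_{pq}$ with $i\in\{p,q\}$. Since $v_1\not\sim v_2$ we have $\mathrm{diam}(G_a)\ge 2$; for the reverse inequality note that $v_i$ and $v_j$ have the common neighbour $t_{ij}$, that $v_i$ reaches any $t_{pq}$ with $i\notin\{p,q\}$ along $v_i t_{ip} t_{pq}$, and that the $t$-vertices are pairwise adjacent. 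Hence $\mathrm{diam}(G_a)=2$. (When $a=2$ the graph $G_2$ is just $P_3$.)

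Next I would check $L_2(G_a)\ge a$ by verifying that $B_0=\{v_1,\dots,v_a\}$ is a $2$-limited packing: indeed $|N[v_i]\cap B_0|=1$ for each $i$, and $|N[t_{pq}]\cap B_0|=|\{v_p,v_q\}|=2$ for each pair $\{p,q\}$.

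For the upper bound $L_2(G_a)\le a$, let $B$ be any $2$-limited packing of $G_a$, write $T=\{t_{ij}\}$, and argue by cases on $m=|B\cap T|$. If $m\ge 3$, pick $t\in B\cap T$; as $t$ is adjacent to every other vertex of $T$, we get $|N[t]\cap B|\ge m\ge 3$, a contradiction. If $m=0$ then $B\subseteq\{v_1,\dots,v_a\}$, so $|B|\le a$. If $m=1$, say $t_{ij}\in B$, then for every pair $\{p,q\}$ the vertex $t_{pq}$ is either $t_{ij}$ itself or adjacent to it, so $t_{ij}\in N[t_{pq}]$ and hence $|\{v_p,v_q\}\cap B|\le 1$; since this holds for all pairs, $|B\cap\{v_1,\dots,v_a\}|\le 1$ and $|B|\le 2\le a$.

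The remaining case $m=2$ is the one requiring care. Suppose $B\cap T=\{t_{ij},t_{kl}\}$; since these two vertices are adjacent, applying the packing condition at $t_{ij}$ forces $v_i,v_j\notin B$ and at $t_{kl}$ forces $v_k,v_l\notin B$. If some $v_m\in B$ with $m\notin\{i,j,k,l\}$, then $t_{mi}$ is a vertex of $T$ different from both $t_{ij}$ and $t_{kl}$, adjacent to both of them (all $t$-vertices being mutually adjacent) and adjacent to $v_m$, so $|N[t_{mi}]\cap B|\ge 3$, a contradiction. Therefore $B\cap\{v_1,\dots,v_a\}=\emptyset$ and $|B|=2\le a$. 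In every case $|B|\le a$, so $L_2(G_a)=a$. The only subtle point is the bookkeeping in the case $m=2$, where one must notice that the conclusion is unaffected by whether the pairs $\{i,j\}$ and $\{k,l\}$ share an index.
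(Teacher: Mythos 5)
Your construction is exactly the graph used in the paper (an independent set of size $a$ together with a clique on $\binom{a}{2}$ ``pair'' vertices, each pair of independent vertices having a unique common neighbour in the clique), with the same diameter check and the same lower-bound verification, and your argument is correct. The only difference is the upper bound: the paper obtains $L_2(G)\le a$ immediately from its bound $L_2(G)\le |V(G)|+1-\Delta(G)$ (Corollary \ref{cor14}), whereas you give a self-contained case analysis on $|B\cap T|$, which is slightly longer but equally valid.
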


\begin{proof}
We construct a graph $G$ with $diam(G)=2$ such that $L_2(G)=a$
for $a\geq 2$ as follows.

First, suppose that $X=\{x_1,x_2,\ldots, x_a\}$ and
$Y=\{y_1,y_2,\ldots, y_{\frac{a(a-1)}{2}}\}$ with $X\cap Y=\emptyset$.
Let $G$ be a graph with $V(G)=X\cup Y$ such that $G[X]$ consists of
$a$ isolated vertices, $G[Y]$ is a clique and each pair of distinct
vertices in $X$ has a unique common neighbour in $Y$. Obviously,
it is true that $diam(G)=2$. Now we need to show that $L_2(G)=a$.
Notice that $|V(G)|=a+\frac{a(a-1)}{2}$ and $\Delta(G)=\frac{a(a-1)}{2}+1$,
then $L_2(G)\leq |V(G)|+1-\Delta(G)=a$ by Corollary \ref{cor14}.
Observe that $X$ is a $2$-limited packing of $G$, thus $L_2(G)= a$.
\end{proof}

But we can find graphs $G$ with $diam(G)=2$ such that $L_2(G)$ is small.
First, we give some auxiliary lemmas.

\begin{lem}{\upshape\cite{JP}}\label{lem20}
Every planar graph of diameter $2$ has domination number at most $2$
except for the graph $F$ of Fig. $1$ which has domination number $3$.
\end{lem}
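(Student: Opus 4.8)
The plan is to take a planar graph $G$ with $diam(G)=2$ and show that, unless $G$ is the exceptional graph $F$ of Fig.~$1$, two vertices suffice to dominate $G$. Write $n=|V(G)|$. If $G$ has a vertex adjacent to all the others then $\gamma(G)=1$, so we may assume $\Delta(G)\le n-2$ and pick a vertex $v$ of maximum degree. Put $A=N(v)$ and $B=V(G)\setminus N[v]$; then $B\neq\emptyset$, and since $diam(G)=2$ every vertex of $B$ has a neighbour in $A$. The single vertex $v$ already dominates $N[v]$, so it suffices to find one vertex $w$ with $B\subseteq N[w]$, because then $\{v,w\}$ dominates $G$. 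Assume for contradiction that no vertex of $G$ dominates $B$; in particular $\gamma(G)\ge 3$.

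First I would translate the problem into one about the incidence structure between $A$ and $B$. For $a\in A$ write $\overline{N}(a)=B\setminus N(a)$ for the set of vertices of $B$ missed by $a$; by assumption $\overline{N}(a)\neq\emptyset$ for every $a\in A$, and likewise $B\not\subseteq N[b]$ for every $b\in B$. The key is a planarity obstruction routed through $v$: if there were two vertices $b_1,b_2\in B$ together with three vertices $a_1,a_2,a_3\in A$ such that each $a_i$ is adjacent to both $b_1$ and $b_2$, then, using that $v$ is adjacent to all of $a_1,a_2,a_3$, the parts $\{v,b_1,b_2\}$ and $\{a_1,a_2,a_3\}$ would span a subgraph isomorphic to $K_{3,3}$, contradicting planarity; more generally, any sufficiently rich common-neighbourhood pattern between $A$ and $B$ (after routing through $v$) yields a $K_5$- or $K_{3,3}$-minor. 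Using this I would bound how the missed sets $\overline{N}(a)$ can overlap and deduce that $|A|$, $|B|$, and the edges among them are severely constrained.

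The remaining step is a finite case analysis on $|A|$ and $|B|$ and on the adjacencies within $A$, within $B$, and between the two parts, always exploiting that $diam(G)=2$ forces many distance-two connections while planarity forbids the configurations above. In each surviving case one either exhibits a dominating set of size $2$, contradicting $\gamma(G)\ge 3$, or matches the forced adjacency pattern against the description of $F$ in Fig.~$1$ and recognizes $G=F$. This shows $F$ is the only exception.

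The step I expect to be the main obstacle is this case analysis: pinning down exactly the graph $F$ means ruling out every other configuration that is simultaneously planar, of diameter $2$, and without a dominating pair, and the number of local attachment patterns (how the vertices of $B$ hang off $A$, together with the internal edges of $A$ and of $B$) is large. Organizing it cleanly — first bounding $|B|$ by a planarity-and-counting argument, then $|A|$, then enumerating the finitely many surviving attachment patterns — is where the real work lies; the $K_{3,3}$/$K_5$ subdivisions through $v$ are the device that keeps the enumeration finite.
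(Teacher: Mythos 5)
This lemma is quoted by the paper from the literature (it is the cited result \cite{JP}; the paper supplies no proof of its own), so what you have written has to stand on its own as a complete argument --- and it does not. What you give is a plan: after the correct but easy observation that two vertices of $B$ cannot have three common neighbours in $A$ (the $K_{3,3}$ through $v$), every substantive step is only announced. ``Any sufficiently rich common-neighbourhood pattern yields a $K_5$- or $K_{3,3}$-minor,'' ``deduce that $|A|$, $|B|$ and the edges among them are severely constrained,'' and the entire ``finite case analysis'' that is supposed to either produce a dominating pair or recognize $F$ are never carried out, and you say yourself that this is where the real work lies. That work \emph{is} the theorem: the known proofs (that planar diameter-$2$ graphs have domination number at most $3$, and that the $9$-vertex graph $F$ is the unique one attaining $3$) each occupy a substantial published argument, and nothing in your sketch shows that the enumeration is finite, let alone completes it or exhibits why $F$ is the only survivor. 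In particular, the $K_{3,3}$ obstruction alone does not bound $|B|$ or the attachment patterns, since distinct vertices of $B$ may attach to disjoint pairs of vertices of $A$ and may also see each other through common neighbours inside $B$, a case your sketch never addresses.

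There are also two logical slips in the setup. First, from ``no vertex $w$ satisfies $B\subseteq N[w]$'' you conclude ``in particular $\gamma(G)\ge 3$''; the implication goes the other way, since a dominating pair need not contain your chosen maximum-degree vertex $v$. Second, and relatedly, your framework has only two exits --- find $w$ dominating $B$, or conclude $G=F$ --- so even for graphs with $\gamma(G)=2$ whose dominating pairs avoid every maximum-degree vertex (a possibility you never exclude), the argument as structured would have to misclassify them as $F$ or get stuck. Both issues could be repaired inside a genuine case analysis, but as it stands the proposal establishes neither the bound $\gamma(G)\le 2$ for non-exceptional graphs nor the uniqueness of the exception $F$.
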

\begin{figure}[htbp]
\centering
\includegraphics[width=0.2\textwidth]{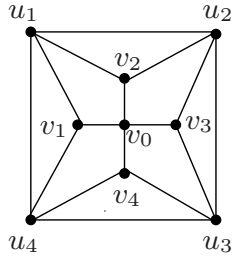}
\caption{A counterexample $F$ of Lemma \ref{lem20}}
\end{figure}

\begin{lem}\label{lem21}
If $G$ is a graph with order $n$ and $\Delta(G)=n-1$, then $L_2(G)=2$.
\end{lem}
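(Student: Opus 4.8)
The plan is to prove the two inequalities $L_2(G)\ge 2$ and $L_2(G)\le 2$ separately, both of which are immediate once we exploit that $\Delta(G)=n-1$ forces the existence of a \emph{universal} vertex.

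For the lower bound, first note that $n-1=\Delta(G)\ge 1$, so $n\ge 2$ and $V(G)$ contains at least two vertices. Let $B$ be any $2$-element subset of $V(G)$. Then for every vertex $v$ of $G$ we trivially have $|N[v]\cap B|\le |B|=2$, so $B$ is a $2$-limited packing of $G$, and therefore $L_2(G)\ge |B|=2$. (In fact this argument shows $L_2(H)\ge 2$ for every graph $H$ of order at least $2$.)

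For the upper bound, let $v_0$ be a vertex of degree $\Delta(G)=n-1$, so that $N[v_0]=V(G)$. If $B$ is any $2$-limited packing of $G$, then $|B|=|V(G)\cap B|=|N[v_0]\cap B|\le 2$; hence $L_2(G)\le 2$. Alternatively, one may simply invoke Corollary~\ref{cor14}, which gives $L_2(G)\le n+1-\Delta(G)=n+1-(n-1)=2$. Combining the two bounds yields $L_2(G)=2$.

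There is really no hard step here: the statement is an immediate consequence of the presence of a universal vertex together with Corollary~\ref{cor14} (or a one-line direct argument). The only minor point worth being careful about is checking that the order is large enough to accommodate a $2$-element packing, which is guaranteed by $n\ge 2$.
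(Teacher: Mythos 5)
Your proof is correct. The upper bound is in substance the same as the paper's: the paper simply cites Corollary~\ref{cor14}, which gives $L_2(G)\le n+1-\Delta(G)=2$, and your direct argument with the universal vertex $v_0$ (that $N[v_0]=V(G)$ forces $|B|=|N[v_0]\cap B|\le 2$) is exactly the mechanism behind that corollary. Where you diverge is the lower bound: the paper invokes Lemma~\ref{lem7}, getting $L_2(G)\ge\lceil (2+2\,diam(G))/3\rceil\ge 2$, which tacitly uses that $G$ is connected (true here because of the universal vertex) and that $diam(G)\ge 1$; you instead observe that \emph{any} two-element vertex subset is automatically a $2$-limited packing, since $|N[v]\cap B|\le|B|=2$ for every $v$. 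Your version is more elementary and more general (it gives $L_2(H)\ge 2$ for every graph $H$ of order at least $2$, with no connectivity hypothesis), while the paper's version reuses machinery already stated. One small nitpick: you justify $n\ge 2$ by asserting $\Delta(G)\ge 1$, which is not itself argued; strictly speaking one should appeal to the paper's standing convention that all graphs are nontrivial (otherwise $K_1$ with $\Delta=0=n-1$ would be a counterexample to the lemma itself), but this does not affect the substance of your proof.
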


\begin{proof}
By Lemma \ref{lem7}, we have
$L_2(G)\geq \lceil \frac{2diam(G)+2}{3}\rceil \geq \lceil\frac{4}{3}\rceil=2$.
On the other hand, $L_2(G)\leq n-\Delta(G)+1=2$ by Corollary \ref{cor14}.
Thus, $L_2(G)=2$.
\end{proof}

\begin{lem}\label{lem22}
Let $G$ be a graph with diameter $2$. Then

$(i)$ if $G$ has a cut vertex, then $L_2(G)=2$,

$(ii)$ if $G$ is a planar graph, then $L_2(G)\leq 4$.
\end{lem}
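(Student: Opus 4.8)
The plan is to handle the two parts separately, in each case reducing quickly to a result already available in Sections~2 and~3.

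For part $(i)$, I would first show that diameter $2$ together with the existence of a cut vertex forces that cut vertex to be universal. Let $v$ be a cut vertex, so $G-v$ has components $C_1,\ldots,C_m$ with $m\geq 2$, and write $n=|V(G)|$ (note $n\geq 3$ since $diam(G)=2$). For $x\in C_i$ and $y\in C_j$ with $i\neq j$, every $x$--$y$ walk passes through $v$; a walk of length at most $2$ cannot have length $1$ (different components) and, if of length $2$, its internal vertex lies in $N(x)$ and in $N(y)$, hence in the same component as both $x$ and $y$, which is impossible unless that vertex is $v$. So $d(x,y)\leq 2$ gives $xv,vy\in E(G)$. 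Since every vertex lies in some component $C_\ell$, every vertex distinct from $v$ is adjacent to $v$, i.e. $\Delta(G)=n-1$. Lemma~\ref{lem21} then yields $L_2(G)=2$.

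For part $(ii)$, I would start from Lemma~\ref{lem4}, which gives $L_2(G)\leq 2\gamma(G)$. By Lemma~\ref{lem20}, every planar graph of diameter $2$ has $\gamma(G)\leq 2$ with the single exception of the graph $F$ of Figure~1. Hence for $G\neq F$ we get $L_2(G)\leq 4$ at once. The only remaining case is $G=F$, where $2\gamma(F)=6$ is not sharp enough, so $L_2(F)\leq 4$ must be checked directly. I would do this either via Corollary~\ref{cor14} --- if $F$ has a vertex of degree at least $|V(F)|-3$, then $L_2(F)\leq |V(F)|+1-\Delta(F)\leq 4$ immediately --- or, if no such vertex exists, by arguing from the explicit adjacency structure of $F$ that every $5$-subset of $V(F)$ contains a vertex whose closed neighbourhood meets it in at least $3$ vertices, which rules out a $2$-limited packing of size $5$.

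I expect the exceptional graph $F$ in part $(ii)$ to be the only real obstacle; everything else is a one-line consequence of the cited lemmas. The care needed there is just to confirm that the chosen argument (the degree estimate through Corollary~\ref{cor14}, or the exhaustive $5$-set verification) actually goes through for the specific graph drawn in Figure~1.
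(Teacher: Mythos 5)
Your part $(i)$ is exactly the paper's argument: in a diameter-$2$ graph a cut vertex must be universal, and then Lemma~\ref{lem21} gives $L_2(G)=2$; this part is complete and correct. The main line of part $(ii)$ also coincides with the paper's: $L_2(G)\leq 2\gamma(G)\leq 4$ by Lemma~\ref{lem4} and Lemma~\ref{lem20}, leaving only the exceptional graph $F$ of Fig.~1 to be checked separately.

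That exceptional graph is the one genuine loose end, and there your first fallback provably fails. The graph $F$ has $9$ vertices, and since $diam(F)=2$ and $\gamma(F)=3$ it cannot contain a vertex of degree at least $|V(F)|-3=6$: such a vertex $w$ would leave at most two vertices outside $N[w]$, and those two are adjacent or share a neighbour (diameter $2$), which would give a dominating set of size $2$. Hence $\Delta(F)\leq 5$, and Corollary~\ref{cor14} only yields $L_2(F)\leq |V(F)|+1-\Delta(F)\geq 5$, not $4$. Your second fallback is the right kind of finite check, but as stated it is stronger than what is required: to rule out a $5$-set $X$ as a $2$-limited packing you need some vertex $v\in V(F)$, not necessarily belonging to $X$, with $|N[v]\cap X|\geq 3$. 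The paper closes the case with exactly such an observation: writing $V(F)=\{u_1,\ldots,u_4\}\cup\{v_0,\ldots,v_4\}$ as in Fig.~1, every $2$-limited packing $B$ satisfies $|\{u_1,\ldots,u_4\}\cap B|\leq 2$ and $|\{v_0,\ldots,v_4\}\cap B|\leq 2$, whence $L_2(F)\leq 4$. So your outline matches the paper and is sound, but the case $G=F$ is not actually settled by what you wrote; you need the partition-type check above (or an equivalent exhaustive verification with the witness allowed outside the candidate set).
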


\begin{proof}
Firstly we prove part $(i)$. Let $v_0$ be a cut vertex of $G$.
We claim that for any vertex $u\in V(G)\setminus \{v_0\}$, $d(u,v_0)=1$.
Suppose that there exists a vertex $u_0\in V(G)\setminus \{v_0\}$
such that $d(u_0,v_0)=2$. Let $w_0$ be another vertex of $G$ such that
$w_0$ and $u_0$ are contained in different components of $G-v_0$. Then
$d(u_0,w_0)\geq 3$, which contradicts to $diam(G)=2$. Thus,
for any vertex $u\in V(G)\setminus \{v_0\}$, $d(u,v_0)=1$. It is obtained
that $d(v_0)=|V(G)|-1$, hence $L_2(G)=2$ by Lemma \ref{lem21}.

Next we prove part $(ii)$. Let $G$ be a planar graph with diameter $2$.
Then it follows from Lemma \ref{lem4} and Lemma \ref{lem20} that
$L_2(G)\leq 2\gamma(G)\leq 4$ except for the graph $F$ of Fig. $1$.
It remains to verify the graph $F$ in Fig. $1$. Let $B$ be a maximum
$2$-limited packing of $F$. Observe that $|\{u_1,\ldots,u_4\}\cap B|\leq 2$
and $|\{v_0,\ldots,v_4\}\cap B|\leq 2$, it follows that $L_2(F)\leq 4$.
\end{proof}

Next, we get the better upper bound of the $2$-limited
packing number of graphs with large diameter.

\begin{thm}\label{th9}
If $G$ is a connected graph of order $n$, then
$L_2(G)\leq n+1-\Delta(G)-\lfloor \frac{diam(G)-4}{3} \rfloor$.
\end{thm}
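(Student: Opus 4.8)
The plan is to refine the counting that proves Corollary \ref{cor14} by simultaneously exploiting a maximum-degree vertex and a diametral path. Fix a maximum $2$-limited packing $B$ of $G$, so that $|B|=L_2(G)$ and $|\overline B|=n-L_2(G)$; it then suffices to produce at least $\Delta(G)-1+\lfloor (diam(G)-4)/3\rfloor$ vertices of $\overline B$. Choose a vertex $v_0$ of degree $\Delta(G)$; since $|N[v_0]\cap B|\le 2$, at least $\Delta(G)-1$ vertices of $N[v_0]$ lie in $\overline B$. Let $P=u_0u_1\cdots u_d$ be a shortest path joining two vertices at distance $diam(G)$, where $d:=diam(G)$.

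Next I would isolate two elementary facts about how $P$ interacts with $B$ and with $v_0$. (i) Since $P$ is a shortest path, $v_0$ cannot provide a shortcut, so whenever $u_i,u_j\in N[v_0]$ we have $|i-j|\le 2$; hence, if $N[v_0]\cap V(P)\ne\emptyset$, the integers $a:=\min\{i:u_i\in N[v_0]\}$ and $b:=\max\{i:u_i\in N[v_0]\}$ satisfy $b-a\le 2$. (ii) For any set of $m$ consecutive vertices of $P$, the restriction of $B$ to them is a $2$-limited packing of the path $P_m$, so by Lemma \ref{lem3} it contains at most $\lceil 2m/3\rceil$ of them, leaving at least $\lfloor m/3\rfloor$ of them in $\overline B$. (If $N[v_0]\cap V(P)=\emptyset$ the bound is immediate: then $N[v_0]\cap\overline B$ and $V(P)\cap\overline B$ are disjoint, and $|V(P)\cap\overline B|\ge\lfloor (d+1)/3\rfloor\ge\lfloor (d-4)/3\rfloor$.)

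Assuming now $N[v_0]\cap V(P)\ne\emptyset$, consider the three sets $N[v_0]\cap\overline B$, $\{u_0,\dots,u_{a-1}\}\cap\overline B$, and $\{u_{b+1},\dots,u_d\}\cap\overline B$. They are pairwise disjoint — the first is disjoint from the other two by minimality of $a$ and maximality of $b$, and the last two because $a\le b$ — and by (i) and (ii) their sizes are at least $\Delta(G)-1$, $\lfloor a/3\rfloor$, and $\lfloor (d-b)/3\rfloor$ respectively. Thus $|\overline B|\ge \Delta(G)-1+\lfloor a/3\rfloor+\lfloor (d-b)/3\rfloor$. Finally I would invoke the elementary inequality $\lfloor s/3\rfloor+\lfloor t/3\rfloor\ge\lfloor (s+t-2)/3\rfloor$, valid for all integers $s,t\ge 0$ (immediate by writing $s$ and $t$ modulo $3$): taking $s=a$, $t=d-b$, and using that $b-a\le 2$ forces $s+t-2=d-(b-a)-2\ge d-4$, this yields $\lfloor a/3\rfloor+\lfloor (d-b)/3\rfloor\ge\lfloor (d-4)/3\rfloor$, hence $|\overline B|\ge\Delta(G)-1+\lfloor (d-4)/3\rfloor$, and therefore $L_2(G)=n-|\overline B|\le n+1-\Delta(G)-\lfloor (d-4)/3\rfloor$.

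The step I expect to be the real obstacle is making this last piece of arithmetic land exactly. The crude estimate $\lfloor s/3\rfloor+\lfloor t/3\rfloor\ge\lfloor (s+t)/3\rfloor-1$ is off by one precisely when $diam(G)\equiv 1\pmod 3$ and $b-a=2$; what saves the bound is that in that regime $s+t=d-2\equiv 2\pmod 3$, and when $s+t\equiv 2\pmod 3$ one has $\lfloor s/3\rfloor+\lfloor t/3\rfloor=\lfloor (s+t)/3\rfloor$ for every split, so the sharper inequality is an equality there. One should also run a quick check of the degenerate cases — empty segments, $a\in\{0,1,2\}$, and $diam(G)\le 6$ (where $\lfloor (diam(G)-4)/3\rfloor\le 0$ and the statement is already implied by Corollary \ref{cor14}) — but these are routine.
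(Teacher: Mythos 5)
Your proposal is correct and follows essentially the same route as the paper's proof: fix a maximum $2$-limited packing $B$, take a vertex $v_0$ of maximum degree contributing at least $\Delta(G)-1$ vertices to $\overline{B}$, and split a diametral path at the (index-span at most $2$) vertices meeting $N[v_0]$, counting at least $\lfloor \cdot/3\rfloor$ vertices of $\overline{B}$ on each remaining segment via Lemma \ref{lem3} and the floor inequality $\lfloor s/3\rfloor+\lfloor t/3\rfloor\geq\lfloor (s+t-2)/3\rfloor$. Your bookkeeping with the indices $a$ and $b$ is in fact slightly cleaner than the paper's deletion of $N[u]\cap V(P)$, which tacitly ignores a possible stray middle component, but the argument is the same.
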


\begin{proof}
If $diam(G)\leq 2$, then $L_2(G)\leq |V(G)|+1-\Delta(G)\leq n+1-\Delta(G)
-\lfloor \frac{diam(G)-4}{3} \rfloor $ by Corollary \ref{cor14}.
Thus, we may assume that $diam(G)\geq 3$ in the following.
Let $B$ be a maximum $2$-limited packing in $G$, and $u$ be a vertex
of degree $\Delta(G)$. Then $|N[u]\cap B|\leq 2$. Let $P$ be a path
of length $diam(G)$ between $x$ and $y$ in $G$. We claim that
$|V(P)\cap N[u]|\leq3$, otherwise using the same argument in proof of
Theorem \ref{th5}, we can find a path between $x$ and $y$, whose length
is less than $diam(G)$, a contradiction. It is also obtained that
$|\{x,y\}\cap N[u]|\leq 1$, otherwise $d(x,y)\leq 2$, which contradicts
to $diam(G)\geq 3$. Without loss of generality, assume that
$x\in V(G)\setminus N[u]$.

\textbf{Case 1.} $V(P)\cap N[u]=\emptyset$.

By Lemma \ref{lem3},
$P$ has at most $\lceil\frac{2|V(P)|}{3}\rceil $ vertices in
$B$. Then $|V(P)\cap \overline {B}|\geq \lfloor\frac{|V(P)|}{3}\rfloor$.
On the other hand, $|N[u]\cap \overline {B}|\geq \Delta(G)+1-2=\Delta(G)-1$.
Thus,
\begin{eqnarray}
 | \overline {B}|
          &\geq & \Delta(G)-1+\lfloor\frac{|V(P)|}{3}\rfloor \nonumber\\
          &=&     \Delta(G)-1+\lfloor \frac{diam(G)+1}{3} \rfloor\nonumber.
\end{eqnarray}

\textbf{Case 2.}  $V(P)\cap N[u]\neq\emptyset$.

Let $P_x$, $P_y$ be the paths obtained from $P$ by deleting all
the vertices in $N[u]$ such that $P_x$ and $P_y$ contain $x$, $y$,
respectively. Let $H=P_x\cup P_y$.
It is worth mentioning that $V(P_y)= \emptyset$ if $y \in N[u]$.
Observe that $|V(H)|=|V(P_x)|+|V(P_y)|\geq |V(P)|-3 = diam(G)-2$.
Since $H$ has at most
$\lceil\frac{2|V(P_x)|}{3}\rceil+\lceil\frac{2|V(P_y)|}{3} \rceil$
vertices in $B$ by Lemma \ref{lem3}, it follows that
$|V(H)\cap \overline{B}|\geq \lfloor\frac{|V(P_x)|}{3}\rfloor+\lfloor\frac{|V(P_y)|}{3} \rfloor$.
Since $|N[u]\cap \overline {B}|\geq \Delta(G)+1-2=\Delta(G)-1$ and
$V(H)\cap N[u]=\emptyset$, we have
\begin{eqnarray}
 | \overline{B}|
          &\geq & \Delta(G)-1+\lfloor\frac{|V(P_x)|}{3}\rfloor+\lfloor\frac{|V(P_y)|}{3} \rfloor \nonumber\\
          &\geq&  \Delta(G)-1+\lfloor \frac{|V(H)|-2}{3} \rfloor\nonumber\\
          &\geq&     \Delta(G)-1+\lfloor \frac{diam(G)-4}{3} \rfloor
\end{eqnarray}
Combining Case $1$ and Case $2$, we have
$| \overline{B}|\geq \Delta(G)-1+\lfloor \frac{diam(G)-4}{3} \rfloor$.
Hence, $L_2(G)=|B|\leq n+1-\Delta(G)-\lfloor \frac{diam(G)-4}{3} \rfloor$.

\noindent\textbf{Remark 4.}
The upper bound in Theorem \ref{th9} is better than
that in Corollary \ref{cor14} for $diam(G)\geq 7$.
\end{proof}

\section{ Comparing $L_2(T)$ with $L_1(T)$ and $\rho^{0}(T)$}

In this section, we study the relationship among the $2$-limited
packing number, the $1$-limited packing number and the open packing
number of trees.

\begin{lem}{\upshape\cite{HHS}}\label{lem27}
 For any graph $G$, $L_1(G)\leq \rho^{0}(G)\leq 2L_1(G)$.
\end{lem}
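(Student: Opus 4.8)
The plan is to establish the two inequalities $L_1(G)\le \rho^0(G)$ and $\rho^0(G)\le 2L_1(G)$ separately; recall throughout that $L_1(G)=\rho(G)$, the (2-)packing number.

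For the lower bound $L_1(G)\le \rho^0(G)$ I would argue that every $1$-limited packing is an open packing. Indeed, if $B$ satisfies $|N[v]\cap B|\le 1$ for every $v$, then since $N(v)\subseteq N[v]$ we get $|N(v)\cap B|\le 1$ for every $v$, so $B$ is an open packing. Taking $B$ to be a maximum $1$-limited packing gives $L_1(G)=|B|\le \rho^0(G)$. This step is immediate.

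For the upper bound $\rho^0(G)\le 2L_1(G)$ I would start from a maximum open packing $S$ and record two structural facts. First, $G[S]$ has maximum degree at most $1$: if some $v\in S$ had two neighbours $u,w\in S$, then $|N(v)\cap S|\ge 2$, contradicting that $S$ is an open packing; hence $G[S]$ is a disjoint union of a matching $M$ together with some isolated vertices. Second, any two non-adjacent vertices $u,w\in S$ satisfy $d(u,w)\ge 3$: they are at distance at least $2$, and distance exactly $2$ would produce a common neighbour $x$ with $|N(x)\cap S|\ge 2$, again a contradiction. Now I would form $B\subseteq S$ by selecting one endpoint from each edge of $M$ and adding every vertex of $S$ that is isolated in $G[S]$. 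Writing $m=|M|$ and $s$ for the number of isolated vertices, we have $|B|=m+s$ and $|S|=2m+s$, so $|B|\ge |S|/2$. It then remains to check that $B$ is a $2$-packing, that is, that distinct $b,b'\in B$ have $d(b,b')\ge 3$; by the second fact it suffices to show $b\not\sim b'$. If instead $b\sim b'$ and $b$ was chosen from an edge $bw\in M$, then $b$ would have two neighbours in $S$, namely $w$ and $b'$ — and $w\ne b'$ since $b'$ lies in a different component of $G[S]$ — contradicting the first fact; the remaining cases (one or both of $b,b'$ isolated in $G[S]$) are handled the same way. Hence $B$ is a $2$-packing and $L_1(G)=\rho(G)\ge |B|\ge |S|/2=\rho^0(G)/2$, giving $\rho^0(G)\le 2L_1(G)$.

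The only point requiring genuine care is the final verification that the transversal of $M$ together with the isolated vertices really is a $2$-packing — specifically, that two selected endpoints of distinct matching edges cannot be adjacent — but this is forced by the maximum-degree-$1$ property of $G[S]$, so I do not expect a real obstacle; the argument is short and essentially combinatorial.
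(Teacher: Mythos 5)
Your proof is correct. Note that the paper itself gives no argument for this statement: it is quoted as a known result from the reference \cite{HHS} (Haynes--Hedetniemi--Slater), so there is no in-paper proof to compare against. Your two halves are both sound and are essentially the standard argument. The lower bound is immediate, exactly as you say, since $N(v)\subseteq N[v]$ makes every $1$-limited packing (i.e.\ $2$-packing) an open packing. For the upper bound, your two structural facts about an open packing $S$ --- that $G[S]$ has maximum degree at most $1$, and that non-adjacent vertices of $S$ are at distance at least $3$ --- are correctly derived from the condition $|N(v)\cap S|\leq 1$ applied to vertices inside and outside $S$ respectively, and choosing one vertex from each component of $G[S]$ then yields a set $B$ with $|B|\geq |S|/2$ whose pairwise distances are at least $3$, hence a $2$-packing; your check that two chosen vertices cannot be adjacent (which would force a vertex of $S$ with two neighbours in $S$) closes the only delicate point. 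This gives $\rho^{0}(G)\leq 2L_1(G)$ as required, so your proposal is a complete, self-contained proof of the cited lemma.
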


Similarly, we consider the relationship between the $2$-limited
packing number and the $1$-limited packing number of graphs.

\begin{pro}\label{prop4}
For any graph $G$ with edges,
$L_1(G)+1\leq L_2(G) \leq \frac{2(\Delta(G)^2+1)}{\delta(G)+1}L_1(G).$
\end{pro}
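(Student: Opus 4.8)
The plan is to establish the two inequalities separately, since they come from quite different sources. For the lower bound $L_1(G)+1 \leq L_2(G)$, I would invoke Lemma \ref{lem5} together with Remark 2: since $G$ has at least one edge, we have $\Delta(G) \geq 1 = k$ when $k=1$, so the hypothesis $k \leq \Delta(G)$ of Lemma \ref{lem5} is satisfied, and that lemma gives $L_2(G) = L_{1+1}(G) \geq L_1(G) + 1$. The connectivity assumption in the original statement of Lemma \ref{lem5} is removed by Remark 2, so this applies to any graph with edges. This direction is essentially immediate once the correct earlier result is cited.

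For the upper bound $L_2(G) \leq \frac{2(\Delta(G)^2+1)}{\delta(G)+1} L_1(G)$, the idea is to sandwich $L_2(G)$ and $L_1(G)$ between bounds in terms of $n$, $\Delta(G)$, and $\delta(G)$. By Lemma \ref{lem10} with $k=2$, we have $L_2(G) \leq \frac{2n}{\delta(G)+1}$. By Lemma \ref{lem31}, we have $L_1(G) \geq \frac{n}{\Delta(G)^2+1}$, equivalently $n \leq (\Delta(G)^2+1) L_1(G)$. Combining these,
$$L_2(G) \leq \frac{2n}{\delta(G)+1} \leq \frac{2(\Delta(G)^2+1)}{\delta(G)+1} L_1(G),$$
which is exactly the claimed bound. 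So the upper bound is a one-line chaining of two known results as well.

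I do not anticipate a serious obstacle here; the whole proposition is a packaging of Lemmas \ref{lem5}, \ref{lem31}, and \ref{lem10}. The only point requiring a moment's care is the hypothesis "$G$ with edges": it is needed precisely so that $\Delta(G) \geq 1$, which both makes $L_2(G) \geq L_1(G)+1$ legitimate via Lemma \ref{lem5} and rules out degenerate division issues (and, implicitly, guarantees $\delta(G)$ and the fraction are well-behaved when one restricts attention to the relevant component, though since Lemmas \ref{lem31} and \ref{lem10} are stated for general graphs this is not strictly an issue). If one wanted the cleanest exposition I would first note $\Delta(G) \geq 1$, then display the lower bound from Lemma \ref{lem5}/Remark 2, then display the upper bound chain above, and conclude.
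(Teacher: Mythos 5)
Your proposal is correct and follows essentially the same route as the paper: the lower bound via Lemma \ref{lem5} with Remark 2 (using $\Delta(G)\geq 1$), and the upper bound by chaining Lemma \ref{lem10} ($L_2(G)\leq \frac{2n}{\delta(G)+1}$) with Lemma \ref{lem31} ($n\leq(\Delta(G)^2+1)L_1(G)$). No gaps.
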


\begin{proof}
The Lower bound is evidently true for $\Delta(G)\geq 1$ by Lemma \ref{lem5}
and Remark $2$. It remains to verify the upper bound. Combining
Lemma \ref{lem31} and Lemma \ref{lem10}, we have
\begin{eqnarray*}
L_1(G) &\geq& \frac{|V(G)|}{\Delta(G)^2+1}\\
&=& \frac{2|V(G)|}{\delta(G)+1}\frac{\delta(G)+1}{2(\Delta(G)^2+1)}\\
&\geq&  L_2(G)\frac{\delta(G)+1}{2(\Delta(G)^2+1)}.
\end{eqnarray*}
That is, $L_2(G) \leq \frac{2(\Delta(G)^2+1)}{\delta(G)+1}L_1(G).$
\end{proof}

With respect to trees, the above result can be further improved.
Recall that a star is a tree with diameter at most $2$.
Define a $t$-spider to be a tree obtained from a star by subdividing
$t$ of its edges once.

\begin{thm}\label{th11}
For any tree $T$, $L_1(T)+1\leq L_2(T)\leq 2L_1(T)$. Moreover,
$L_1(T)+1= L_2(T)$ if and only if $T$ is a $t$-spider with
$0\leq t<\Delta(T)$.

\end{thm}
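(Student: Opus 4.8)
The plan is to clear away the two inequalities first and then concentrate on the equality case. The lower bound $L_1(T)+1\leq L_2(T)$ is immediate from Lemma \ref{lem5} and Remark 2 applied with $k=1$ (a nontrivial tree has $\Delta(T)\geq 1$), and the upper bound is $L_2(T)\leq 2\gamma(T)=2L_1(T)$, combining Lemma \ref{lem4} (with $k=2$) and Lemma \ref{lem34}.

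For the ``if'' direction, let $T$ be a $t$-spider with $0\leq t<\Delta(T)$. If $\Delta(T)=1$ then $T=P_2$ and $L_2=2=L_1+1$ by Lemma \ref{lem3}. If $\Delta(T)\geq 2$, let $c$ be the centre, with $\Delta(T)-t\geq 1$ pendant leaves and $t$ legs $c\,m_i\,\ell_i$ of length two. I would check directly that $\gamma(T)=t+1$ (take $c$ plus one vertex of each long leg) and that $L_2(T)=t+2$: on one side every vertex other than $\ell_1,\dots,\ell_t$ lies in $N[c]$, so any $2$-limited packing $B$ satisfies $|B|\leq |B\cap N[c]|+t\leq t+2$; on the other side $\{\ell_1,\dots,\ell_t\}$ together with two members of $N[c]$ (two pendant leaves of $c$, or one pendant leaf and one $m_i$ when $c$ has a single pendant leaf) is a $2$-limited packing of size $t+2$. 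Hence $L_2(T)=L_1(T)+1$.

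For the ``only if'' direction I would prove the contrapositive: if $T$ is not a $t$-spider with $t<\Delta(T)$ then $L_2(T)\geq \gamma(T)+2$. Every tree of diameter at most $2$ is such a spider, so $diam(T)\geq 3$. When $diam(T)=3$, $T$ is a double star $S(a,b)$, which is a spider with $t<\Delta$ unless $a,b\geq 2$; in that case $\gamma(T)=2$ while two leaves at each centre form a $2$-limited packing of size $4$. When $diam(T)=4$, fix a central vertex $v$; the trees that fail to be spiders are exactly those in which some neighbour of $v$ carries at least two leaves, and the spiders outside the family are exactly those in which $v$ has no pendant leaf (so all of its $\geq 2$ legs have length two). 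In each situation I would compute $\gamma(T)$ and exhibit a $2$-limited packing of size $\gamma(T)+2$ by taking two leaves off one long branch, one leaf off each remaining long branch, and two more vertices of $N[v]$.

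The case $diam(T)\geq 5$ I would handle by induction on $|V(T)|$, the diameter-$\leq 4$ trees being the base. Fix a longest path $v_0v_1\cdots v_d$ with $d\geq 5$; every neighbour of $v_1$ other than $v_2$ is a leaf. If $v_1$ has such a leaf besides $v_0$, delete $v_0$: then $\gamma$ is unchanged and every $2$-limited packing of $T-v_0$ is still one of $T$, so either $T-v_0$ is outside the family and induction finishes it, or $T-v_0$ is a spider with $t<\Delta$ in which $v_1$ must be a degree-two interior vertex of a long leg, and then $T$ has explicit enough structure to write down a $2$-limited packing of size $\gamma(T)+2$. If instead $N(v_1)=\{v_0,v_2\}$, then $v_0v_1$ is a pendant path of length two at $v_2$; I would delete $\{v_0,v_1\}$, use that $\gamma$ drops by $0$ or $1$ and that $B\cup\{v_0\}$ is a $2$-limited packing of $T$ whenever $B$ is one of $T-\{v_0,v_1\}$, and split into two cases. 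I expect the main obstacle to be the case where $\gamma$ drops by exactly $1$ \emph{and} $T-\{v_0,v_1\}$ itself lies in the family: here the crude estimate yields only $L_2(T)\geq\gamma(T)+1$, and one must argue that the centre of that spider is not $v_2$, determine exactly where $v_2$ can sit (only as a pendant-leaf neighbour of the centre, or as a degree-two interior vertex of a long leg; the possibility that $v_2$ is the far end $\ell_i$ of a long leg is ruled out because it forces us back into the $\gamma$-unchanged case), and in each surviving configuration insert \emph{both} $v_0$ and $v_1$ (plus one more leaf) into a $2$-limited packing to recover the missing unit.
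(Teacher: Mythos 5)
Your two bounds and your ``if'' direction coincide with the paper's (Lemma \ref{lem5} with Remark 2, Lemma \ref{lem4} with Lemma \ref{lem34}, and a direct computation $L_2=t+2$, $L_1=t+1$ for a $t$-spider with $t<\Delta$ --- the paper gets the upper bounds from Corollary \ref{cor14} and Theorem \ref{th3}, you count inside $N[c]$, which is equivalent). Where you genuinely diverge is the ``only if'' direction. The paper argues directly about a tree with $L_2(T)=L_1(T)+1$: it takes a maximum $1$-limited packing $B_1$, shows by a case analysis on $|V(Q)\cap B_1|$ along a $P_6$ that $diam(T)\le 4$, then excludes a single six-vertex subtree $F$ by the same augmentation trick, concludes $T$ is a $t$-spider, and finally rules out $t=\Delta(T)$. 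You instead prove the contrapositive $L_2(T)\ge\gamma(T)+2$ by induction on the order, deleting a leaf or a pendant $P_2$ at the end of a longest path, with all diameter-$\le 4$ non-spider (or $t=\Delta$) trees as base cases. The paper's route is shorter and avoids induction, at the cost of delegating the $F$-case to ``a similar argument'' and a figure; your route is longer but makes explicit where the extra $+2$ comes from, and you have correctly isolated the only delicate point (when $\gamma$ drops by one and $T-\{v_0,v_1\}$ is itself an in-family spider); the structural facts you assert there --- $v_2$ cannot be the centre because a path of length $d-2\ge 3$ starts at it, and $v_2$ being a leg-end forces the $\gamma$-unchanged case --- do check out, and explicit packings of size $\gamma(T)+2$ exist in the two surviving positions of $v_2$. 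Two small points to tighten when writing it up: in your Case A the branch ``$T-v_0$ is a spider'' is vacuous for $diam(T)\ge 5$ (a spider has diameter at most $4$, and $v_1$ would have to be a leg-end yet has degree at least $2$ in $T-v_0$), so it can simply be dropped; and in the diameter-$4$ base case the literal recipe ``two leaves off one long branch'' must be adapted for the spider with $t=\Delta(T)$ (take both vertices of one leg) and the ``two more vertices of $N[v]$'' must be chosen so as not to overload a support vertex --- routine adjustments that do not affect the soundness of the approach.
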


\begin{proof}
The lower bound holds from $\Delta(T)\geq 1$ by Lemma \ref{lem5},
and the upper bound follows from $L_2(T)\leq 2\gamma (T)=2L_1(T)$
by Lemma \ref{lem34} and Lemma \ref{lem4}.

Next, we show that $L_1(T)+1= L_2(T)$ if and only if
$T$ is a $t$-spider with $0\leq t<\Delta(T)$. Let $T$
be a $t$-spider with
$V(T)=\{r,v_1,\ldots,v_t, w_1,\ldots,w_t, u_1,\ldots,u_s\}$
and $E(T)=\{rv_1,\ldots,rv_t\}\cup \{ru_1,\ldots,ru_s\} \cup \{v_1w_1,\ldots,v_tw_t\}$, where $s\geq1$ and $t\geq 0$.
If $t=0$, then $T$ is a star and the results follows from
$L_2(T)=2$ and $L_1(T)=1$. Now we assume that $t\geq1$.
Notice that $|V(T)|=1+2t+s$ and $\Delta(T)=t+s$.
By Corollary \ref{cor14}, we have
$L_2(T)\leq |V(T)|+1-\Delta(T)=t+2$. Observe that
$\{ v_1,u_1, w_1,\ldots,w_t\}$ is a $2$-limited packing of $T$,
it follows that $L_2(T)= t+2$. On the other side, it follows from
Theorem \ref{th3} that $L_1(T)\leq |V(T)|-\Delta(T)=t+1$. And it
is clear that $\{w_1,\ldots,w_t, u_1\}$ is a $1$-limited packing
of $T$, so $L_1(T)= t+1$. Thus, $L_1(T)+1= L_2(T)$.

It remains to show the converse. Let $T$ be a tree with
$L_1(T)+1= L_2(T)$. We first give the following claim.

\textbf{Claim 1.} $diam(T)\leq 4$.

\noindent\textbf{Proof of Claim 1:} Assume to the contrary that there
is a path $Q=v_1\cdots v_6$ of order $6$ in $T$.
Let $B_1$ be a maximum $1$-limited packing of $T$. By Lemma \ref{lem3},
it is obtained that $|V(Q)\cap B_1|\leq 2$. To have a contradiction,
we aim to find a $2$-limited packing that contains $|B_1|+2$ vertices
in $T$. Suppose that $V(Q)\cap B_1=\emptyset$. We claim that
$B_2=B_1\cup\{v_1,v_6\}$ is a $2$-limited packing of $T$.
It is clear that $|N[v_i]\cap B_2|\leq 2$ for each vertex $v_i$
on $Q$. Consider each vertex $u$ outside $Q$. First, we know
$|N[u]\cap B_1|\leq 1$ for each vertex $u$ outside $Q$. Since $T$
is a tree and has no cycle, it follows that each vertex $u$
outside $Q$ has at most one neighbour on $Q$, which means
$|N[u]\cap \{v_1,v_6\}|\leq 1$. Thus, $|N[u]\cap B_2|\leq 2$ for
each vertex $u$ outside $Q$. As a result, we obtain that
$B_2=B_1\cup\{v_1,v_6\}$ is a $2$-limited packing of $T$.
Suppose that $V(Q)\cap B_1=\{v_i\}$ for some $1\leq i\leq 6$.
If $i=1$, then $B_1\cup\{v_2,v_6\}$ is a $2$-limited packing of $T$.
It is worth mentioning that $v_2$ is not adjacent to any vertex in $B_1\setminus\{v_1\}$, otherwise $|N[v_2]\cap B_1|\geq2$, a contradiction.
If $2\leq i\leq5$, then $B_1\cup\{v_1,v_6\}$ is a $2$-limited packing of $T$.
If $i=6$, then $B_1\cup\{v_1,v_5\}$ is a $2$-limited packing of $T$.
Suppose that $V(Q)\cap B_1=\{v_i,v_j\}$ for some $1\leq i\neq j\leq 6$.
If $(i,j)\in\{(1,4),(3,6)\}$, then $ B_1\cup\{v_2,v_5\}$ is a $2$-limited
packing of $T$. If $(i,j)\in\{(1,5),(1,6),(2,5),(2,6)\}$, then
$(B_1\setminus \{v_i,v_j\})\cup\{v_1,v_2,v_5,v_6\}$ is a $2$-limited
packing of $T$. By the above argument, we have $L_2(T)\geq L_1(T)+2$,
which is a contradiction. Thus, it is obtained that $diam(T)\leq4$.

Suppose that $diam(T)\leq4$. Let $F$ be a tree with diameter $4$
and a unique vertex $f_0$ of maximum degree $3$ such that $f_0$ is
adjacent to two leaf vertices. Let $B_1$ be a maximum $1$-limited
packing of $T$. It is clear that $|V(F)\cap B_1|\leq 2$. We claim
that $T$ has no $F$ as a subtree. Suppose to the contrary that
$F$ is a subtree of $T$. By the similar argument when $T$ contains
a path $P_6$, we always find a $2$-limited packing with $L_1(T)+2$
vertices in $T$ as depicted in Fig. 2, which is a contradiction.
Thus, $T$ has no $F$ as a subtree, which implies that $T$ is a
$t$-spider with $0\leq t\leq \Delta(T)$. Notice that if $\Delta(T)=1$,
then it is clear that $t=0$. For $\Delta(T)\geq2$, we claim that
$t< \Delta(T)$. Assume to the contrary that $t=\Delta(T)\geq 2$.
Let $r$ be a vertex of maximum degree $t$ with $N(r)=\{v_1,\ldots,v_t\}$
and $N(v_i)=\{r,w_i\}$ for $1\leq i\leq t$, where $w_1,\ldots,w_t$
are $t$ leaf vertices of $T$. Observe that $\{v_1, v_2, w_1,\ldots,w_t\}$
is $2$-limited packing of $T$, it follows that $L_2(T)\geq t+2$.
On the other hand, $\{r, v_i, w_i\}$ has at most one vertex in a
$1$-limited packing of $T$ for each $1\leq i\leq t$, it follows that
$L_1(T)\leq t$. Thus, $L_2(T)\geq L_1(T)+2$, which is a contradiction.
As a result, $T$ is a $t$-spider with $0\leq t< \Delta(T)$.

\begin{figure}[!htbp]
\centering
\includegraphics[width=0.6\textwidth]{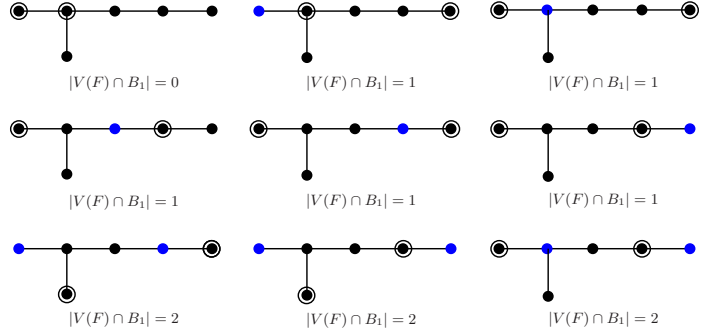}
\caption{The three cases can arise on the number of
$|V(F)\cap B_1|$. In each case, the blue points correspond
to the vertices in $B_1$, the black points correspond to
the vertices outside $B_1$, and the circled black points correspond
to the vertices outside $B_1$ that will be added into $B_2$.}
\end{figure}

\end{proof}

\noindent\textbf{Remark 5.}
By the proof of Theorem \ref{th11}, we know that $L_2(T)= 2L_1(T)$
if and only if $L_2(T)= 2\gamma (T)$. And all the trees $T$ with
$L_2(T)= 2\gamma (T)$ are characterized in \cite{GGHR}.

\vskip 0.3cm

Similarly, we compare the $2$-limited packing number with the open packing
number of trees. We first define a class of trees, which is needed in the
following theorem. Let $\mathcal{T}$ be the set of trees $T$ whose vertex
set can be partitioned into two disjoint subsets $S_0$ and $R_0$, satisfying
the following properties:

$(i)$ $T[S_0]=aK_2$, and each copy of $K_2$ has at least one vertex
with degree $1$ in $T$, where $a$ is an positive integer,

$(ii)$ for each $r\in R_0$, $|N(r)\cap S_0|=1$.

\begin{thm}\label{th12}
For any tree $T$, $\rho^{0}(T)\leq L_2(T)\leq2\rho^{0}(T)$.
Moreover, $\rho^{0}(T)= L_2(T)$ if and only if
$T\in \mathcal{T}$.
\end{thm}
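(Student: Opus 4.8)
The plan is to establish the two inequalities first and then characterize equality in the lower bound. For the lower bound $\rho^{0}(T)\leq L_2(T)$: I would take a maximum open packing $S$ of $T$ and verify directly that $S$ is a $2$-limited packing. Indeed, for any vertex $v$, we have $|N[v]\cap S| = |N(v)\cap S| + |\{v\}\cap S|$; since $|N(v)\cap S|\leq 1$ by definition of open packing, this is at most $2$, so $S$ is a $2$-limited packing and $L_2(T)\geq |S| = \rho^{0}(T)$. (This argument works for any graph, not just trees.) For the upper bound $L_2(T)\leq 2\rho^{0}(T)$: by Lemma \ref{lem33} we have $\rho^{0}(T) = \gamma_t(T)$, and since a total dominating set is in particular a dominating set, $\gamma_t(T)\geq \gamma(T) = L_1(T)$ by Lemma \ref{lem34}. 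Combining with the already-proved bound $L_2(T)\leq 2L_1(T)$ from Theorem \ref{th11} gives $L_2(T)\leq 2L_1(T)\leq 2\gamma_t(T) = 2\rho^{0}(T)$.

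For the equality characterization, I would argue both directions. For the ``if'' direction, suppose $T\in\mathcal{T}$ with the partition $V(T)=S_0\cup R_0$ where $T[S_0]=aK_2$ and each copy has a leaf of $T$, and each $r\in R_0$ has exactly one neighbour in $S_0$. First I claim $S_0$ is an open packing: for $v\in S_0$, its unique $S_0$-neighbour is its partner in the $K_2$, so $|N(v)\cap S_0|= 1$ (the leaf endpoint of each $K_2$ has exactly one neighbour total, which lies in $S_0$); for $v\in R_0$, $|N(v)\cap S_0| = 1$ by property (ii). Hence $\rho^{0}(T)\geq 2a = |S_0|$. Then I need $L_2(T)\leq 2a$: I would show $S_0$ is in fact a maximum $2$-limited packing, or more carefully bound any $2$-limited packing $B$ by noting that each $K_2$-component together with its ``private'' attachment structure forces $|B\cap(\text{that component's neighbourhood})|\le 2$; combined with the structure that every $r\in R_0$ attaches to exactly one $S_0$-component, a counting/closed-neighbourhood argument at the non-leaf endpoints of the $K_2$'s should give $L_2(T)\leq 2a = \rho^{0}(T)$, forcing equality.

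For the ``only if'' direction — which I expect to be the main obstacle — suppose $\rho^{0}(T) = L_2(T)$. Since $\rho^{0}(T) = \gamma_t(T)$ and $L_2(T)\geq L_1(T)+1 = \gamma(T)+1 > \gamma(T)$, the hypothesis forces $\gamma_t(T) > \gamma(T)$, so in particular $T$ is not a star. I would take a maximum open packing $S$ (which by the lower-bound argument is also a maximum $2$-limited packing) and analyze the structure of $T[S]$ and of $V(T)\setminus S$. Since $S$ is an open packing, each component of $T[S]$ has every vertex with at most one neighbour inside $S$, so $T[S]$ is a disjoint union of edges and isolated vertices; I would rule out isolated vertices in $T[S]$ (an isolated vertex $x\in S$ could be ``doubled'' using a pendant-type move to enlarge the $2$-limited packing beyond $|S|$, contradicting maximality of $L_2(T)=|S|$ — this is where the tree structure, no cycles, is used, echoing the $P_6$/$F$-exclusion arguments in Theorem \ref{th11}). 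Thus $T[S]=aK_2$. Then I would show each $K_2$ has a leaf endpoint: if both endpoints of some $K_2$ had neighbours outside $S$, a swap/augmentation argument would again contradict maximality. Finally, for $r\notin S$, $|N(r)\cap S|\leq 1$ since $S$ is an open packing, and $|N(r)\cap S|\geq 1$ because $S$ is a (total) dominating set's worth of vertices — more precisely, if some $r$ had no neighbour in $S$ then $S\cup\{r\}$ with one more suitably chosen vertex would be a larger $2$-limited packing; hence $|N(r)\cap S| = 1$, giving properties (i) and (ii) with $S_0 = S$, $R_0 = V(T)\setminus S$, so $T\in\mathcal{T}$. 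The delicate points throughout are the augmentation moves: each must produce a genuine $2$-limited packing strictly larger than $|S|$, and verifying the closed-neighbourhood constraint at every vertex after the move is where care is needed, but acyclicity makes each such check local.
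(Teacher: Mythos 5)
Your proposal follows essentially the same route as the paper: the same verification that any open packing is a $2$-limited packing, the same chain $L_2(T)\le 2\gamma(T)\le 2\gamma_t(T)=2\rho^{0}(T)$ for the upper bound, the same star-partition counting at the non-leaf endpoints of the $K_2$'s for the ``if'' direction, and the same three augmentation claims for the ``only if'' direction (no isolated vertex in $T[S]$, a degree-one endpoint in each $K_2$, and exactly one $S$-neighbour for each vertex outside $S$), so the approach is correct and matches the paper's proof. One unused aside is wrong and should be deleted: $\gamma_t(T)>\gamma(T)$ does not imply $T$ is not a star, and in fact stars lie in $\mathcal{T}$ and do satisfy $\rho^{0}(T)=L_2(T)$; removing that remark (and noting that acyclicity is really needed in the leaf-endpoint swap rather than in excluding isolated vertices of $T[S]$) leaves your argument intact.
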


\begin{proof}
For the lower bound, we give the stronger result that
$ L_2(G)\geq \rho^{0}(G)$ for any graph $G$. Let $S$ be an
open packing of $G$ with $|S|=\rho^{0}(G)$. Then
$|N(v)\cap S|\leq 1$ for each vertex $v$ of $G$. Obviously,
$|N[v]\cap S|\leq 2$ for each vertex $v$ of $G$. It is obtained
that $S$ is a $2$-limited packing of $G$, therefore
$\rho^{0}(G)\leq L_2(G)$. On the other hand, since
$\gamma(T)\leq \gamma_t(T)$, it follows that
$L_2(T)\leq2\gamma(T)\leq 2\gamma_t(T)=2\rho^{0}(T)$
by Lemma \ref{lem33} and Lemma \ref{lem4}.

Next, we show that $\rho^{0}(T)= L_2(T)$ if and only if
$T\in \mathcal{T}$. Let $T$ be a tree in $\mathcal{T}$.
If $T$ has only two vertices, then $T=K_2$, and hence the
result trivially holds. Now we assume that $|V(T)|\geq 3$.
Observe that $S_0$ is an open packing of $T$, if follows that
$\rho^{0}(T)\geq 2a$. By the definition of $T$, it is obtained
that $V(T)$ can be partitioned into $V_1\cup \cdots\cup V_a$
such that $G[V_i]$ is a star for each $1\leq i\leq a$.
Notice that $V_i$ has at most two vertices in a $2$-limited
packing of $T$ for each $1\leq i\leq a$, then $L_2(T)\leq 2a$.
Since $\rho^{0}(T)\leq L_2(T)$, it follows that
$\rho^{0}(T)= L_2(T)$ for each tree $T$ in $\mathcal{T}$.

Conversely, suppose that $\rho^0(T)= L_2(T)$. Let $S$ be a maximum
open packing of $T$. It is known that each component of $T[S]$ is
$K_1$ or $K_2$. To show $T\in \mathcal{T}$, we give the following
claims.

\textbf{Claim 1.} $T[S]=tK_2$.

\noindent\textbf{Proof of Claim 1:} Suppose to the contrary
that there is at least one isolated vertex, say $v$, in $T[S]$.
Since $T$ is connected, it follows that $v$ has a neighbour,
say $r$, in $\overline {S}$. Let $B=S\cup\{r\}$. Next we show
that $B$ is a $2$-limited packing of $T$. Since $r$ is not
adjacent to any vertex in $S\setminus\{v\}$, it follows that for
each vertex $v$ in $B$, $|N[v]\cap B|\leq 2$. On the other hand,
for each vertex $u$ in $\overline{B}$, we have $|N[u]\cap S|\leq 1$,
and hence $|N[u]\cap B|\leq 2$. Thus, $B$ is a $2$-limited packing
of $T$, and $L_2(T)\geq |B|=|S|+1$, which is a contradiction. Thus,
there is no isolated vertex in $T[S]$, so $T[S]=tK_2$.

\textbf{Claim 2.} For each $r\in \overline{S}$, $|N(r)\cap S|=1$.

\noindent\textbf{Proof of Claim 2:} By the definition of the open
packing, we know $|N(r)\cap S|\leq 1$ for any vertex $r\in \overline{S}$.
To show this claim, it remains to prove $|N(r)\cap S|\geq 1$ for any vertex
$r\in \overline{S}$. Assume that there is a vertex $r_0\in \overline{S}$
such that $N(r_0)\cap S=\emptyset$. Then $S\cup \{r_0\}$ is a $2$-limited
packing of $T$, so $L_2(T)\geq \rho^{0}(T)+1$, a contradiction. Hence,
we have $|N(r)\cap S|=1$ for each $r\in \overline{S}$.

\textbf{Claim 3.} Each component of $T[S]$ has at least one vertex
with degree $1$ in $T$.

\noindent\textbf{Proof of Claim 3:} Suppose that $T[S]$ has one component $K_2=v_1v_2$, where $d(v_i)\geq2$ for $i=1,2$. It is obtained that there
is a path $P=uv_1v_2w$ in $T$, where $u,w\in \overline{S}$. Notice that
each vertex on $P$ has the property that its neighbours outside $P$ are
not contained in $S$, otherwise there is a vertex on $P$ such that it has
at least two neighbours in $S$, which is a contradiction. It is obtained that
$(S\setminus \{v_2\})\cup\{u,w\}$ is a $2$-limited packing of $T$,
which means $L_2(T)\geq \rho^0(T)+1$, which is a contradiction. Thus,
each component of $T[S]$ has at least one vertex with degree $1$ in $T$.

By the above claims, we get that if $\rho^0(T)= L_2(T)$, then
$T\in \mathcal{T}$ with $S=S_0$, this completes the proof.
\end{proof}

Graphs with $\rho^{0}(G)=1$, graphs with $L_k(G)=k$ for $k=1,2$ are
characterized in Lemma \ref{lem15} and Theorem \ref{th1}, respectively.
So we assume that $a\geq 2$ in the following theorem.

\begin{thm}\label{th8}
For each pair of integers $a$ and $b$ with $a\geq 2$ and
$a+1\leq b\leq 2a$, there exists a tree $T$ such that
$\rho^{0}(T)=L_1(T)=a$ and $L_2(T)=b$.
\end{thm}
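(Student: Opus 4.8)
The plan is to construct, for each admissible pair $(a,b)$, an explicit tree $T$ in which both the open packing number and the $1$-limited packing number equal $a$, while the $2$-limited packing number equals $b$. By Lemma \ref{lem33} and Lemma \ref{lem34}, for trees we have $\rho^0(T)=\gamma_t(T)$ and $L_1(T)=\gamma(T)$, so the first task amounts to engineering a tree with $\gamma(T)=\gamma_t(T)=a$; the simplest building block achieving $\gamma=\gamma_t=1$ is a path on three vertices (a "cherry"), and taking $a$ disjoint cherries and joining them appropriately into a tree should give $\gamma=\gamma_t=a$ while keeping full control of the structure. The idea is then to tune how many of these cherries are "doubled" (i.e. have an extra pendant leaf at the center, turning the cherry into a star $K_{1,3}$-type gadget) so that a $2$-limited packing can pick up two vertices from the doubled gadgets but only one (or a controlled number) from the others; since $b$ ranges over $a+1,\dots,2a$, I would make exactly $b-a$ of the gadgets "doubled."

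Concretely, first I would describe the tree: take vertices $c_1,\dots,c_a$ (the centers), attach to each $c_i$ two leaves $x_i,y_i$, and additionally attach to $c_i$ a third leaf $z_i$ for $1\le i\le b-a$; finally, link the centers into a tree by adding a path (or a star) on $\{c_1,\dots,c_a\}$, say edges $c_ic_{i+1}$ for $1\le i\le a-1$. Second, I would verify $\gamma(T)=a$: the centers form a dominating set of size $a$, and no smaller set dominates since the pairs $\{x_i,y_i\}$ force any dominating set to hit $N[x_i]\cap N[y_i]=\{c_i\}$ or include one of $x_i,y_i$ per index — a standard pendant-vertex argument gives the lower bound $a$. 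Third, $\gamma_t(T)=a$: the center path is a total dominating set of size $a$ (each $c_i$ has a neighbour among the centers), and the lower bound $\gamma_t\ge\gamma=a$ finishes it; hence $\rho^0(T)=a$ by Lemma \ref{lem33}. Fourth, for $L_2(T)$: the set $B=\{x_i,y_i:1\le i\le b-a\}\cup\{x_i:b-a<i\le a\}$ has $2(b-a)+(a-(b-a))=b$ vertices and one checks $|N[v]\cap B|\le 2$ for every $v$ (the only vertex seeing two elements of $B$ is a center $c_i$ with $i\le b-a$, which sees exactly $x_i,y_i$); this gives $L_2(T)\ge b$. For the matching upper bound I would partition $V(T)$ into the $a$ closed neighbourhoods $N[c_i]$ (taking the center-path edges into account, one instead partitions into the vertex sets $\{c_i,x_i,y_i\}$ or $\{c_i,x_i,y_i,z_i\}$, which are disjoint and cover $V(T)$): a $2$-limited packing meets each such part in at most $2$ vertices, and I would argue that the parts without a $z_i$ (of which there are $a-(b-a)=2a-b$) can contain at most $1$ vertex of any $2$-limited packing — because if both $x_i$ and $y_i$ lay in $B$ then $c_i$ already has two, forbidding $c_i$, fine, but the constraint propagates through the center path edge to limit neighbours. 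This bookkeeping yields $L_2(T)\le 2(b-a)+1\cdot(2a-b)=b$, so $L_2(T)=b$.

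The main obstacle I anticipate is the $L_2$ upper bound: the naive partition into $\{c_i,x_i,y_i\}$ only gives $L_2(T)\le 2a$, not $b$, so I need a genuinely sharper argument showing that an undoubled gadget $\{c_i,x_i,y_i\}$ contributes at most one vertex. This is false in isolation ($P_3$ has $L_2=2$), so it must exploit the interaction along the center path — specifically, if a $2$-limited packing $B$ contained two of $\{c_i,x_i,y_i\}$ it would have to be $\{x_i,y_i\}$ (since $c_i$ with a leaf already saturates $N[c_i]$... wait, $|N[c_i]\cap B|\le 2$ allows $\{c_i,x_i\}$ too), so the real argument is subtler and I would likely need to reorganize: attach each undoubled cherry so that its center is forced to carry load from the path, or equivalently choose the connecting structure (perhaps a central "hub" vertex $h$ adjacent to all $c_i$, with $h$ itself constrained) so that $|N[h]\cap B|\le 2$ caps the total contribution of the $c_i$'s and forces the undoubled gadgets down to one each. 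I would finalize the construction only after checking this counting closes exactly to $b$; the rest (the domination and total-domination computations, and exhibiting the extremal $2$-limited packing) is routine pendant-vertex reasoning.
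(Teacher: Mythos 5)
There is a genuine gap, and it is exactly where you suspected: the $L_2$ upper bound for your tree fails, and in fact your construction cannot realize any $b<2a$. In your tree (centers $c_1,\dots,c_a$ on a path, each with two leaves $x_i,y_i$, some with a third leaf $z_i$), the set $B=\{x_i,y_i:1\le i\le a\}$ is always a $2$-limited packing: every leaf sees only itself in $B$, and every center $c_i$ sees exactly $x_i,y_i$, i.e.\ $|N[c_i]\cap B|=2$, which is allowed; the path edges between centers contribute nothing since no center is in $B$. Hence $L_2(T)\ge 2a$ for every choice of the number of ``doubled'' gadgets (and $\le 2a$ by the partition into closed stars), so $L_2(T)=2a$ regardless, and the claim that an undoubled gadget $\{c_i,x_i,y_i\}$ contributes at most one vertex is simply false --- no interaction along the center path rescues it, and the ``hub'' variant you float at the end fails for the same reason as long as each center keeps two leaf neighbours. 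Your construction therefore only settles the single value $b=2a$ (where it is essentially the paper's Case~1 caterpillar of $P_3$'s joined along their centers).

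The missing idea is that to push $L_2$ below $2a$ you must ration the leaves per branch, not just add extra ones, and you need a global upper-bound tool rather than gadget-by-gadget counting. The paper's construction for $a+1\le b\le 2a-1$ (write $b=a+r$) takes a star $K_{1,a}$ with center $v_0$ and leaves $v_1,\dots,v_a$, attaches two pendant leaves to $v_1,\dots,v_{r-1}$, one pendant leaf to $v_r,\dots,v_{a-1}$, and none to $v_a$. Then $n=2a+r-1$ and $\Delta(T)=a$, so Corollary~\ref{cor14} gives $L_2(T)\le n+1-\Delta(T)=a+r=b$ in one stroke, while an explicit packing ($\{v_{a-1},v_a\}$ together with all pendant leaves $w_i,w_j'$) attains it; $\rho^0(T)=L_1(T)=a$ is then verified directly (via $L_1\le\rho^0$ and a short case analysis of a maximum open packing, or via $\gamma,\gamma_t$ as you intended). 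So your reduction of $\rho^0$ and $L_1$ to $\gamma_t$ and $\gamma$ is fine, but the heart of the theorem --- hitting every intermediate value of $L_2$ --- requires a different tree, and your proposal does not yet contain a construction or an argument that achieves it.
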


\begin{proof}
Suppose $a$ and $b$ are two positive integers with $a+1\leq b\leq 2a$.
Let $b=a+r$ with $1\leq r\leq a$. To construct a tree $T$ with
$\rho^{0}(T)=L_1(T)=a$ and $L_2(T)=a+r$ for $a\geq2$ and $1\leq r\leq a$,
we distinguish the following two cases.

\textbf{Case 1.} $a=r$.

Suppose that $Q_i=x_iy_iz_i$ is a path of order $3$ for
$1\leq i\leq a$. Let $T$ be the tree obtained from
$Q_1\cup\cdots \cup Q_a$ by adding the edge $y_iy_{i+1}$ for
$1 \leq i\leq a-1$. First, we show that $L_2(T)=a$. Since
each $V(Q_i)$ has at most two vertices in a $2$-limited packing
of $T$ for $1\leq i\leq a$, we have $L_2(T)\leq 2a$. It is observed
that $\{x_1, \cdots, x_a, z_1,\cdots,z_a\}$ is a $2$-limited packing
of $T$, thus $L_2(T)=2a$. Next, we show that $\rho^{0}(T)=L_1(T)=a$.
Note that $L_1(T)\leq \rho^{0}(T)$ by Lemma \ref{lem27}, then it
is sufficient to show that $L_1(T)\geq a$ and $\rho^{0}(T)\leq a$.
Obviously, $\{x_1, \cdots, x_a\}$ is a $1$-limited packing
of $T$, thus $L_1(T)\geq a$. It remains to show that $\rho^{0}(T)\leq a$.
It is clear that $\{y_1, \cdots, y_a\}$ is a total dominating set of $T$,
which implies that $\gamma_t(T)\leq a$. By Lemma \ref{lem33}, we have $\rho^{0}(T)=\gamma_t(T)\leq a$.

\textbf{Case 2.} $1\leq r\leq a-1$.

Consider a star $A=K_{1,a}$ with $V(A)=\{v_0,v_1,\ldots,v_a\}$ and
$d(v_0)=a$. Let $T$ be the tree obtained from $A$ by adding two pendent
edges $v_iw_i$ and $v_iw_i^{'}$ to each $v_i$ of $A$ for $1\leq i\leq r-1$,
and one pendent edge $v_iw_i$ at each $v_i$ of $A$ for $r\leq i\leq a-1$.
Fig. $3$ gives an example for $a=8$, $b=12$.

\begin{figure}[htbp]
\centering
\includegraphics[width=0.3\textwidth]{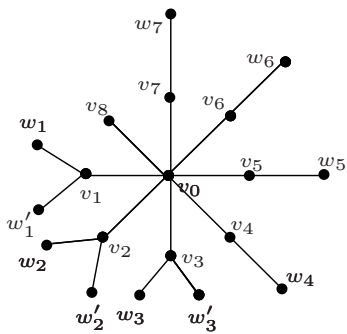}
\caption{A graph with $\rho^{0}(T)=L_1(T)=8$ and $L_2(T)=12$}
\end{figure}

To obtain that $L_1(T)=\rho^{0}(T)=a$, it suffices to prove that
$L_1(T)\geq a$ and $\rho^{0}(T)\leq a$ by Lemma \ref{lem27}.
Obviously, $\{w_1,\ldots,w_{a-1}, v_a\}$ is a $1$-limited
packing of $T$, so $L_1(T)\geq a$.
On the other hand, let $S$ be a maximum open packing of $T$ with
$|S|=\rho^{0}(T)$. It only need to show $|S|\leq a$.
Suppose that $v_0\in S$. It follows that
$\{v_i: 1\leq i\leq a\}$ has at most one vertex in $S$, and
$\{w_i,w_j': 1\leq i\leq a-1, 1\leq j\leq r-1\}$ has no vertex in $S$.
It is obtained that $S=\{v_0, v_i\}$ for some $i$ with $1\leq i\leq a$,
and hence $|S|=2\leq a$. Suppose that $v_0 \notin S$. If $v_a\in S$, then
$\{v_i: 1\leq i\leq a-1\}$ has no vertex in $S$ and $\{w_i,w_i'\}$
has at most one vertex in $S$ for each $1\leq i\leq a-1$, and hence
$|S|\leq a$. If $v_a\notin S$, then both $\{v_i: 1\leq i\leq a-1\}$ and
$\{w_i,w_i'\}$ for each $1\leq i\leq a-1$ have at most one vertex in $S$, so
$|S|\leq a$.

It remains to show that $L_2(T)= a+r$ with $1\leq r\leq a-1$.
Note that $T$ has $2a+r-1$ vertices and $\Delta(T)=a$.
By Corollary \ref{cor14}, we have $L_2(T)\leq |V(T)|+1-\Delta(T)= a+r$.
Observe that $\{v_a,v_{a-1}\}\cup\{w_i,w_j': 1\leq i\leq a-1, 1\leq j\leq r-1\}$
is a $2$-limited packing of $T$, it follows that $L_2(T)= a+r$.
\end{proof}

\end{document}